\newcommand{\Title}{Title}
\numberwithin{equation}{section}
\theoremstyle{definition}\newtheorem{definition}{Definition}[section]
\newtheorem{defititle}[definition]{\Title}
\newtheorem{remark}[definition]{Remark}
\newtheorem{ep}[definition]{Example}
\newtheorem{eps}[definition]{Examples}}
\newtheorem{prop}[definition]{Proposition}
\newtheorem{proposition-definition}[definition]{Proposition-Definition}
\newtheorem{lemma}[definition]{Lemma}
\newtheorem{thm}[definition]{Theorem}
\newtheorem{cor}[definition]{Corollary}
\newcommand{\cF}{\mathcal{F}}
\newcommand{\cI}{\mathcal{I}}
\newcommand{\Q}{\mathbb{Q}}
\newcommand{\cU}{\mathcal{U}}
\newcommand{\ie}{{\it i.e.}\/ }
\newcommand{\eg}{{\it e.g.}\/ }
\newcommand{\cf}{{\it cf.}\/ }
\newcommand{\vX}{\mathfrak{X}}
\def\gpd{\,\lower1pt\hbox{$\longrightarrow$}\hskip-.24in\raise2pt
             \hbox{$\longrightarrow$}\,}
\renewcommand{\latticebody}{\drop@{ }}
\newcommand{\N}{\ensuremath{\mathbb N}}
\newcommand{\Z}{\ensuremath{\mathbb Z}}
\newcommand{\R}{\ensuremath{\mathbb R}}
\newcommand{\T}{\ensuremath{\mathbb{T}}}
\newcommand{\g}{\ensuremath{\mathfrak{g}}}
\newcommand{\cN}{\mathcal{N}}
\newcommand{\bt}{\mathbf{t}}                  
\newcommand{\bs}{\mathbf{s}}                  
\def\act{\mathbin{\hbox{$<\kern-.4em\mapstochar\kern.4em$}}}
\def\ract{\mathbin{\hbox{$\mapstochar\kern-.3em>$}}}
\def\PB(#1,#2,#3,#4){\left\{\begin{matrix}#1&\!\!\!\stackrel{?}{\longrightarrow}&\!\!\!#2\\
\downarrow&&\!\!\!\downarrow\\
#3&\!\!\!\stackrel{?}{\longrightarrow}&\!\!\!#4\end{matrix}\right\}}
\def\pb(#1,#2,#3,#4){ \hom(#1 \to #3, #2 \to #4)}
\newcommand{\p}[1]{\partial #1} 
\begin{document}

\begin{center}
{\Large\bf  Smoothness of holonomy covers for singular foliations and essential isotropy
\footnote{AMS subject classification: 57R30 ~ Secondary 58H05, 57R55. Keywords: foliations, singularities, isotropy.} 
\bigskip

{\sc by Iakovos Androulidakis and Marco Zambon}
}

\end{center}

{\footnotesize
National and Kapodistrian University of Athens
\vskip -4pt Department of Mathematics
\vskip -4pt Panepistimiopolis
\vskip -4pt GR-15784 Athens, Greece
\vskip -4pt e-mail: \texttt{iandroul@math.uoa.gr}
  
\vskip 2pt Universidad Aut\'onoma de Madrid
\vskip-4pt Departamento de Matematicas 
\vskip-4pt and ICMAT(CSIC-UAM-UC3M-UCM)
\vskip-4pt Campus de Cantoblanco, 28049 - Madrid, Spain
\vskip-4pt e-mail: \texttt{marco.zambon@uam.es, marco.zambon@icmat.es}
}
\bigskip
\everymath={\displaystyle}

\date{today}

\begin{abstract}\noindent
Given a singular foliation, we attach an ``essential isotropy'' group to each of its leaves, and show that its discreteness is the integrability obstruction of a natural Lie algebroid over the leaf. We show that a condition ensuring discreteness is the local surjectivity of a transversal exponential map associated with the maximal ideal of vector fields prescribed to be tangent to the foliation.

 The essential isotropy group is also shown to control the smoothness of the holonomy cover of the leaf (the associated fiber of the holonomy groupoid), as well as the {smoothness} of the associated isotropy group. Namely, the (topological) closeness of the essential isotropy group is a necessary and sufficient condition for the holonomy cover to be a smooth (finite-dimensional) manifold and the isotropy group to be a Lie group.

These results are useful towards understanding the normal form of a singular foliation around a compact leaf. At the end of this article we briefly outline work of ours on this normal form, to be presented in a subsequent paper. \end{abstract}



\setcounter{tocdepth}{2} 
\tableofcontents

\section*{Introduction}
\addcontentsline{toc}{section}{Introduction}

\noindent\textbf{Background and motivations. }

An important tool for the study of regular foliations is the notion of holonomy. In particular, the holonomy groupoid of a regular foliation is the starting point for many constructions in foliation theory. In this paper we will consider the much larger class of \emph{singular} foliations, by which we mean a choice  of $C^{\infty}(M)$-module $\cF$ of the vector fields of $M$, which is locally finitely generated and stable by the Lie bracket\footnote{{This definition arises naturally from the work of Stefan \cite{Stefan} and Sussmann \cite{Sussmann}. Note that an old question is whether every singular foliation arises from a Lie algebroid. In Lemma \ref{noal} we give a counterexample in terms of a singular foliation as a module of vector fields.}}. 
 Recall that the holonomy of singular foliations was first studied by Dazord in \cite{Dazord:1984}, \cite{Dazord:1985}. 
For a particular case of ``quasi-regular'' foliations Debord \cite{Debord1} \cite{Debord2} constructed the holonomy groupoid and showed that it is smooth. 

A different construction of the holonomy groupoid for \textit{any} singular foliation was given by the first author and Skandalis in \cite{AndrSk}. It realizes the groupoid  as a quotient of certain (smooth) covers of its open sets (``bi-submersions''). It is a topological groupoid, and its topology is usually quite ``bad'', reflecting the fact that it arises from a singular foliation.  The key feature of this construction, distinguishing it from the previous ones, is that the holonomy covers of a leaf that it induces keep track of the dynamics of the foliation, namely the choice of module $\cF$, rather than the partition of $M$ to leaves (note that in the singular case it is possible for two different modules of vector fields as above to induce the same partition to leaves). The construction of \cite{AndrSk} was motivated by and allows the generalization of Noncommutative Geometry results to the singular case: In \cite{AndrSk1} a longitudinal pseudodifferential calculus was developed, and in \cite{AndrSk2} its associated index theory was given.

In the current article, motivated by questions on the topology of a singular foliation, we prove a few smoothness results about the holonomy groupoid. The main topological invariant we would like to understand eventually is the stability of the foliation around a compact leaf, and its relation with the dynamics of the foliation. Namely to generalize the \emph{local Reeb stability theorem} (see for instance \cite{MM}) to the singular case in a way that keeps track of the choice of module $\cF$.  At the end of this article we outline the relation of the smoothness of the holonomy cover with stability around a compact leaf in the previous sense, as well as work of ours on the latter, which is presented in a separate paper \cite{AZ2}. 

Other topological invariants we would like to eventually understand for singular foliations, and their relation to the smoothness are:
1) Characteristic classes; in the regular case the crucial ingredient for the formulation of characteristic classes is the action of the holonomy groupoid on the normal bundle of the foliation (see \cite{CM}). In the singular case this action cannot even be written down, as neither the holonomy groupoid nor the normal bundle are smooth. The development of characteristic classes will allow calculations for the longitudinal analytic index.
2) In Noncommutative Geometry, having the smoothness of the holonomy covers simplifies a great deal the pseudodifferential calculus and index theory developed in \cite{AndrSk1} and \cite{AndrSk2}, as it allows to define the left-regular representations (although, as it was shown in these papers, the existence of left-regular representations is not necessary for the theory to work). 

\vspace{2mm}
\noindent\textbf{Results. }

 The article at hand is the first part of our effort to understand the previous questions on the topology of  singular foliations. 
We determine conditions for the smoothness of the 
 holonomy cover of the  leaf $L_x$ at a point $x$, which is just  the source-fiber $H_x$ of the holonomy groupoid constructed in \cite{AndrSk}, and of the restriction $H_{L_x}$ of the holonomy groupoid to $L_x$.
This issue was discussed briefly already in \cite[Rem. 3.13]{AndrSk}, where it was suggested that the smoothness is related with the integrability obstruction of a certain Lie algebroid attached to the leaf.  More precisely, we show:
\begin{itemize}
\item The smoothness of $H_{L_x}$ and $H_x$ is controlled by a certain ``essential isotropy'' group attached to the leaf $L_x$ . Precisely,  $H_{L_x}$ and $H_x$ are smooth if and only if the essential isotropy group of $L_x$ is closed (Thm. \ref{HL} and Prop. \ref{source}). 
This is always satisfied for regular leaves.
\item There is a canonical transitive Lie algebroid $A_{L_x}$ attached to the leaf $L_x$. It integrates to $H_{L_x}$ if and only if the essential isotropy group is discrete (Thm. \ref{ALintegr}).
\end{itemize}
 
 {Recall that in \cite[\S 3.3]{AndrSk} a question was posed regarding the relation between the smoothness of $H_x$ and the Crainic-Fernandes obstruction to the integrability of $A_{L_x}$ \cite{CrF}. This relation should be codified in some relation between our essential isotropy group and the monodromy group of $A_{L_x}$. The full clarification of such a relation is an issue we hope to clarify in a different paper. In Remark \ref{MRui}  we examine the case when the essential isotropy is discrete (then it is automatically central): It turns out that then the monodromy group lies inside our essential isotropy group.}

Although it seems, from studying examples, that the essential isotropy groups may well always be discrete, we could not provide a concrete proof for this. However, we are able to present a condition which implies the discreteness of the essential isotropy groups.
Choose a point $x \in M$ , a transversal $S_x$ to the leaf $L_x$, and consider the    infinite-dimensional Lie algebra  $I_x \cF_{S_x}$, where $\cF_{S_x}$ is the restriction of $\cF$ to the transversal and $I_x$ is the collection of functions on $S_x$ which vanish at $x$. Then (Thm. \ref{thm:secord}):
\begin{itemize}
\item 
Assume that  for any smooth time-dependent vector field  $\{X_t\}_{t\in [0,1]}$ in $I_x\cF_{S_x}$, there exists a vector field $Z\in I_x\cF_{S_x}$ and 
a neighborhood $S'$ of $x$ in $S_x$ such that $exp(Z)|_{S'}=\phi|_{S'}$, where $\phi$ denotes the time-1 flow of $\{X_t\}_{t\in [0,1]}$.
Then the essential isotropy group of $L_x$  is discrete. 
\end{itemize} 
(This condition can interpreted in terms of local surjectivity of the exponential map of $I_x \cF_{S_x}$.)

The Lie algebra $I_x \cF_{S_x}$ turns out to be a crucial ingredient in the understanding of the transversal action of the holonomy groupoid. This action, together with  the smoothness results achieved in this note,  provides the model for a generalization of the local Reeb stability theorem to singular foliations.  This paper ends with an outline of these results, which will appear in \cite{AZ2}.  

\noindent\textbf{Organization of the paper. }In \S \ref{section:foliations}, \S \ref{section:holgpd} we recall the construction of the holonomy groupoid from \cite{AndrSk} and prove certain technical results that we will need later in the sequel. We also discuss  bi-submersions somewhat more deeply, showing that they provide an equivalent definition of a foliation. \S \ref{section:isotropy} introduces the essential isotropy groups and proves that $H_x$ is smooth iff the associated essential isotropy group is closed. In \S \ref{section:integr} we study the transitive Lie algebroid $A_L$ associated to a leaf $L$ and prove that the discreteness of the essential isotropy group attached to $L$ is the integrability obstruction of $A_L$. In the same section we discuss the condition on $I_x \cF_{S_x}$   mentioned above. In \S \ref{section:outlook} we outline the results of the forthcoming paper \cite{AZ2}.

\noindent\textbf{Notation.} Given a manifold $M$, we  use $\vX(M)$ to denote its vector fields, and  $\vX_c(M)$ its vector fields with compact support. For a vector field $X$ and $x\in M$, we use $exp_x(X)\in M$ to denote the time-one flow of $X$ applied to $x$. By $\cF$ we will always denote a singular foliation on $M$ and $L$ a leaf. If $X \in \cF$, we use $[X]$ to denote the class $X \text{ mod }I_x\cF$ (here $x\in M$). The notation $\langle X \rangle$ is used to denote either classes under several other equivalence relations or  the foliation generated by $X$.
{Given a vector bundle $E\to M$, we denote by $C^{\infty}(M;E)$ its space of smooth sections.}\\
The holonomy groupoid of a singular foliation is denote by $H$. Further, $H_x=\bs^{-1}(x)$ is the source fiber  and $H_x^x=\bs^{-1}(x)\cap \bt^{-1}(x)$ the isotropy group at $x$; the same notation applies to bi-submersions  $U,W,...$

\noindent\textbf{Acknowledgements.} We would like to thank Marius Crainic and his group, Rui Fernandes, Camille Laurent, Kirill Mackenzie, Ivan Struchiner and especially Georges Skandalis for illuminating discussions. Further we thank Viktor Ginzburg, Janusz Grabowski, Daniel Peralta, Tudor Ratiu for advice on the condition appearing in Thm. \ref{thm:secord}. We also thank Dionysios Lappas and Noah Kieserman for proofreading an earlier version of the manuscript.

I. Androulidakis was partially supported by a Marie Curie Career Integration Grant (FP7-PEOPLE-2011-CIG, Grant No PCI09-GA-2011-290823), by DFG (Germany) through project DFG-Az:Me 3248/1-1 and FCT (Portugal) through project PTDC/MAT/098770/2008. M. Zambon was partially supported by CMUP and FCT (Portugal) through the programs POCTI, POSI and Ciencia 2007, by projects PTDC/MAT/098770/2008 and PTDC/MAT/099880/2008 (Portugal) and by  projects MICINN RYC-2009-04065 and MTM2009-08166-E (Spain). M. Zambon thanks the University of G\"ottingen for  hospitality.



\section{Foliations and the transitive Lie algebroid over a leaf}\label{section:foliations}

In this section we review singular foliations from \cite{AndrSk} and construct explicitly a transitive Lie algebroid $A_L$ over a leaf $L$. Namely, in \S \ref{subsection:fol} we recall the definition of a singular foliation adding some clarifications and several examples. Then, \S \ref{subsection:splthm} provides an alternative version of \cite[Prop. 1.12]{AndrSk} which will be useful to us in this sequel, and \S \ref{subsection:ALintro} gives the construction of $A_L$.

\subsection{Foliations}\label{subsection:fol}
 
Let $M$ be a smooth manifold. Given a vector bundle $E \longrightarrow M$ , we denote by
$C^{\infty}_{c}(M;E)$   the $C^{\infty}(M)$-module of compactly supported smooth sections of $E$. 
\begin{enumerate}
    \item\label{foliation} A {\em{(singular) foliation}} on $M$ is a locally finitely generated submodule $\cF$ of the $C^{\infty}(M)$-module $\vX_c(M)=C^{\infty}_c(M;TM)$, stable by the Lie bracket. 
     
    Recall from \cite[\S1.1, item 4]{AndrSk} that the restriction of $\cF$ to an open $U \subseteq M$ is the submodule of $C^{\infty}(U;T_U M)$ generated by $f\cdot \xi|_U$, where $f \in C^{\infty}_c(U)$   and $\xi|_U$ is the restriction of a vector field $\xi \in \cF$ to $U$. In this sequel we will denote the restriction of $\cF$ to an open $U$ by $\cF_U$.
   
    Given this, the module $\cF$ being locally finitely generated (\cf \cite[\S1.1, items 3,5]{AndrSk}) means that for every point of $M$ there exists an open neighborhood $U$   and a finite number of vector fields $X_1,\ldots X_{n}
     \in \vX(M)$ such that for every $k = 1,\ldots,n$ we have 
 $\cF_{U} = C^{\infty}_c(U)X_1|_{U} + \ldots + C^{\infty}_c(U)X_{n}|_{U}$.

    \item Stefan \cite{Stefan} and Sussmann \cite{Sussmann} showed that such modules induce a partition of $M$ to injectively immersed submanifolds (leaves). Throughout this sequel we'll call a leaf $L$ \textit{regular}   if there exists an open neighborhood $W$ of $L$ in $M$, where the dimension of $L$ is  equal to the dimension of any other leaf intersecting $W$. Otherwise $L$ will be a \textit{singular} leaf.
 
    \item A foliation $\cF$ provides an alternative topology for the manifold $M$ (``longitudinal smooth structure'',  see \cite[\S1.3]{AndrSk}). Denoting $M_\cF$ the manifold $M$ endowed with this topology, we have:
\begin{itemize}
 \item $M_\cF$ is a totally disconnected manifold, with connected components the leaves. 
 \item The identity map $Id : M_\cF \to M$ is continuous but not open. 
\end{itemize}

In the sequel, when we refer to the smooth structure (and corresponding topology) on a leaf $L$, we mean the above longitudinal smooth structure. 
For embedded leaves, this topology coincides with relative topology to the usual topology of $M$. When $L$ is immersed but not embedded though (\eg a (regular) dense leaf of the irrational rotation foliation of the torus), the two topologies differ.

    \item\label{fol:restriction}      
    Let $(M,\cF)$ be a foliation and $W$ an open subset of $M$. We discussed in item \eqref{foliation} the restriction of $\cF$ to $W$. On the other hand, let $\{W_i\}_{i \in I}$ be an open cover of $M$ and $\cF_i$   a foliation on every $W_i$. If $\cF_i$ and $\cF_j$ agree on $W_i\cap W_j$ for every $i,j \in I$, then  using a partition of unity we can construct a unique foliation on $M$ which restricts to $\cF_i$ for every $i \in I$.

    \item\label{fol:pullback}  We can pull a foliation back over a {smooth map}: If $p: N \longrightarrow M$ {is smooth} then $p^{-1}(\cF)$ is the submodule 
of   $\vX_c(N)$ consisting of vector fields $Y$ such that $y\mapsto dp(Y_y)$ lies in $p^*(\cF)$. The latter is defined as the submodule of $C^{\infty}_{c}(N;p^*(TM))$ generated by $f\cdot(X\circ p)$ for $f\in C^{\infty}_{c}(N)$ and $X\in \cF$.  
In order words,   $p^{-1}(\cF)$ consists of  $C^{\infty}_{c}(N)$-linear combinations of vector fields on $N$ which are projectable and project to elements of $\cF$.


    \item    For $x \in M$ the space $\cF(x) = \{X\in \cF : X_x = 0\}$ is {Lie subalgebra}  of $\cF$. Let $I_{x} = \{f \in C^{\infty}(M) : f(x)=0\}$. Then $I_x\cF \subset \cF(x)$ is a {Lie} ideal. Hence $\g_x = \cF(x)/I_x\cF$ is a Lie algebra, called the \textit{infinitesimal isotropy} of $\cF$ at $x$.

\begin{lemma}\label{gxtri}
Let $L$ be a leaf of $\cF$.  The Lie algebras $\g_x$ are isomorphic for all $x\in L$. Further, for any $x\in L$,  $\g_x=\{0\}$ if{f} $L$ is a regular leaf.
\end{lemma}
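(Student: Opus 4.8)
The plan is to route everything through the evaluation map. For $x\in M$ consider the linear map $\mathrm{ev}_x:\cF\to T_xM$, $X\mapsto X_x$. Since every element of $I_x\cF$ vanishes at $x$, it descends to $\overline{\mathrm{ev}}_x:\cF/I_x\cF\to T_xM$, whose image is the tangent space $T_xL$ of the leaf through $x$ (this is the Stefan--Sussmann description of the leaf) and whose kernel is, by definition, $\g_x=\cF(x)/I_x\cF$. Hence there is a short exact sequence of (finite-dimensional) vector spaces
\[
0 \to \g_x \to \cF/I_x\cF \xrightarrow{\ \overline{\mathrm{ev}}_x\ } T_xL \to 0,
\]
so $\dim\g_x=\dim(\cF/I_x\cF)-\dim L$, where $\dim(\cF/I_x\cF)$ is the minimal number of local generators of $\cF$ near $x$. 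I will freely use the (Nakayama-type) fact that a family in $\cF$ whose classes form a basis of $\cF/I_x\cF$ generates $\cF$ on a whole neighbourhood of $x$.

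For the first assertion I would build an explicit Lie algebra isomorphism by flows. Since $L$ is the orbit of $x$ under the flows of vector fields in $\cF$, for any $y\in L$ there is a diffeomorphism $\phi$ of $M$, a finite composition of time-one flows $\exp(X_i)$ with $X_i\in\cF$, such that $\phi(x)=y$. Each such flow preserves $\cF$ (the standard consequence of $\cF$ being a Lie-bracket-stable module, \cite{AndrSk}), so $\phi_*:\cF\to\cF$ is a Lie algebra automorphism. A short check gives $\phi_*(\cF(x))=\cF(y)$ and $\phi_*(I_x\cF)=I_y\cF$ (for the latter, $\phi_*(fX)=(f\circ\phi^{-1})\,\phi_*X$ and $f(x)=0$ forces $(f\circ\phi^{-1})(y)=0$). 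Thus $\phi_*$ descends to a Lie algebra isomorphism $\g_x\xrightarrow{\ \sim\ }\g_y$, proving all $\g_x$, $x\in L$, are isomorphic.

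For the second assertion I would argue by semicontinuity. By the exact sequence, $\g_x=0$ iff $\dim(\cF/I_x\cF)=\dim L=\mathrm{rank}(\mathrm{ev}_x)$. Now $p\mapsto\mathrm{rank}(\mathrm{ev}_p)$ (which equals the dimension of the leaf through $p$) is lower semicontinuous, while $p\mapsto\dim(\cF/I_p\cF)$ is upper semicontinuous (minimal generators at $x$ still generate nearby), and one always has $\mathrm{rank}(\mathrm{ev}_p)\le\dim(\cF/I_p\cF)$. Under $\g_x=0$ these bounds squeeze both quantities to the constant $\dim L$ on a neighbourhood $U_x$ of $x$, i.e.\ every leaf meeting $U_x$ has dimension $\dim L$. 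By the first part $\g_y=0$ for all $y\in L$, so such a $U_y$ exists around each point of $L$, and $W=\bigcup_{y\in L}U_y$ witnesses that $L$ is regular. Conversely, if $L$ is regular then $\mathrm{ev}_p$ has constant rank $\dim L$ near $x$; this lets one reduce the local generators to a pointwise independent frame of the subbundle $E=\mathrm{im}(\mathrm{ev})$, so that $\cF$ agrees locally with $C^\infty_c(\cdot;E)$ and $\dim(\cF/I_x\cF)=\dim L$, whence $\g_x=0$.

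The main obstacle I anticipate is the second part: making the two matching semicontinuity bounds rigorous and deducing local constancy of the leaf dimension. This rests on the non-formal statement that a minimal generating family at $x$ generates $\cF$ on an entire neighbourhood, together with the identification of $\mathrm{rank}(\mathrm{ev}_p)$ with the leaf dimension. The flow-invariance of $\cF$ used in the first part is routine but should be cited precisely from \cite{AndrSk}.
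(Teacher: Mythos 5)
Your proof is correct. The first assertion is handled exactly as in the paper (transitivity of the flows $\exp(X)$, $X\in\cF$, on $L$, with $\phi_*$ preserving $\cF$ and carrying $I_x\cF$ to $I_y\cF$; you simply spell out what the paper leaves implicit), and your argument for ``$L$ regular $\Rightarrow\g_x=0$'' coincides with the paper's (reduce to generators that are pointwise independent near $x$, so $\cF(x)=I_x\cF$). Where you genuinely diverge is the remaining direction. The paper proves the contrapositive directly: for a singular leaf it takes a \emph{minimal} generating set $X_1,\dots,X_n$ on a neighbourhood $W$, uses that $W$ meets leaves of dimension $>\dim L$ to arrange $X_1\in\cF(x)$, and then shows $X_1\notin I_x\cF$ by the Nakayama-type trick $X_1=\sum_{i\neq 1}\frac{f_i}{1-f_1}X_i$ — thus exhibiting an explicit nonzero class in $\g_x$. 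You instead prove ``$\g_x=0\Rightarrow L$ regular'' by squeezing: lower semicontinuity of $p\mapsto\dim F_p=\operatorname{rank}(\mathrm{ev}_p)$, upper semicontinuity of $p\mapsto\dim\cF_p$, and the inequality $\dim F_p\le\dim\cF_p$ force both to be constant near $x$ when they agree at $x$, and the first assertion propagates this along all of $L$ to produce the neighbourhood $W$ required by the definition of a regular leaf. Both routes rest on the same non-formal input (a family inducing a basis of $\cF_x$ generates $\cF$ nearby, \cite[Prop.\ 1.5]{AndrSk}, which also underlies the semicontinuity statements recorded in item h) of \S\ref{subsection:fol}), so neither is more elementary; the paper's version has the merit of producing a concrete nontrivial element of $\g_x$, while yours makes transparent that the dichotomy is purely a matter of the two semicontinuous rank functions meeting, and reads off the neighbourhood $W$ cleanly from the definition of regularity.
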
 
\begin{proof} The first statement follows from the fact that the group $\exp(\cF)$, generated by $exp(X)$ with $X\in \cF$,   acts transitively on the leaf $L$. We now prove the second statement. If $L$ is a regular leaf, then nearby the point $x$ we can find generators of $\cF$ which are linearly independent at $x$, implying that $\cF(x)=I_x\cF$. If $L$ is a singular leaf, pick a neighborhood $W$ in $M$ of some $x\in L$,
and pick a set of generators $X_1,\dots,X_n$ of $\cF$ defined on $W$. We may assume that this is a minimal set of generators, i.e. we may assume that none of the $X_i$ can be written as a $C^{\infty}(W)$  linear combination of the others. Further, as $\{X_i(x)\}$ spans $T_xL$ and $W$ contains leaves of dimension $> dim(L)$,
we may assume that $X_1(x)$ is a linear combination of the remaining $X_i(x)$, in other words  $X_1\in \cF(x)$. However $X_1\notin I_x\cF$ because if we could write $X_1=\sum_{i=1}^n  f_iX_i$ with $f_i \in I_x$ then we would have $X_1=\sum_{i\neq 1} {f_i}/{(1-f_1)}X_i$, which contradicts the minimality assumption. \end{proof}

    \item[g)]  The {\em{fiber}} of $\cF$ at $x$ is the quotient $\cF_x = \cF/I_x\cF$. The {\em{tangent space of the leaf}} is the image $F_x$ of the evaluation map $ev_x : \cF \to T_xM$. They are both finite-dimensional linear spaces. We have the extension 
\begin{eqnarray}\label{extn:fiber}
0 \to \g_x \to \cF_x \stackrel{ev_x}{\longrightarrow} F_x \to 0. 
\end{eqnarray}
Notice that the Lie bracket on $\g_x$ does not extend to $\cF_x$.  

    \item[h)] The dimension of $F_x$ is lower semi-continuous while the dimension of $\cF_x$ is upper semi-continuous. The union of regular leaves is a dense open subset of $M$ and the two fibres coincide in this set. The {\it co-dimension} of $\cF$ is the upper semi-continuous function $codim : M \to \Z$ which maps every $x \in M$ to the co-dimension of the leaf $L_x$ at $x$.
\end{enumerate}

\begin{eps}\label{eps:1.1}
\begin{enumerate}[(i)]

\item \label{1.1firstitem} Let $X$ be a vector field on $M$ and $\cF:=\langle X \rangle$ the foliation generated by $X$,  that is, $\cF=C^{\infty}_c(M)X$. Denote by $\{X=0\}$ the vanishing set of $X$. Then Lemma \ref{gxtri} shows that $\g_x=\{0\}$ everywhere except if $x\in   \partial\{X=0\}$ (these are exactly the singular leaves), in which case $\g_x\cong \R$   due to the fact that  $dim(\cF_x)\le 1$ since $\cF$ has just one generator. The short exact sequence \eqref{extn:fiber} implies that $\cF_x\cong \R$ except if $x$ lies in the interior of $\{X=0\}$, where $\cF_x$ vanishes. 
     \begin{enumerate}
    \item \label{eps:1.1(iii)} Consider the foliation $\cF$ on $\R^2$ induced from the   action of $S^1$ by rotations. Its regular leaves are circles concentric at the origin, and the origin is a singular leaf. The module $\cF$ is generated by the vector field $X = x\partial_y - y\partial_x$. We have  $\cF_{q} = \R$ at every point $q$,   $\g_0=\R $, whereas $\g_q$ vanishes at every $q \neq 0$.
    
   \item \label{eps:1.1(ii)} Consider the action of $SO(2)\subset SO(3)$ on $S^2$ by rotations about the $z$-axis. The module $\cF$ is generated by the image of the infinitesimal action. We have $\g_N = \g_S = Lie(SO(2))\cong \R$, where $N,S$ are the two poles.

 \item \label{eps:1.1(iv)} The foliation $\cF$ on $\R$ generated by $x\partial_x$ has $\cF_x = \R$ at every point, whereas $\g_x$ vanishes at every point except for
  $\g_0\cong\R $.

\item   \label{eps:1.1(vii)} Consider the foliation $\cF = \langle f\partial_x \rangle$ on $\R$, where $f$ is some smooth function that vanishes exactly on  $\{x\leq 0\}$.   Its leaves are $L^+=\R^+$ and $L_x = \{x\}$ for every $x\leq 0$. All leaves are regular except for $L_0 = \{0\}$.
Then $\cF_x$ vanishes if $x<0$ and is one-dimensional otherwise. The infinitesimal isotropy $\g_x$ vanishes at every $x \neq 0$ except at $0$, where  $\g_0\cong\R $. Notice that $\cF$ vanishes in every order on every $x\leq 0$.    

\end{enumerate}

    \item   \label{eps:1.1(i)} In \cite[Prop. 1.4]{AndrSk} it was shown that if $V$ is a (finite-dimensional) vector space and $\cF$ is the foliation defined by a linear action $G \subseteq GL(V)$ then $\cF_0 = \g$, where $\g$ is the Lie algebra of $G$. At points $x\neq 0$ there is a natural map from the isotropy Lie algebra of the action at $x$ to $\g_x$, which is always surjective but might fail to be injective: $\R^2-\{0\}$ is a regular leaf of the action of $GL(\R^2)$ on $\R^2$, hence for any $x\in \R^2-\{0\}$ we have $\g_x=\{0\}$.

    \item \label{eps:1.1(vi)} Let $\cF^k$ the module of vector fields in $\R^2$ vanishing  ``to order $k$'' at $(0,0)$ It is generated by $x^i y^j \partial_x, x^i y^j \partial_y$ for all $i,j{\ge 0}$ such that $i+j=k$. Then $\cF^k_{(0,0)} = \g_{(0,0)} = \R^{2k+2}$.

    \item \label{eps:1.1(viii)} Consider the foliation $\cF$ on $\R$ generated by $f_1\partial_x, f_2\partial_x, \ldots , f_k\partial_x$, where the $f_i$ vanish at $0$  to order at least 1, that is, $f_i(0)={f_i}'(0)=0$. Then $$[f_i\partial_x,f_j\partial_x] = (f_i {f_j}'-f_j {f_i}')\partial_x \in I_x\cF$$ hence the Lie algebra $\g_x$ is abelian.

\item Any Lie algebroid $A \to M$ gives rise to a singular foliation $\cF:=\{\rho(a):a\in C^{\infty}_c(A;M)\}$, where $\rho\colon A \to TM$ is the anchor map.
\begin{lemma}\label{noal}
Not all singular foliations arise from Lie algebroids.
\end{lemma}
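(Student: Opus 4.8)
The plan is to produce a single foliation whose fibers $\cF_x$ have unbounded dimension, and then to observe that a foliation arising from a Lie algebroid cannot have this feature. First I would isolate the obstruction. Suppose $\cF=\{\rho(a):a\in C^{\infty}_c(A;M)\}$ for a Lie algebroid $\rho\colon A\to TM$ of rank $r$. The map $a\mapsto \rho(a)$ is a surjective, $C^{\infty}_c(M)$-linear map $C^{\infty}_c(A;M)\to\cF$, so for each $x$ it descends to a surjection $A_x\cong C^{\infty}_c(A;M)/I_x C^{\infty}_c(A;M)\twoheadrightarrow \cF/I_x\cF=\cF_x$. Hence $\dim\cF_x\le \dim A_x=r$ for every $x\in M$ at once, the bound being a single finite number because a vector bundle over the connected manifold $M$ has constant rank. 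It therefore suffices to exhibit a foliation with $\sup_x\dim\cF_x=\infty$.

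Second, I would build such an $\cF$ on the non-compact manifold $M=\R^2$ by gluing translated copies of the foliations $\cF^k$ of Example \ref{eps:1.1(vi)}. Fix points $p_k=(k,0)$, $k\in\N$, with pairwise disjoint neighbourhoods $B_k$; on $B_k$ let $\cF$ agree with the translate to $p_k$ of the module of vector fields vanishing to order $k$, and away from all the $p_k$ let $\cF$ be the full module $\vX_c$. Away from its centre each $\cF^k$ equals $TM$ (at any point $\neq p_k$ one of the degree-$k$ monomials is nonzero, so the generators already span the tangent space), so the local descriptions agree on overlaps and glue, by item \ref{fol:restriction}, to a globally defined, locally finitely generated, bracket-stable module $\cF$. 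By Example \ref{eps:1.1(vi)} we have $\dim\cF_{p_k}=2k+2$, so $\sup_x\dim\cF_x=\infty$; by the first step $\cF$ cannot arise from any Lie algebroid.

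The routine points are that the glued object is genuinely a foliation — bracket-stability and local finite generation are clear on each piece and are preserved by the gluing of item \ref{fol:restriction}, and the overlaps are harmless because both descriptions give $TM$ there — and that the fibre dimensions really equal $2k+2$, which is exactly the content of Example \ref{eps:1.1(vi)}. The genuinely essential ingredient, and the thing I expect to be the main obstacle, is non-compactness: the whole argument turns on the rank $r$ of $A$ being one finite number valid simultaneously at every point, which comes from connectedness of $M$, so the counterexample must spread the growing fibres out to infinity. The only mild subtlety is the clean verification that $I_x C^{\infty}_c(A;M)$ exhausts the compactly supported sections vanishing at $x$, so that the surjection $A_x\twoheadrightarrow\cF_x$ holds; this is a standard local-frame and partition-of-unity computation.
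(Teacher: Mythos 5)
Your proof is correct and follows essentially the same route as the paper's: both derive the obstruction that a foliation coming from a rank-$r$ Lie algebroid has pointwise invariants of dimension at most $r$ (you bound $\dim\cF_x$ via the surjection $A_x\twoheadrightarrow\cF_x$, the paper bounds $\dim\g_x$ via $\ker\rho_x\twoheadrightarrow\g_x$), and both then place translated copies of the order-$k$ foliations $\cF^k$ at the points $(k,0)\in\R^2$ to make these dimensions unbounded (you glue using item \ref{fol:restriction}, the paper multiplies by bump functions $\varphi_k$). These are only cosmetic variations, and your observation that non-compactness is the essential ingredient matches the paper's closing remark.
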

\begin{proof}
If $\cF$ is a foliation arising from a Lie algebroid $A$, then  $dim(\g_x)$ is bounded above by the rank of $A$ for all $x\in M$ (here $\g_x$ denotes the infinitesimal isotropy  of $\cF$).
Indeed consider $\ker(\rho_x)$, the isotropy of the Lie algebroid $A$ at $x$. There is a well-defined linear map $\ker(\rho_x) \to \g_x$ mapping $\overline{a}$ to $\langle \rho(a)\rangle$, where $a\in \Gamma(A)$ is any extension of $\overline{a}$. This map is surjective  since any element of $\g_x$ is represented by some vector field $X\in \cF$ vanishing at $x$, so that  $X=\rho(a)$ for some $a\in C^{\infty}_c(A;M)$ with $\rho_x(a_x)=0$.

For any $k\ge 1$, consider the foliation $\cF^{k} $ on $\R^2$ generated by      
    $(x-k)^i y^j \partial_x, (x-k)^i y^j \partial_y$ for all $i,j{\ge 0}$ such that $i+j=k$ (a trivial variation of item (\ref{eps:1.1(vi)})).
We have  $\cF^{k}_{(k,0)} = \R^{2k+2}$. Now take the foliation $\cF$ generated by $\cup_{k\ge 1}\varphi_k\cF^{k}$,
where  $\varphi_k$ is a fixed choice of bump function on $\R^2$ with small support concentrated around the point $(k,0)$. Then the infinitesimal isotropy of $\cF$ satisfies $\g_{(k,0)}=\R^{2k+2}$. In particular, the dimensions of the infinitesimal isotropies of $\cF$ are not bounded above, so $\cF$ can not arise from a Lie algebroid.
\end{proof}
{It is not clear whether the analog of Lemma \ref{noal} holds if $M$ is compact, or even if it holds locally on $M$.}
 \end{enumerate}
\end{eps}

\subsection{The splitting theorem for foliations}\label{subsection:splthm}

We show that locally every foliation is the product of a foliation vanishing at a point and a one-leaf foliation, in  analogy with the splitting theorem for Poisson structures \cite{We83}. {This is done modifying slightly the proof of  \cite[Prop. 1.12]{AndrSk}.}
 
Given singular foliations $(M_1,\cF_1)$ and $(M_2,\cF_2)$, their product is defined as   $(M_1\times M_2,\cF_1\times \cF_2)$, where $\cF_1\times \cF_2$ 
is given by $C_c^{\infty}(M_1\times M_2)(X_1+X_2)$ with $X_1$ and $X_2$ are trivial extensions to $M_1\times M_2$ of vector fields lying in $\cF_1$ and $\cF_2$ respectively.




\begin{prop}\label{transversal}
{\em\bf{(Splitting theorem)}} Let $(M,\cF)$ be a manifold with a foliation and $x \in M$, and set $k := dim (F_{x})$. Let $\hat{S} $ be  a slice at $x$, that is, an embedded submanifold such that $T_x\hat{S}\oplus F_x=T_xM$.

 Then there exists an open neighborhood $W$ of $x$ in $M$ and a diffeomorphism of foliated  manifolds 
 \begin{equation}\label{diff}
(W,\cF_W)\cong (I^k,TI^k) \times (S,\cF_S). 
\end{equation}
 Here {$\cF_W$ is the restriction of $\cF$ to $W$,} $I:=(-1,1)$, $S:=\hat{S} \cap W$ and   $\cF_S$ consists of the restriction to $S$ of vector fields in $W$ tangent to $S$ (so $\cF_S=\iota^{-1}\cF$ for $\iota \colon S \hookrightarrow W$). 
 
In particular, if  we denote by $s_1,\dots,s_k$ the canonical coordinates on $I^k$ and $X_1,\dots,X_l$ are generators of $\cF_S$, then
$\cF_W$ is generated by $\partial_{s_1},\dots, \partial_{s_k}$ and the (trivial extensions of) $X_1,\dots,X_l$.
\end{prop}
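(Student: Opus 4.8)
The plan is to straighten out the leaf directions at $x$ by flowing along vector fields in $\cF$, and then to check that the resulting chart turns $\cF_W$ into a product foliation. First I would choose $Y_1,\dots,Y_k\in\cF$ whose values $Y_1(x),\dots,Y_k(x)$ form a basis of $F_x$; this is possible because $F_x$ is by definition the image of the evaluation map $ev_x\colon\cF\to T_xM$ and has dimension $k$. Writing $\exp(tY)$ for the local flow of a vector field $Y$, I would define
$$\Phi(s_1,\dots,s_k,p)=\big(\exp(s_1Y_1)\circ\cdots\circ\exp(s_kY_k)\big)(p)$$
on a neighborhood of $(0,x)$ in $I^k\times\hat S$. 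Its differential at $(0,x)$ sends $\partial_{s_i}\mapsto Y_i(x)$ and restricts to the identity on $T_x\hat S$; since $T_x\hat S\oplus F_x=T_xM$ by hypothesis, $d\Phi_{(0,x)}$ is an isomorphism, and the inverse function theorem yields a diffeomorphism from a product neighborhood $I^k\times S$ (after rescaling the flow parameters) onto an open set $W\ni x$, with $S=\hat S\cap W$.

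The key structural input is that for any $Y\in\cF$ the local flow $\exp(tY)$ preserves $\cF$, a standard consequence of $\cF$ being involutive and locally finitely generated (see \cite{AndrSk}). Using this I would compute $\Phi_*(\partial_{s_i})=\big(\exp(s_1Y_1)\circ\cdots\circ\exp(s_{i-1}Y_{i-1})\big)_*Y_i$, which is the flow-image of $Y_i\in\cF$ and hence lies in $\cF_W$. Consequently $\cF':=\Phi^{-1}(\cF_W)$ is a foliation on $I^k\times S$ that contains all the coordinate fields $\partial_{s_1},\dots,\partial_{s_k}$.

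It then remains to show $\cF'=TI^k\times\cF_S$. Since the $\partial_{s_i}$ lie in $\cF'$ and $\cF'$ is bracket-stable, $[\partial_{s_i},\cF']\subseteq\cF'$, so $\cF'$ is invariant under the translations generated by the $\partial_{s_i}$. Modulo these fields every element of $\cF'$ is congruent to an $S$-vertical field, and translation-invariance lets me replace such a field by its restriction to $\{0\}\times S$ extended constantly in the $s$-variables. Identifying $\{0\}\times S$ with $S$ via $\iota$ and unwinding the definition $\cF_S=\iota^{-1}\cF$ would then give both inclusions, hence the product decomposition; the explicit generators $\partial_{s_1},\dots,\partial_{s_k},X_1,\dots,X_l$ of the final assertion follow immediately by transporting generators of $\cF_S$ back through $\Phi$.

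I expect the main obstacle to be this last step: matching the $s$-independent vertical part of $\cF'$ precisely with the module $\cF_S=\iota^{-1}\cF$. The delicate points are to perform the reduction at the level of finitely generated $C^\infty$-modules rather than pointwise, to verify that translation-invariance produces genuine $s$-independent generators, and to confirm that the restriction to $S$ of the fields in $\cF_W$ tangent to $S$ reproduces exactly $\iota^{-1}\cF_W$, neither more nor less.
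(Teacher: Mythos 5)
Your argument is correct in outline and rests on the same two pillars as the paper's proof --- straightening the leafwise directions by flows of vector fields in $\cF$, and using the invariance of $\cF$ under those flows to split off a trivial factor --- but you organize it differently: you build a single chart from the $k$-fold composition $\exp(s_1Y_1)\circ\cdots\circ\exp(s_kY_k)$ and verify the product structure in one step, whereas the paper inducts on $k$, splitting off one interval factor at a time via the $k=1$ flow-box of \cite[Prop.~1.12]{AndrSk} applied to a transversal hypersurface $V\supset S$. The induction buys one concrete simplification that your one-shot version must pay for separately: in your chart, $\Phi_*(\partial_{s_i})$ for $i\ge 2$ is the pushforward of $Y_i$ by a diffeomorphism that \emph{depends on the point} (through the coordinates $s_1,\dots,s_{i-1}$ of that point), so the bare statement ``the flow of an element of $\cF$ preserves $\cF$'' does not apply directly; you need the parametrized version --- smooth dependence on $s$ of the coefficients of $(\exp(s_1Y_1)\circ\cdots\circ\exp(s_{i-1}Y_{i-1}))_*Y_i$ in a local finite generating set, which is what the ODE argument behind \cite[Prop.~1.6]{AndrSk} actually yields --- to conclude that the ``diagonal'' field $\Phi_*(\partial_{s_i})$ lies in $\cF_W$. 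In the inductive scheme each step handles a single vector field, which is carried exactly to $\partial_s$, and this issue never arises. The final step you flag as the main obstacle --- identifying the translation-invariant vertical part of $\Phi^{-1}(\cF_W)$ with the compactly supported families valued in $\cF_S=\iota^{-1}\cF$ --- is genuinely the delicate module-level point, but it is precisely the point the paper also defers to the proof of \cite[Prop.~1.12]{AndrSk} (the assertion that $fX+Z\in\cF$ if and only if $Z|_{\{s\}\times V}\in\cF_{\{s\}\times V}$ for all $s$), so your route introduces no new difficulty there. Finally, a cosmetic remark: after applying the inverse function theorem you should shrink $W$ so that $\hat S\cap W$ coincides with $\Phi(\{0\}\times S)$, as the statement requires $S=\hat S\cap W$.
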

\begin{proof}
We proceed by induction. For $k=0$ the result is clear, so assume $k\ge1$ and
take $X\in \cF$ to be a vector field with $X(x)\neq 0$. Let $\hat{V} \subset M$ be a submanifold with $\hat{S} \subset \hat{V} $ and $T_x\hat{V} \oplus \R X(x)=T_xM$. The
  proof of  \cite[Prop. 1.12]{AndrSk}  shows that there is    a neighborhood $W$ of $x$ in $M$ and  a diffeomorphism
  \begin{equation}\label{diffeo1}
  W\cong I\times V
\end{equation}
 mapping the vector field $X$ to ${\partial_s}$, where $s$ is the canonical coordinate on 
$I$ and $V=\hat{V} \cap W$. Under this identification, every compactly supported vector field on $W$ is of the form $fX+Z$, where $Z$ is tangent  $\{s\}\times V$ for all $s$. Further it shows that   $fX+Z\in \cF$ if{f} $Z|_{\{s\}\times V}\in \cF_{\{s\}\times V}$ for all $s$, where the latter consists of the restrictions to $\{s\}\times V$ of  vector fields  in $\cF$ tangent to it.
 
We infer that  $\cF=  TI\times \cF_V$, where $\cF_V:=\cF_{\{0\}\times V}$. That is, eq. \eqref{diffeo1} is  a  diffeomorphism of   manifolds  with foliations 
\begin{equation}\label{diffeo2}
 (W,\cF_W)\cong (I,TI)\times (V,\cF_V)
\end{equation}

 Now $(F_V)_x=F_x\cap T_xV$ has dimension $k-1$ and $S$ is a slice to $(V,\cF_V)$ at $x$, so by the induction hypothesis there is a  diffeomorphism of   manifolds  with foliations
\begin{equation}\label{diff2}
(V,\cF_V)\cong  (I^{k-1},TI^{k-1})\times (S,\cF_S). 
\end{equation}
(Notice that the foliation on $S$ induced by $\cF_V$ agrees with the one induced by $\cF$, which is $\cF_S$.)
Inserting eq. \eqref{diff2} into eq. \eqref{diffeo2}  we obtain the seeked eq. \eqref{diff}.
\end{proof}

\begin{eps}
\begin{enumerate}[(i)]
\item Let $m$ be a positive integer and consider the foliation $\cF_m = \langle x^{m}\partial_{x}\rangle$ on $\R$. Its leaves are $(-\infty,0), \{0\}$ and $(0,+ \infty)$. If $x$ is non-zero, a transversal at $x$ is just a point with the obvious (zero) foliation. At zero a transversal is $ (\R,\cF_m)$.

\item Consider the action of $S^{1}$ on $\R^{2}$ by rotations. The leaves of $(\R^{2},\cF)$ are concentric circles with a singularity at zero. A transversal at zero is $(S_x,\cF_{S_x}) = (\R^{2},\cF)$. At an $x \neq 0$, any (affine) line $S_x \cong \R$ passing through $x$ is a transversal and $\cF_{V}$ is the foliation by points (generated by the zero vector field in the identification $S_x \cong \R$). In this case there exists a canonical choice of $S_x$, namely the line connecting $x$ with the origin.
\end{enumerate}
\end{eps}

\begin{remark}\label{isogx}
Fix a slice $S$ at $x$ transverse to the foliation $\cF$.  By definition $(F_S)_x = F_x\cap T_x S = \{0\}$, so the exact sequence \eqref{extn:fiber} applied for the foliation $(S,\cF_S)$ gives $(\cF_S)_x = (\g_S)_x$. Further, the latter is canonically isomorphic  to $\g_x$.

Indeed, by the splitting theorem, in every class of $\g_x=\cF(x)/I_x\cF$ there is a representative $Y$ tangent to $S$. Mapping the class of such a   $Y$  to the class of $Y|_S$ gives a well-defined map $\cF(x)/I_x \cF \to \cF_S/I_x \cF_S$. It is surjective, as we can extend an element of $\cF_S$ to one of $\cF(x)$. Further both vector spaces have the same dimension, as a consequence of the splitting theorem
(notice that $\cF_I(x)/I_x \cF_I=\{0\}$ since the foliation $\cF_I$ on $I$ is regular).
 \end{remark}

\subsection{The transitive Lie algebroid $A_L$ over a leaf}\label{subsection:ALintro}

Let $L$ be a leaf of the foliation $(M,\cF)$. There exists a transitive Lie algebroid $A_L$ over $L$, described in \cite[Remark 1.16]{AndrSk}. Its integrability is important because if $A_L$ is integrable then the construction of pseudodifferential calculus in \cite{AndrSk1} is simplified a great deal. This is discussed thoroughly in the beginning of \S \ref{subsection:ALintegr}.

In this subsection we assume that the leaf $L$ is  \emph{embedded} (and not just immersed), and we make explicit the Lie algebroid $A_L$. Consider $\cF/I_L\cF$, where $I_L$ is the space of functions in $C^{\infty}(M)$ which vanish on the leaf $L$.

\begin{prop}\label{A_Lvb} Let $L$ be an embedded leaf.
 \begin{enumerate}
\item 
 $A_L:=\cup_{x\in L} \cF_x$ is endowed with the structure of a smooth vector bundle   over $L$. 
 \item  $\cF/I_L\cF$ is a $C^{\infty}(L)$-module,
and  $\cF/I_L\cF= C^{\infty}_c(A_L;L)$, where the latter denotes the compactly supported sections. 
\end{enumerate}
 \end{prop}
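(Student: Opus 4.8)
The plan is to build the bundle structure on $A_L$ from the local model furnished by the splitting theorem (Proposition \ref{transversal}), and then identify $\cF/I_L\cF$ with the module of its compactly supported sections via the map $[X]\mapsto\hat X$, where $\hat X(y):=[X]_y\in\cF_y$. First I would record that the fibre dimension is constant along $L$: by the exact sequence \eqref{extn:fiber} one has $\dim\cF_y=\dim F_y+\dim\g_y$, and along $L$ we have $\dim F_y=\dim L$ while, by Lemma \ref{gxtri}, the $\g_y$ are all isomorphic, so $\dim\cF_y=:r$ is constant. Then I fix $x\in L$ and a splitting chart $W\cong I^k\times S$ (with $k=\dim L$) as in Proposition \ref{transversal}, so that $\cF_W$ is generated by $\partial_{s_1},\dots,\partial_{s_k}$ and the trivial extensions $\tilde X_1,\dots,\tilde X_l$ of generators of $\cF_S$, and $L\cap W=I^k\times\{x\}$. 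I would define a trivialization $\Phi\colon (L\cap W)\times(\R^k\oplus(\cF_S)_x)\to A_L|_{L\cap W}$ by $\Phi((s,x),(v,[Y]))=\sum_i v_i[\partial_{s_i}]_{(s,x)}+[\tilde Y]_{(s,x)}$. Well-definedness is immediate since $Y\in I_x\cF_S$ forces $\tilde Y\in I_{(s,x)}\cF_W$, and surjectivity holds because the listed generators span each fibre. For injectivity I would apply $ev_{(s,x)}$ to kill $v$ (using that $(F_S)_x=\{0\}$, i.e. every element of $\cF_S$ vanishes at $x$, together with the linear independence of the $\partial_{s_i}$), and then use the slice-restriction map $\cF_W\to\cF_S$ coming from the product description in Proposition \ref{transversal} to deduce that $[\tilde Y]_{(s,x)}=0$ forces $Y\in I_x\cF_S$. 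By Remark \ref{isogx}, $(\cF_S)_x\cong\g_x$, so $\Phi$ exhibits $A_L|_{L\cap W}$ as trivial of rank $r=\dim L+\dim\g_x$.

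Next I would declare these $\Phi$'s to be the trivializations defining the smooth structure, and verify smooth compatibility through the sections $\hat X$ for $X\in\cF$. Writing $X=\sum_i f_i\partial_{s_i}+\sum_j g_j\tilde X_j$ on $W$, one computes $\Phi^{-1}(\hat X|_y)=\big((f_i(y))_i,\ \sum_j g_j(y)[X_j]_x\big)$, which is smooth in $y$. In particular the frame sections of $\Phi$ are themselves of the form $\hat Y$ with $Y\in\cF_W$, hence smooth in any other such chart $\Phi'$, so the transition functions are smooth. Since $L$ is connected and the rank is constant, these charts glue $A_L$ into a smooth vector bundle of rank $r$, which proves part (1).

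For part (2), since $L$ is embedded we have $C^{\infty}(M)/I_L\cong C^{\infty}(L)$, and $I_L\cF$ is a submodule absorbing $I_L$, so $\cF/I_L\cF$ is a $C^{\infty}(L)$-module. I would then consider $\Psi\colon\cF/I_L\cF\to C^{\infty}_c(A_L;L)$, $[X]\mapsto\hat X$. It is well defined (if $X\in I_L\cF$ then $[X]_y=0$ for every $y\in L$), it is $C^{\infty}(L)$-linear, and it lands in compactly supported sections since $\operatorname{supp}\hat X\subseteq\operatorname{supp}X\cap L$; smoothness of $\hat X$ was already established above. Surjectivity is a partition-of-unity argument: a compactly supported section is, on each splitting chart, a combination of the frame sections $\hat Y$ with smooth coefficients, and these local data are patched to a global $X\in\cF$ with $\hat X$ equal to the prescribed section.

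The crux is the injectivity of $\Psi$, equivalently the identity $\bigcap_{y\in L}I_y\cF=I_L\cF$: if $[X]_y=0$ for all $y\in L$ then $X\in I_L\cF$. Using a partition of unity subordinate to splitting charts covering $L$ (available on a tubular neighbourhood of the embedded leaf), this reduces to the local statement in $W\cong I^k\times S$. There the $\partial_s$-part $\sum_i f_i\partial_{s_i}$ lies in $I_{L\cap W}\cF_W$ because each $f_i$ vanishes on $L\cap W=I^k\times\{x\}$; the difficulty is the transverse part, where one must upgrade the pointwise conditions $\sum_j g_j(s,x)X_j\in I_x\cF_S$ (holding for every $s$) to the membership $\sum_j g_j\tilde X_j\in I_{L\cap W}\cF_W$. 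I expect this smoothly parametrized version of membership in $I_x\cF_S$ — which requires resolving the relations among the generators $X_j$ of $\cF_S$ with smooth coefficients in the parameter $s$ — to be the main technical obstacle, to be handled using the local finite generation of $\cF_S$ and the product description of $\cF_W$ from Proposition \ref{transversal}.
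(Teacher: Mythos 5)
Your construction follows essentially the same route as the paper: cover $L$ by splitting charts from Prop.~\ref{transversal}, trivialize $A_L$ over each $L\cap W$ (the paper glues the trivial bundles $\cF_{x_\alpha}\times L_\alpha$ using the affine structure of $I^k$, which is your $\Phi$ in slightly different clothing), and identify $\cF/I_L\cF$ with $C^{\infty}_c(A_L;L)$ via $[X]\mapsto \hat X$, proving surjectivity by hitting the frame (resp.\ constant) sections on each chart and patching with a partition of unity. Part (1) and the surjectivity half of part (2) are complete and correct.

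The one place you stop short is the injectivity of $\Psi$, i.e.\ the identity $\bigcap_{y\in L}I_y\cF=I_L\cF$, which you correctly isolate but leave as ``to be handled.'' (For what it is worth, the paper does not prove this either: it simply declares $\tau$ to be an embedding.) The step does go through with exactly the tools you name. In a splitting chart write $X=\sum_i f_i\partial_{s_i}+\sum_j g_j\tilde X_j$. Evaluating at $y=(s,x)\in L\cap W$ forces $f_i(s,x)=0$, so $\sum_i f_i\partial_{s_i}\in I_{L\cap W}\cF_W$, and by your trivialization the remaining condition is that $\sum_j g_j(s,x)X_j\in I_x\cF_{S}$ for every $s$. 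Let $V=\{c\in\R^l:\sum_j c_jX_j\in I_x\cF_{S}\}$, a linear subspace of $\R^l$ in which the smooth map $s\mapsto g(s,x)$ takes values. Choose a basis $v^{(1)},\dots,v^{(m)}$ of $V$; each relation $\sum_j v^{(a)}_jX_j=\sum_j\phi^{(a)}_jX_j$ with $\phi^{(a)}_j\in I_x(S)$ extends trivially to $W$ with $\tilde\phi^{(a)}_j\in I_{L\cap W}$. Writing $g_j=\bigl(g_j-g_j(\cdot,x)\bigr)+\sum_a h_a(s)\,v^{(a)}_j$, where the $h_a$ are the (smooth, because linear in $g(\cdot,x)$) coordinates of $g(\cdot,x)$ in the chosen basis of $V$, the first summand lies in $I_{L\cap W}$ and the second contributes $\sum_{a,j}h_a\tilde\phi^{(a)}_j\tilde X_j\in I_{L\cap W}\cF_W$. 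A partition of unity (together with the observation that any piece of $X$ supported away from $L$ is automatically in $I_L\cF$) globalizes this. With this paragraph inserted, your proof is complete and, if anything, more careful than the paper's on this point.
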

\begin{proof} 
a) Cover $L$ by open subsets $\{W_{\alpha}\}_{\alpha \in A}$ of $M$ as in the splitting theorem (Prop. \ref{transversal}), i.e. so that there exists an isomorphism of foliated manifolds
\begin{equation}\label{isoalpha}
\psi_{\alpha} \colon (W_{\alpha},\cF_{W_{\alpha}}) \cong (I^k,TI^k)\times (S_{\alpha},\cF_{S_{\alpha}}).
\end{equation} Here $S_{\alpha}$ is a slice transverse to $L$ at some fixed point $x_{\alpha}$ and $I=(-1,1)$. We denote  $L_{\alpha}:=\psi_{\alpha}^{-1}(  I^k\times \{x_{\alpha}\})$.

Consider  the disjoint union of trivial vector bundles 
\begin{equation}\label{trivvb}
\bigcup_{\alpha \in A} (\cF_{x_{\alpha}} \times L_{\alpha})
\end{equation}
over $\cup_{\alpha \in A}L_{\alpha}$. Notice that  $\psi_{\alpha}$ and the affine structure of $I^k$
  provides identifications $\cF_{x_{\alpha}}\cong \cF_{x}$ for all $x\in L_{\alpha}$.
If $x\in L_{\alpha}\cap L_{\beta}$,    we therefore obtain an isomorphism
$$(\cF_{x_{\alpha}},\{x\}) \cong \cF_x \cong (\cF_{x_{\beta}},\{x\})$$
providing an equivalence relation on the vector bundle \eqref{trivvb}.
The quotient  is a vector bundle $A_L \to L$. 
Notice that, as a set, $A_L$ is just the union $\cup_{x\in L} \cF_x$.

b) To show that $\cF/I_L\cF$ is a $C^{\infty}(L)$-module, put $f[X] = [\tilde{f}\cdot X]$ for every $f \in C^{\infty}(L)$ and $X \in \cF$. Here $\tilde{f}$ is any smooth extension of $f$ from $L$ to $M$, which exists since $L$ is an embedded submanifold.
Since any two extensions of $f$ differ by an element of $I_L$,
the above definition does not depend  choices, and makes $\cF/I_L\cF$ a $C^{\infty}(L)$-module.

Given $X\in \cF$, there is an embedding $\tau$ that associates to $[X]\in \cF/I_L\cF$ the   element of $C^{\infty}_c(A_L;L)$ given by $$L \to A_L, x \mapsto X \text{ mod }I_x\cF.$$ (This section of $A_L$ has compact support because $X$ is a compactly supported vector field). In order to show that $\tau$ is surjective, we cover $L$ by open subsets $\{W_{\alpha}\}_{\alpha \in A}$ of $M$ as in the splitting theorem and show:

Claim: \emph{For every $\alpha \in A$, the map $\cF_{W_{\alpha}}/I_{L_{\alpha}}\cF_{W_{\alpha}}\to C^{\infty}(L_{\alpha},A_L|_{L_{\alpha}})$
 obtained restring $\tau$ is surjective.}

Since both sides are $C^{\infty}(L)$-modules, it is sufficient to show that for any constant section $c$ of the trivial vector bundle $A_L|_{L_{\alpha}}\cong \cF_{x_{\alpha}} \times L_{\alpha}$
there exists $Y\in \cF_{W_{{\alpha}}}$ so that $\tau([Y])=c$. 
Since $c$ is a constant section, we can view it as an element 
 $c\in \cF_{x_{\alpha}}=T_{0}I^k\oplus (\cF_{S_{\alpha}})_{x_{\alpha}} $ using eq. \eqref{isoalpha}. The first component of $c$ and any lift of the second component of $c$ to 
$\cF_{S_{\alpha}}$ give rise to an element $Y \in \cF_{W_{{\alpha}}}$, invariant in the $I^k$ direction, with the required property. This proves the claim.

The surjectivity of $\tau$ now follows: given $s\in C^{\infty}_c(A_L;L)$,
there is a finite subset $B$ of the index set $A$ such that the support of $s$ is contained in $\cup_{\alpha\in B} L_{\alpha}$. Using an associated partition of unity we can write $s=\sum_{\alpha\in B} s_{\alpha}$ where the  elements $s_{\alpha}$ have support   in $L_{\alpha}$. By the claim we have $s_{\alpha}=\tau[Y_{\alpha}]$ some  $Y_{\alpha}\in \cF$. So we conclude that 
$s =\tau[ \sum_{\alpha\in B} Y_{\alpha}]$. 
\end{proof}

From  Prop.\ref{A_Lvb} it follows that the Lie bracket of $\cF$ and 
 the surjective map $\rho : C^{\infty}(L;A_L) \to \vX(L), \rho[X] = X|_{L}$ make the triple $(A_L,[\cdot,\cdot],\rho)$ into a transitive Lie algebroid.

In fact, the evaluation map of eq. \eqref{extn:fiber} is nothing else than the restriction of $\rho$ to a fiber $(A_L)_x$. Whence the kernel of the anchor map $\rho$ is the field of Lie algebras $\g_L = \cup_{x \in L}\g_x$. This is actually a locally trivial Lie algebra bundle from the general theory of transitive Lie algebroids (\cf \cite{KCHM}).

\begin{remark}\label{rem:immleaf}
The construction of $A_L$ for an immersed leaf $L$ (endowed with the 
longitudinal smooth structure)
is exactly as in part a) of the above Prop. \ref{A_Lvb}. However in that case $\cF/I_L\cF$ -- which is not even a $C^{\infty}(L)$ module -- is not contained in the space of sections of $A_L$. On the other hand, on open sets $L_{\alpha}$ of $L$, we have $C^{\infty}(L_{\alpha},A_L|_{L_{\alpha}})=\cF_{W_{\alpha}}/I_{L_{\alpha}}\cF_{W_{\alpha}}$.   Notice that using a partition of unity for $L$ we can glue elements $\xi_\alpha \in \cF_{W_{\alpha}}/I_{L_{\alpha}}\cF_{W_{\alpha}}$ to obtain elements of $C^{\infty}(L;A_L)$.
\end{remark}

\section{The holonomy groupoid}\label{section:holgpd}

In \S \ref{subsection:bisubmersions} we review the notion of a bi-submersion introduced in \cite{AndrSk} and clarify its role in the study of foliations in \S \ref{subsubsection:absbis}. Then, \S \ref{subsection:morbisubbis} gives explicit formulas of morphisms of bi-submersions that will be used in \S \ref{section:isotropy} to study the smoothness of the holonomy covers. For the convenience of the reader, we provide in \S \ref{sec:groatlas} a brief review of the construction of the holonomy groupoid given in \cite{AndrSk} as a quotient of an atlas of bi-submersions, together with explicit calculations of this groupoid for some foliations.

\subsection{Bi-submersions}\label{subsection:bisubmersions}

Let $(M,\cF)$ be a (singular) foliation. Here we recall the notion of bi-submersion from \cite{AndrSk} and discuss how it provides an equivalent definition of a foliation.
\begin{enumerate}
    \item A {\em{bi-submersion}} of $(M,\cF)$ is a smooth manifold $U$ endowed with two submersions $\bt, \bs : U \longrightarrow M$ satisfying:
        \begin{enumerate}
            \item[(i)] $\bs^{-1}(\cF) = \bt^{-1}(\cF)$,
            \item[(ii)] $\bs^{-1}(\cF) =  C^{\infty}_{c}(U;\ker d\bs) + C^{\infty}_{c}(U;\ker d\bt)$.
        \end{enumerate}
        Recall that $\bs^{-1}(\cF)$  was defined in item \ref{fol:pullback}) of \S \ref{subsection:fol}.
        
We say $(U,\bt,\bs)$ is {\em{minimal}} at $u$ if $dim(U) = dim(M) + dim(\cF_{\bs(u)})$.
    \item\label{phbisubm} Let $x\in M$, and $X_1,\ldots,X_n \in \cF$ inducing a basis of $\cF_x$.  In \cite[Prop. 2.10 a)]{AndrSk} it was shown that there is an open neighborhood $U$ of $(x,0)$ in $M \times \R^n$ such that $(U,\bt_U,\bs_U)$ is a bi-submersion minimal at $(x,0)$, where $\bs_U(y,\xi)=y$ and
   $\bt_U(y,\xi) = exp_y(\sum_{i = 1}^n \xi_i X_i)$. (Recall that the latter is the image of $y$ under the time-$1$ flow of $\sum_{i = 1}^n \xi_i X_i$.) Bi-submersions arising this way are called  {\em{path holonomy bi-submersions}}.  

    \item Let $(U_i,\bt_i,\bs_i)$ be bi-submersions, $i = 1,2$. Then $(U_i,\bs_i,\bt_i)$ are bi-submersions, as well as $(U_1\circ U_2,\bt,\bs)$ where $U_1\circ U_2 = U_1 \times_{\bs_1,\bt_2}U_2$, $\bt(u_1,u_2) = \bt(u_1)$ and $\bs(u_1,u_2) = \bs(u_2)$. They are called the \emph{inverse} and \emph{composite} bi-submersions respectively.
\end{enumerate}
   
\begin{eps}\label{eps:1.2}
\begin{enumerate}
   \item[(i)] In example \ref{eps:1.1}(i)(a) the
path holonomy  bi-submersion, minimal at $x=0$, associated to $X\in \cF$ is  given by $U \subset \R^2 \times \R$, $\bs : U \to \R^2$ the projection and $\bt : U \to \R^2$  the map 
$$\left(\left(\begin{array}{cc}  x  \\y \end{array}\right),\varepsilon\right)
 \mapsto \left(\begin{array}{cc} cos(\varepsilon) & -sin(\varepsilon)\\
 sin(\varepsilon) & cos(\varepsilon) 
   \end{array}\right)
\left(\begin{array}{cc}  x  \\y \end{array}\right)
$$
   \item[(ii)] In example \ref{eps:1.1}(i)(c) the
path holonomy  bi-submersion, minimal at $x=0$, associated to $x\partial_x$ is $(U,\bt,\bs)$ where $U \subset \R \times \R$, $\bs$ is the first  projection and $\bt(x,\varepsilon) = xe^{\varepsilon}$.
\end{enumerate}
\end{eps}

\subsubsection{Abstract bi-submersions}\label{subsubsection:absbis}

Note that bi-submersions determine the foliation $\cF$ quite directly. Indeed, if $f : U \to M$ is a {bi-}submersion, the foliation $f^{-1}(\cF)$ on $U$ obviously determines the foliation $\cF$ on $f(U)$. This way a bi-submersion $(U,\bt,\bs)$ determines $\cF$ on $\bs(U)$ and $\bt(U)$. 

In fact, this observation leads to an abstraction of the notion of bi-submersion. Although we will not use this abstraction in the sequel, we present it here, as it allows the notion of bi-submersion to be better understood. In particular, it exhibits that bi-submersions are actually an equivalent definition of a foliation. This material was communicated to us by G. Skandalis. Let us start with a simple lemma:

\begin{lemma}\label{lem:submfol}
Let $f : N \to M$ be a surjective submersion with connected fibres. Let $\cF_N$ be a foliation on $N$ such that  $C^{\infty}(N;\ker df) \subseteq \cF_N$. Then there exists a unique foliation $\cF$ on $M$ such that $f^{-1}(\cF) = \cF_N$.
\end{lemma}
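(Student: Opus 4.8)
The plan is to build $\cF$ from local sections of $f$, glue the local pieces, and deduce uniqueness from the functoriality of the pull-back operation $(\cdot)^{-1}$. Throughout I will use two standard facts: that the pull-back of a foliation by a transverse embedding is again a foliation (the slice construction of Prop.~\ref{transversal}), and that the flow of any element of $\cF_N$ preserves $\cF_N$ (\cf \cite{AndrSk}).

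First, for existence I would cover $M$ by opens $V$ carrying a smooth section $\sigma\colon V\to N$ of $f$ (these exist because $f$ is a submersion) and set $\cF_\sigma:=\sigma^{-1}(\cF_N)$. Since $\cF_N$ contains all vertical fields $C^{\infty}(N;\ker df)$ while $\sigma(V)$ is complementary to the fibres, the span of $\cF_N$ along $\sigma(V)$ together with $T\sigma(V)$ fills out $TN$; hence $\sigma(V)$ is transverse to $\cF_N$ and $\cF_\sigma$ is a genuine (locally finitely generated, involutive) foliation. Concretely, in a submersion chart $V\times W\to V$ one may take generators of $\cF_N$ of the form horizontal${}+{}$vertical, and the restrictions to $\sigma(V)$ of the horizontal generators give finitely many generators of $\cF_\sigma$.

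The heart of the matter is the identity $f^{-1}(\cF_\sigma)=\cF_N$ over $f^{-1}(V)$. Near the section, in a chart $V\times W$, write the horizontal generators of $\cF_N$ as $Y_i=\sum_l a_{il}(x,t)\partial_{x_l}$. Because $\partial_{t_j}\in\cF_N$ and $\cF_N$ is involutive, $[\partial_{t_j},Y_i]=\sum_l(\partial_{t_j}a_{il})\partial_{x_l}\in\cF_N$, so the coefficient matrix satisfies a linear system $\partial_{t_j}a=h^{j}a$ with $h^{j}$ smooth. Integrating this flat system (equivalently, using that the flow of $\partial_{t_j}\in\cF_N$ preserves $\cF_N$) shows that the $C^{\infty}$-module spanned by the horizontal generators is independent of $t$, hence generated by its value along $\sigma(V)$; this is exactly $f^{-1}(\cF_\sigma)=\cF_N$ in the chart. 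To upgrade this from a neighborhood of $\sigma(V)$ to all of $f^{-1}(V)$ I would propagate along the fibres: any point of $f^{-1}(x)$ is joined to $\sigma(x)$ by a path in the connected fibre, which I cover by finitely many submersion charts; the flows of vertical vector fields lie in $\cF_N$, cover the identity on $M$, move one chart into the next, and preserve both $\cF_N$ and $f^{-1}(\cF_\sigma)$ (the latter since $f\circ\psi_s=f$ forces $\psi_s^{-1}f^{-1}(\cF_\sigma)=f^{-1}(\cF_\sigma)$), so the identity spreads over the whole fibre. This propagation step, and the attendant use of connectedness of the fibres, is where I expect the real work, and the only place the connectedness hypothesis is used.

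Finally, section-independence, gluing, and uniqueness all fall out of the functoriality $(f\circ\sigma)^{-1}=\sigma^{-1}\circ f^{-1}$. If $\sigma'$ is a second section over $V$, then $\cF_{\sigma'}=\sigma'^{-1}(\cF_N)=\sigma'^{-1}f^{-1}(\cF_\sigma)=(f\circ\sigma')^{-1}(\cF_\sigma)=\cF_\sigma$, since $f\circ\sigma'=\id_V$; thus the local foliations agree on overlaps and glue, by item \ref{fol:restriction}, to a global $\cF$ on $M$, for which $f^{-1}(\cF)=\cF_N$ holds because it is a local statement on $M$. Uniqueness is the same computation: if $f^{-1}(\cF')=\cF_N$, then over the domain of any section $\sigma$ we get $\cF'|_V=(f\circ\sigma)^{-1}(\cF')=\sigma^{-1}f^{-1}(\cF')=\sigma^{-1}(\cF_N)=\cF_\sigma=\cF|_V$, so $\cF'=\cF$.
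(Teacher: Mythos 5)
Your proof is correct and follows essentially the same route as the paper's: both rest on decomposing $\cF_N$ locally into its vertical part plus a fibrewise-constant horizontal part (your ODE computation is exactly the invariance-under-flows fact the paper cites from \cite{AndrSk}), and both invoke connectedness of the fibres only to compare the induced foliations at different heights of one fibre --- you by propagating with vertical flows, the paper by a finite chain of overlapping submersion charts. The functorial packaging of uniqueness via $(f\circ\sigma)^{-1}=\sigma^{-1}\circ f^{-1}$ is a tidy touch but not a genuinely different argument.
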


\begin{proof} 
Assume first that $N = M \times \R^k$ and $f$ is the first projection. In this case $\cF_N$ is the direct sum of its vertical part generated by $\partial/\partial x_i$ (where $(x_1,\ldots,x_k)$ are coordinates of $\R^k$) and a horizontal part $\cF_{hor}$. Now $\cF_N$ is invariant by translations in the $\R^k$ direction (\cf \eg \cite[Prop 1.6]{AndrSk}), whence its horizontal part is constant along $\R^k$. It is the set of smooth functions with compact support from $\R^k$ to a foliation $\cF_{M}$ of $M$. Now let $\iota : M \to M \times \R^k, x \mapsto (x,0)$. It is transverse to $\cF_N$. Since $\cF_N$ is of the form $f^{-1}(\cF_M)$ we have $\iota^{-1}(\cF_N) = \cF_M$ and uniqueness follows.

In general, $N$ is the union of open sets $W_i {\cong} U_i \times \R^k$, where the $U_i$ are open sets in $M$ {and $f|_{W_i}$ corresponds to the projection $U_i \times \R^k \to U_i$}. The restriction of $\cF_N$ to $W_i$ is of the form $f^{-1}(\cF_i)$, where $\cF_i$ is a foliation on $U_i$. By uniqueness, $\cF_i$ and $\cF_j$ agree on $f(W_i\cap W_j)$. By connectedness of the fibres, for every $x \in M$ and every $i,j \in I$ such that $x \in U_i\cap U_j$, there exists a finite sequence $i = i_1,\ldots,i_r = j$ such that $x \in f(W_{i_{\ell}} \cap W_{i_{\ell +1}})$ for every $1\leq \ell \leq r$. It follows that $\cF_i$ and $\cF_j$ coincide in a neighborhood of $x$. Using partitions of unity we deduce that $\cF_i$ and $\cF_j$ coincide in $U_i\cap U_j$, whence there exists a foliation $\cF_M$ whose restriction to $U_i$ is $\cF_i$. The foliation $\cF_M$ is unique in each $U_i$ by the first case, therefore it is unique.
\end{proof}

\begin{definition}\label{dfn:abstrbisubm}
\begin{enumerate}
\item Let $M,N$ be manifolds. An \emph{abstract bi-submersion} between $M$ and $N$ is a triple $(U,\bt,\bs)$, where $U$ is a manifold and $\bs : U \to M$, $\bt : U \to N$ are submersions with connected fibres such that $C^{\infty}(U;\ker d\bs) + C^{\infty}(U;\ker d\bt)$ are foliations, \ie stable under Lie brackets.
\item Let $M, N_1, N_2$ be manifolds. Let $(U_1,\bt_1,\bs_1)$ be a bi-submersion between $M$ and $N_1$, and $(U_2,\bt_2,\bs_2)$ a bi-submersion between $M$ and $N_2$. For every $i = 1,2$ denote $\cF_i$ the foliation $C^{\infty}(U_i;\ker d\bs_i) + C^{\infty}(U_i;\ker d\bt_i)$ of $U_i$. Put $U = U_1 \times_M U_2$  and let $p_i : U \to U_i$ be the projection. The bi-submersions $(U_1,\bt_1,\bs_1)$ and $(U_2,\bt_2,\bs_2)$ are said to be \emph{compatible} if $p_1^{-1}(\cF_1) = p_2^{-1}(\cF_2)$.
\end{enumerate}
\end{definition}

By Lemma \ref{lem:submfol}, an abstract bi-submersion defines a foliation on $\bs(U)$ and on $\bt(U)$. Also, by uniqueness in Lemma \ref{lem:submfol}, two bi-submersions $(U_1,\bt_1,\bs_1)$ and $(U_2,\bt_2,\bs_2)$ are compatible if and only if the foliations they define agree on   $\bs_1(U_1)\cap \bs_2(U_2)$. The following proposition follows thanks to item \eqref{fol:restriction} in \S \ref{subsection:fol}.

\begin{prop}\label{prop:bisubmfol}
Let $M$ and $\{N_i\}_{i\in I}$ be manifolds. For every $i \in I$ let $(U_i,\bt_i,
\bs_i)$ be an abstract bi-submersion between $M$ and $N_i$. Suppose that:
\begin{enumerate}
\item $\bigcup_{i \in I}\bs_i(U_i) = M$, and
\item for every $i,j \in I$ the bi-submersions $(U_i,\bt_i,\bs_i)$ and $(U_j,\bt_j,\bs_j)$ are compatible.
\end{enumerate}
Then there exists a unique   unique foliation $\cF$ on $M$ such that for all $i \in I$, $$\bs_{i}^{-1}(\cF) = C^{\infty}(U_i;\ker d\bs_i) + C^{\infty}(U_i;\ker d\bt_i).$$
\end{prop}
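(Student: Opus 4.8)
The plan is to reduce the global statement to a gluing problem for foliations on the open cover $\{\bs_i(U_i)\}_{i\in I}$ of $M$, using Lemma~\ref{lem:submfol} locally and item~\eqref{fol:restriction} of \S\ref{subsection:fol} globally. First I would observe that for each $i$ the image $O_i:=\bs_i(U_i)$ is open in $M$, since submersions are open maps. Writing $\cF_i:=C^{\infty}(U_i;\ker d\bs_i)+C^{\infty}(U_i;\ker d\bt_i)$ for the foliation on $U_i$, the corestriction $\bs_i\colon U_i\to O_i$ is a surjective submersion with connected fibres, and $C^{\infty}(U_i;\ker d\bs_i)\subseteq \cF_i$ by construction. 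Hence Lemma~\ref{lem:submfol} applies and yields a unique foliation $\cG_i$ on $O_i$ with $\bs_i^{-1}(\cG_i)=\cF_i$.

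Next I would assemble these local foliations. By hypothesis~(1) the open sets $\{O_i\}_{i\in I}$ cover $M$. By hypothesis~(2) each pair $(U_i,\bt_i,\bs_i)$, $(U_j,\bt_j,\bs_j)$ is compatible, and, as recorded in the discussion preceding the statement, compatibility is equivalent to the agreement of the induced foliations on the overlap $\bs_i(U_i)\cap \bs_j(U_j)=O_i\cap O_j$; that is, $\cG_i$ and $\cG_j$ restrict to the same foliation on $O_i\cap O_j$. Item~\eqref{fol:restriction} of \S\ref{subsection:fol} then produces, via a partition of unity, a unique foliation $\cF$ on $M$ whose restriction to each $O_i$ is $\cG_i$.

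It remains to check the defining identity and uniqueness. The key point is that the pullback $\bs_i^{-1}(\cF)$ depends only on the restriction of $\cF$ to $O_i=\bs_i(U_i)$, since $\bs_i$ takes values in $O_i$; as $\cF|_{O_i}=\cG_i$, we get $\bs_i^{-1}(\cF)=\bs_i^{-1}(\cG_i)=\cF_i$, which is exactly the asserted equation. For uniqueness, if $\cF'$ is any foliation on $M$ with $\bs_i^{-1}(\cF')=\cF_i$ for all $i$, then $\bs_i^{-1}(\cF'|_{O_i})=\cF_i=\bs_i^{-1}(\cG_i)$, so the uniqueness clause of Lemma~\ref{lem:submfol} forces $\cF'|_{O_i}=\cG_i=\cF|_{O_i}$ for every $i$; since the $O_i$ cover $M$, this gives $\cF'=\cF$. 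I do not expect a serious obstacle in this argument, which is purely a matter of combining the two already-available tools. The only genuinely delicate step will be the translation of the abstract notion of compatibility into overlap-agreement of the $\cG_i$ (handled by the remark preceding the proposition) together with the locality of the operation $\bs_i^{-1}(\cdot)$, which is precisely what guarantees that the glued foliation restricts back correctly along each source map.
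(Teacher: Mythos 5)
Your proposal is correct and follows exactly the route the paper intends: the paper's own ``proof'' is just the sentence preceding the statement (each abstract bi-submersion defines a foliation on $\bs_i(U_i)$ via Lemma~\ref{lem:submfol}, and compatibility is equivalent to agreement of these foliations on overlaps) together with the gluing device of item~\eqref{fol:restriction} of \S\ref{subsection:fol}. You have simply spelled out that one-line argument in full, including the locality of $\bs_i^{-1}(\cdot)$ and the uniqueness check, all of which is sound.
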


Proposition \ref{prop:bisubmfol} shows that a foliation can be defined from abstract bi-submersions. Notice that our definition of an abstract bi-submersion (Def. \ref{dfn:abstrbisubm}) actually uses the notion of foliation. It is possible to abstract further the definition of a bi-submersion and completely do away with the notion of foliation in the definition, but this is far beyond the scopes of this sequel.

\subsection{Morphisms of bi-submersions and bisections}\label{subsection:morbisubbis}

Let us now recall from \cite{AndrSk} the notion of a morphism of bi-submersions, as well as the notion of a bisection. We will be using both of them throughout this paper. 

\begin{enumerate}
    \item Let $(U,\bt_{U},\bs_{U})$ and $(V,\bt_{V},\bs_{V})$ be two bi-submersions. A {\em{morphism of bi-submersions}} is a smooth map $f : U \longrightarrow V$ such that $\bs_{V} \circ f = \bs_{U}$ and $\bt_{V} \circ f = \bt_{U}$.
    \item A {\em{bisection}} of $(U,\bt,\bs)$ is a locally closed submanifold $V$ of $U$ on which the restrictions of $\bs$ and $\bt$ are diffeomorphisms to open subsets of $M$. We say that $V$ is an {\em{identity bisection}} if $\bs|_V=\bt|_V$. A bisection is necessarily the image of a smooth map $\psi : M_0 \to U$ such that $\bs \circ \psi=Id_{M_0}$, where $M_0$ is an open subset of $M$. In case $(U,\bt,\bs)$ is a path holonomy bi-submersion $U\subseteq M \times \R^n$, a bisection is necessarily the graph of a smooth map $\phi : M_0 \to \R^n$. We will often switch freely between a bisection and the corresponding maps.     
    \item We say that $u \in U$ {\em{carries}} the foliation-preserving local diffeomorphism $\psi$ if there is a bisection $V$ such that $u \in V$ and $\psi = \bt\mid_V \circ (\bs\mid_V)^{-1}$. For instance, if $U\subseteq M \times \R^n$ is a path holonomy bi-submersion which is minimal at $(x,0)$ and $\phi : V \to \R^n$ a smooth map which vanishes at $x$ then $(x,0)$ carries the local diffeomorphism $y \mapsto exp_y (\sum_{i=1}^n \phi_i(y)X_i)$. ({Notice that in general this local  diffeomorphism is not the time-1 flow of a vector field in any obvious way, due to the dependence of $\phi_i$ on $y$.)}
        Putting $\phi$ the constant zero map, we see that $(x,0)$ carries the identity as well.  
    \item\label{itd} It was shown in  \cite[Cor. 2.11(b)]{AndrSk} that if $\{(U_i,\bt_i,\bs_i)\}_{i\in I}$ are bi-submersions, $i = 1,2$ then $u_1 \in U_1$ and $u_2\in U_2$  carry the same local diffeomorphism iff there exists a morphism of bi-submersions $g$ defined in an open neighborhood of $u_1 \in U_1$ such that $g(u_1) = u_2$. Such a morphism maps every bisection $V$ of $U_1$ at $u_1$ to a bisection $g(V)$ of $U_2$ at $u_2$.
\end{enumerate}

\begin{ep}\label{ep:1.3}
For the circle action on $\R^2$ we saw that a path holonomy bi-submersion is given by $U \subseteq \R^2 \times \R$ as in example \ref{eps:1.2}(i). The action groupoid $S^1 \ltimes \R^2$ is also a  bi-submersion.  The familiar exponential map $\R \to S^1$ provides a (locally defined) map $U \to S^1 \ltimes \R^2$ by $(\vec{\lambda},\theta) \mapsto (e^{i\theta},\vec{\lambda})$. This is a morphism of bi-submersions.
\end{ep}

The following Lemma is a special case of \cite[Prop. 2.10 b)]{AndrSk}.
We include a proof, along the lines of that of \cite[Prop. 2.10 b)]{AndrSk} but more explicit. Recall 
that the latter states that if $U,U'$ are bi-submersions with $x\in \sharp U$ and $U'$ is a   path holonomy bi-submersion  minimal at $x$, then -- shrinking $U$ if necessary --  there exists a morphism of bi-submersions $\Phi \colon U \to U'$ such that $(x,0)$ lies in its image, and   furthermore $\Phi$ is a submersion.

\begin{lemma}\label{lemma:unique2}
Fix $x \in M$ and consider vector fields $X_1,\cdots,X_n$ and $X'_1,\cdots,X'_n$ in $\cF$ whose images $\{[X_1],\cdots,[X_n]\}$ and $\{[X'_1],\cdots,[X'_n]\}$ form bases of $\cF_x$. Let $(U,\bt,\bs)$ and $(U',\bt',\bs')$ be the corresponding path holonomy bi-submersions. There exists a morphism $\Phi : U \to U'$ defined in a neighborhood of $(x,0)$ and fixing $(x,0)$, which is a diffeomorphism onto its image.
\end{lemma}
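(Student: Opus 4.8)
The plan is to reduce the statement to the invertibility of a single linear map and then to exhibit a morphism whose differential is manifestly invertible. Both $U$ and $U'$ are minimal at $(x,0)$, so $\dim U=\dim M+\dim\cF_x=\dim U'$; consequently it suffices to construct a morphism $\Phi$ with $\Phi(x,0)=(x,0)$ whose differential $d\Phi_{(x,0)}$ is an isomorphism, for then $\Phi$ is a local diffeomorphism and hence a diffeomorphism onto its image near $(x,0)$. Writing $\Phi(y,\xi)=(y,\phi(y,\xi))$ with $\phi\colon U\to\R^n$, the condition $\bs'\circ\Phi=\bs$ holds automatically (the first component is $y$), while $\bt'\circ\Phi=\bt$ is the flow equation $exp_y\!\big(\sum_j\phi_j(y,\xi)X'_j\big)=exp_y\!\big(\sum_i\xi_iX_i\big)$, and fixing $(x,0)$ means $\phi(x,0)=0$.

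For existence I would follow the construction in the proof of \cite[Prop.\ 2.10 b)]{AndrSk}, specialised to the case where the source $U$ is itself a path holonomy bi-submersion: one solves the flow equation for $\phi$ with initial value $\phi(x,0)=0$. (Existence of \emph{some} morphism fixing $(x,0)$ is immediate from item \eqref{itd}, since both $(x,0)\in U$ and $(x,0)\in U'$ carry the identity local diffeomorphism.) Differentiating the flow equation at $(x,0)$ shows that, with respect to the splitting $T_{(x,0)}U=T_xM\oplus\R^n$, the differential $d\Phi_{(x,0)}$ is block lower triangular, $\begin{pmatrix}\id_{T_xM}&0\\ \ast&B\end{pmatrix}$, with $B=\partial_\xi\phi(x,0)$ subject to $\sum_jB_{ja}X'_j(x)=X_a(x)$ for every $a$. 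Since the two families induce bases of $\cF_x$, there is an invertible matrix $C$ with $X_i=\sum_jc_{ij}X'_j \bmod I_x\cF$, whence $X_a(x)=\sum_jc_{aj}X'_j(x)$, and the relation above reads $B\equiv C^{\top}$ modulo the space $\cR=\{\lambda\in\R^n:\sum_j\lambda_jX'_j(x)=0\}$.

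Here lies the crux, and the step I expect to be the main obstacle: the morphism equation pins $B$ down only modulo $\cR$, and $\dim\cR=\dim\g_x$ equals the number of \emph{isotropy} directions, so invertibility of $d\Phi_{(x,0)}$ genuinely depends on the choice of $\phi$ and is not guaranteed by the morphism conditions alone. The task is therefore to produce $\phi$ realising the invertible representative $B=C^{\top}$ of its class in $\R^n/\cR$; equivalently, to realise the submersion asserted in \cite[Prop.\ 2.10 b)]{AndrSk}, which between manifolds of equal dimension is exactly a local diffeomorphism. I would secure this via the splitting theorem (Prop.\ \ref{transversal}): in splitting coordinates $(W,\cF_W)\cong(I^k,TI^k)\times(S,\cF_S)$ the regular $I^k$-directions contribute the invertible block $\id$ automatically, while on the transversal $S$ every generator of $\cF_S$ vanishes at $x$, so the isotropy part of the flow equation is, to first order at $y=x$, vacuous, and one is free to prescribe the isotropy block of $B$ to match $C^{\top}$. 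With $B=C^{\top}$ invertible, $d\Phi_{(x,0)}$ is an isomorphism and $\Phi$ is a diffeomorphism onto its image. The remaining parts — the block-triangular computation of the differential and the dimension count — are routine.
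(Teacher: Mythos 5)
Your reduction is sound as far as it goes: minimality gives $\dim U=\dim U'$, the source condition forces $\Phi(y,\xi)=(y,\phi(y,\xi))$, the differential at $(x,0)$ is block triangular with vertical block $B=\partial_\xi\phi(x,0)$, and differentiating the target condition determines $B$ only modulo $\cR=\{\lambda\in\R^n:\sum_j\lambda_jX'_j(x)=0\}$, with the invertible $C^{\top}$ as one representative. You have also correctly located the crux. But the step you offer to resolve it does not close the gap. The claim that ``the isotropy part of the flow equation is, to first order at $y=x$, vacuous, and one is free to prescribe the isotropy block of $B$'' only concerns the constraint at the single point $y=x$. A morphism must satisfy $exp_y(\sum_j\phi_j(y,\xi)X'_j)=exp_y(\sum_i\xi_iX_i)$ for \emph{all} $y$ near $x$; at nearby points the isotropy is generically smaller, so $\phi(y,\cdot)$ is more rigidly constrained there, and by smoothness those constraints propagate back to $(x,0)$. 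For $\cF=\langle x\partial_x\rangle$ on $\R$ at $x=0$, for instance, the first-order constraint at $0$ is entirely vacuous ($\cR=\R$), yet the equation forces $\phi(y,\xi)=\xi$ for $y\neq0$ and hence everywhere -- so there is in fact \emph{no} freedom to prescribe $B$; it is determined by the behaviour of $\phi$ away from $x$. What your argument needs, and never produces, is a single smooth $\phi$ solving the nonlinear equation on a whole neighbourhood of $(x,0)$ whose vertical derivative at $(x,0)$ is invertible. (Citing \cite[Prop.\ 2.10 b)]{AndrSk} wholesale would be legitimate, since the Lemma is stated as a special case of it; but your proposal attempts to re-derive the submersion property, and that re-derivation is where the gap sits.)

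The paper avoids the ``prescription'' issue by an explicit construction in which the invertible block is built in from the start. Choose vertical lifts $Y_i\in C^{\infty}(U;\ker d\bs)$ of the $X_i$ through $\bt$, and $Y'_l\in C^{\infty}(U';\ker d\bs')$ of the $X'_l$ through $\bt'$. Since $\{X'_l\}$ generates $\cF$ near $x$, one can write $X_i=\sum_l c_{il}X'_l$ with \emph{smooth functions} $c_{il}$, and one sets
$\Phi\big(exp_{(y,0)}(\sum_i\lambda_iY_i)\big)=exp_{(y,0)}\big(\sum_{i,l}\lambda_i\,\bt'^*(c_{il})Y'_l\big)$.
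This map is manifestly smooth, the source and target conditions are verified directly from $\bt'$-relatedness of $\sum_l\bt'^*(c_{il})Y'_l$ and $X_i$, and its differential sends $Y_i|_{(x,0)}$ to $\sum_l c_{il}(x)Y'_l|_{(x,0)}$. Invertibility then follows from the invertibility of $[c_{il}(x)]$ together with a short separate argument (which you would also need in some form) that the vectors $Y'_l|_{(x,0)}$ are linearly independent. Nothing has to be realised a posteriori: the matrix you call $C^{\top}$ appears because the formula for $\Phi$ is written in terms of the $c_{il}$ themselves.
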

\begin{proof}
Let $\{Y_i\}$ be vertical lifts of the $\{X_i\}$ w.r.t. the target  map $\bt$ of the bi-submersion $U$, and $\{{Y'}_i\}$ vertical lifts of
of the $\{{X'}_i\}$ w.r.t. the target  map $ {\bt}'$ of the bi-submersion $U'$. Since $\{X_i'\}$ is a generating set for $\cF$ in a neighborhood of $x$  (which we will still denote by $M$ abusing notation) \cite[Prop 1.5(a)]{AndrSk},
there exists smooth functions $\{c_{il}\}_{i,l\le n}$ satisfying $X_i=\sum_l c_{il}X_l'$ for all $i$. In particular, 
$\sum_i k_i [X_i]=\sum_{i,l} k_i c_{il}(x)[X'_l]\in \cF_x$. We define $\Phi$ by 
\begin{eqnarray}\label{Phi}
\exp_{(y,0)}(\sum_i \lambda_iY_i)&\mapsto& \exp_{(y,0)}\Big(\sum_{i,l}  \lambda_i \bt'^*(c_{il})Y'_l\Big).
\end{eqnarray}
Since $Y_i$ and $Y'_i$ are vertical vector fields, they lie in the kernels of the respective source maps, which are just the projections onto $M$. Hence we have $\bs'\circ \Phi=\bs$. On the other hand, notice that $\sum_l \bt'^*(c_{il})Y'_l$ maps under $\bt'$ to $\sum_l c_{il}X'_l=X_i$, so $exp_{(y,0)}(\sum_{i,l}  \lambda_i \bt'^*(c_{il})Y'_l)$ maps under $\bt'$ to $\exp_{y}(\sum_i \lambda_iX_i)$, which is the image under $\bt$ of $\exp_{(y,0)}(\sum_i \lambda_iY_i)$. This shows that $\bt'\circ \Phi=\bt$ and hence that $\Phi$ is a morphism of bi-submersions. 

To show that it is a diffeomorphism consider its derivative $(\Phi_*)_{(x,0)}$. It maps $Y_i|_{(x,0)}$ to $\sum_{l} c_{il}(x)Y'_l|_{(x,0)}$. Since  $ \{[X_i]\}_{i\le n}=\{\sum_{l} c_{il}(x)[X'_l]\}_{i\le n}$ and $\{[X'_i]\}_{i\le n}$ are both bases of $\cF_x$, the matrix $[c_{il}(x)]_{il}$ is invertible. 

We claim that the $\{Y'_i|_{(x,0)}\}_{i\le n}$ are linearly independent (this immediately implies   that
 $\Phi$ is a local diffeomorphism near $x$ and concludes the proof). 
 Indeed, assume that the constant linear combination $Z:=\sum_{i\le n} a_iY'_i$ vanishes at $(x,0)\in U'$. Choose coordinates along the fibers of $\bs \colon U' \to M$, giving rise to vertical coordinate vector fields $\{\partial_i\}_{i\le n}$. Then $Z=\sum_{i\le n} b_i\partial_i$ for unique $b_i\in C^{\infty}(U')$, which satisfy $b_i(x,0)=0$. We have $\bt_*(Z|_M)=\sum_{i\le n} {b_i}|_M \bt_*({\partial_i}|_M) \in I_x\cF$. (Here we use $\bt_*({\partial_i}|_M) \in \cF$, a direct consequence of the fact that $U'$ is a bi-submersion.) So we have $[\bt_*(Z|_M)]=0\in \cF_x=\cF/I_x\cF$.
At the same time we have  $\bt_*(Z|_M)=\sum_{i\le n} a_iX'_i$, which implies $[\bt_*(Z|_M)]=\sum_{i\le n} a_i [X'_i]$. The linear independence of $\{[X'_i]\}_{i\le n}$ implies that $a_i=0$ for all $i$, proving the claim.
\end{proof}

\subsection{The holonomy groupoid}\label{sec:groatlas}

We end this section with a recollection of the construction of the holonomy groupoid given in \cite{AndrSk} and give a few examples.

Let $\cU = \big((U_{i},\bt_{i},\bs_{i})\big)_{i \in I}$ be a family of bi-submersions. Recall from \cite{AndrSk} that a bi-submersion $(U,\bt,\bs)$ is \emph{adapted} to $\cU$ if for all $u \in U$ there exists an open subset $U' \subset U$ containing $u$, an $i \in I$, and a morphism of bi-submersions $U' \to U_{i}$. We say that $\cU$ is an \textit{atlas} if
\begin{enumerate}
\item $\bigcup_{i \in I}\bs_i(U_{i}) = M$.
\item The inverse of every element in $\cU$ is adapted to $\cU$.
\item The composition $U \circ V$ of any two elements in $\cU$ is adapted to $\cU$.
\end{enumerate}

An atlas $\cU' = \{(U'_{j},\bt_{j},\bs_{j})\}_{j \in J}$ is adapted to $\cU$ if every element of $\cU'$ is adapted to $\cU$. We say $\cU$ and $\cU'$ are \textit{equivalent} if they are adapted to each other.   Given a foliation $(M,\cF)$  \textit{the path holonomy atlas} is the one 
generated by all the path holonomy bi-submersions. (The latter were defined in  \S \ref{subsection:bisubmersions} \ref{phbisubm}.) In other words,  {the path holonomy atlas} consists of finite compositions of path holonomy bi-submersions and their inverses.
 
The groupoid of an atlas $\cU = \big((U_{i},\bt_{i},\bs_{i})\big)_{i \in I}$ is the quotient 
$$G(\cU) :=\coprod_{i\in I}U_i/\sim$$ by the equivalence relation   for which $u\in U_i$ is equivalent to $u'\in U_j$ if there is a morphism of bi-submersions $f:W\to U_j$ defined in a neighborhood $W\subset U_i$ of $u$ such that $f(u)=u'$. 
We denote the canonical quotient map by
$\sharp : \coprod_{i\in I}U_i  {\to} G(\cU).$

 The groupoid of the path holonomy atlas associated to $\cF$ is the \textit{holonomy groupoid} of the foliation $\cF$. It depends on the choice of module $\cF$ rather than the partition to leaves (see Ex. \ref{eps:linact} below). We denote the holonomy groupoid of $\cF$ by $H(\cF)$ or by $H$ (when the foliation $\cF$ is understood).

\begin{eps}\label{eps:linact}
\begin{enumerate}
\item[(i)]  Let $(M,\cF)$ be a foliation and $S$ a transversal to a leaf. Consider the foliation $(S,\cF_S)$ introduced in   \S \ref{transversal}. Its holonomy groupoid is the restriction of the holonomy groupoid of $(M,\cF)$ on $S$, namely $H(S,\cF_S) = H(M,\cF)_S^S$. Notice that if $L$ is a singular leaf then the isotropy groups $H(M,\cF)^x_x$ may not be discrete. Due to Remark \ref{isogx}, this constitutes a difference from the regular case, where the restriction of the holonomy groupoid to a transversal is always \'{e}tale\footnote{A very common technique in the study of regular foliations is to exploit the fact that the holonomy groupoid is always Morita equivalent to an \'{e}tale one (its restriction to a transversal). For instance, in \cite{CM} it plays the crucial role in the extraction of explicit formulas for characteristic classes of regular foliations. Our observation in this example shows that this device cannot be used for singular foliations.}.
\item[(ii)]\label{ex:onevf} Let $X$ be a complete vector field on $M$ and $\cF=\langle X \rangle$. Assume that, for all $x\in \partial \{X=0\}$, every neighborhood of $x$ contains at least one point   the integral curve through which is not periodic, where $\{X=0\}$ denotes the vanishing set of $X$. Then  
$$H(X) =H(X)|_{\{X\neq0\}}\cup \text{Int}\{X=0\} \cup  (\R\times \partial \{X=0\}),$$
where $H(X)|_{\{X\neq0\}}$ is the (smooth) holonomy groupoid of the regular 1-dimensional foliation $\cF = \langle X \rangle$ on $\{X\neq0\}$, and the groupoid structure  on $\R\times \partial \{X=0\}$ is given by addition.

Indeed, the flow of $X$ induces an action of the abelian group $\R$ on $M$. The transformation groupoid $G:=\R \ltimes M \rightrightarrows M$ is a bi-submersion and an atlas for $(M,\cF)$, hence  $H$ is a quotient for the transformation groupoid (see \cite[Ex. 3.4(4)]{AndrSk}). If $x\in \text{Int}\{X=0\}$ is an interior point of $\{X=0\}$, then any two any points $(\lambda,x)$ and $(\eta,x)$ of $G$ are related by a morphism of bi-submersions defined locally, namely the fibre translation by $\eta-\lambda$. If $x$ is a boundary point of $\{X=0\}$, then no morphism of bi-submersions relating distinct points $(\lambda,x)$ and $(\eta,x)$ can exist. If it existed, it would relate distinct points of $\R \times \{z\}$ for all $z$ lying in some neighborhood $U$ of $x$. This would contradict our assumption that there exist a  $y\in U$ such that the target map of $G$  is injective on $\R \times \{y\}$.

\item[(iii)] Consider the foliation $\cF$ on $\R^2$ induced from the   action of $S^1$ by rotations, i.e. $\cF=\langle x\partial_y - y\partial_x\rangle$. (Notice that this vector field does not satisfies the assumption made in ii) above.)
Then $H$ coincides with the transformation groupoid $S^1 \times \R^2 \rightrightarrows \R^2$. Indeed the latter is a bi-submersion and an atlas, so $H$ is a quotient of it. We know that the quotient map restricted to $S^1 \times \{0\}$ is injective  from \cite[Prop. 1.4]{AndrSk}, and further on $\R^2/\{0\}$ the foliation is regular with trivial holonomy, so that $H|_{\R^{2}/\{0\}}=S^1 \times \R^2/\{0\}$.

\end{enumerate}
\end{eps}
 

\subsection{\texorpdfstring{Smoothness}{Smoothness}}\label{subsubsection:Htop}

 We put here a few clarifications about the notion of smoothness for the holonomy groupoid, following \cite[Remark 3.13, Def. 3.14]{AndrSk}. 
 
First, let us recall the topology of the holonomy groupoid:
 $H$ is endowed with the quotient topology, namely the finest topology which makes each quotient map $\sharp \colon U\to H$ continuous for each bi-submersion   in the path holonomy atlas. Note that with this topology $\sharp U$ is open in $H$: The pre-image $\sharp^{-1}(\sharp U)$ consists of points $y$ lying in  bi-submersions $U_{\alpha}$ of the path holonomy atlas such that there exists a morphism of bi-submersions $V_{\alpha} \to U$ for some open neighborhood $V_{\alpha}$ of $x$ in $U_{\alpha}$.  Hence $\sharp^{-1}(\sharp U)$ is open in $ \cup_{\alpha} U_{\alpha}$, and by the definition of quotient topology it follows that $\sharp U$ is open in $H$. 


In this sequel we will be concerned with the restriction $H_L = \bs^{-1}(L) = \bt^{-1}(L)$ of $H$ to a leaf $L$, as well as the $\bs$-fiber $H_x = \bs^{-1}(x)$ and the isotropy group $H_x^x = \bs^{-1}(x) \cap \bt^{-1}(x)$, where $x\in L$. 

\begin{definition}\label{dfn:HLtop}
Let $\cF$ be a singular foliation and $L$ a leaf.
 We say that $H_L$ is \emph{smooth} if there exists a differentiable  structure on it such that
 $\sharp \colon U_L \to H_L$ a submersion for all  bi-submersions $(U,\bt,\bs)$ in the path holonomy atlas, where $U_L:=\bs^{-1}(L)$. (If such a differentiable structure    exists, it is unique.)
\end{definition}


Similarly, when we talk about the smooth structure of $H_x$, we refer to the  differential structure (unique if it exists) such that 
$\sharp \colon U_x:=\bs^{-1}(x) \to H_x$ a submersion for all  bi-submersions $U$ in the path holonomy atlas. The same applies to   $H_x^x$.

\begin{lemma}\label{lem:smooth}
If $H_L$ is smooth (in the sense of Def. \ref{dfn:HLtop}), then it is a Lie groupoid.
\end{lemma}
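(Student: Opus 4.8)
The plan is to show that the smooth structure on $H_L$ (whose existence is hypothesized in Definition \ref{dfn:HLtop}) is compatible with all the groupoid structure maps, so that $H_L \rra L$ becomes a Lie groupoid in the usual sense. Since $H_L = \bs^{-1}(L) = \bt^{-1}(L)$, the unit space is the leaf $L$, which carries its longitudinal smooth manifold structure; recall that $L$ is a (possibly immersed) manifold. The key technical fact to exploit is that each quotient map $\sharp \colon U_L \to H_L$ is by hypothesis a submersion, and the $\sharp U_L$ cover $H_L$ and are open (by the discussion of the quotient topology preceding Definition \ref{dfn:HLtop}). Thus the bi-submersions furnish an atlas of local "charts" (up to the submersion $\sharp$) through which every structure map can be studied.

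First I would verify that the source and target maps $\bs, \bt \colon H_L \to L$ are submersions. This is immediate: on each bi-submersion $U$ the maps $\bs_U, \bt_U \colon U_L \to L$ are submersions (being restrictions of the defining submersions of the bi-submersion), and they factor as $\bs_U = \bs \circ \sharp$, $\bt_U = \bt \circ \sharp$. Since $\sharp$ is a surjective submersion onto the open set $\sharp U_L$, the maps $\bs$ and $\bt$ are submersions on each such open set, hence globally. Next I would treat the unit map $L \to H_L$ and the inversion $H_L \to H_L$. The unit is smooth because the identity bisection of a path holonomy bi-submersion provides a smooth section of $\sharp$ along $L$; inversion is smooth because it is induced by the inverse bi-submersion construction (item (3) of \S\ref{subsection:bisubmersions}), which on the level of bi-submersions is the smooth swap $(U, \bt, \bs) \mapsto (U, \bs, \bt)$ and descends through $\sharp$.

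The main obstacle, and the step I would spend the most care on, is smoothness of the multiplication $m \colon H_L \times_{\bs,\bt} H_L \to H_L$. The natural strategy is to use the composite bi-submersion $U \circ V = U \times_{\bs_U, \bt_V} V$ from item (3) of \S\ref{subsection:bisubmersions}: the quotient maps organize into a commutative square in which $\sharp_{U \circ V} \colon (U \circ V)_L \to H_L$ covers the product $\sharp_U \times \sharp_V$ followed by $m$, since composing arrows in $H_L$ is exactly the descent of the concatenation of bisections. Because $\sharp_U \times \sharp_V$ is a surjective submersion onto an open subset of the fibered product $H_L \times_{\bs,\bt} H_L$, and $\sharp_{U\circ V}$ is a submersion with $m \circ (\sharp_U \times \sharp_V) = \sharp_{U \circ V}$ on the relevant domain, smoothness of $m$ on that open set follows from the universal property of submersions (a map out of the base of a surjective submersion is smooth iff its pullback is). Covering $H_L \times_{\bs,\bt} H_L$ by such open sets coming from composites of atlas elements then gives global smoothness of $m$.

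One technical point worth flagging is that, to apply the "descent through a surjective submersion" principle, I must know that $H_L \times_{\bs, \bt} H_L$ is itself a smooth manifold, i.e. that the fibered product is transverse; this is guaranteed precisely because $\bs$ (equivalently $\bt$) is a submersion, which I establish in the first step. I would also note that the openness of each $\sharp U_L$ in $H_L$ (established before Definition \ref{dfn:HLtop}) is what lets me patch the locally defined smooth structure maps into globally smooth ones. Granting these, all the Lie groupoid axioms are verified at the level of smoothness, and since the algebraic groupoid identities already hold (they are inherited from the groupoid $G(\cU)$), we conclude that $H_L \rra L$ is a Lie groupoid.
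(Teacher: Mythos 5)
Your proposal is correct and follows essentially the same route as the paper: the paper likewise uses the composite bi-submersion $(U_1\circ U_2)_L$ and the commuting triangle over $(H_L)_{\bs}\times_{\bt}(H_L)$ to descend the multiplication through the surjective submersion $\sharp\times\sharp$ (realized there via local sections transverse to the $\sharp$-fibers), and establishes that $\bs,\bt$ are submersions by comparing the $\sharp$-fibers with the $\bt$-fibers on $U_L$. Your explicit flagging that the submersivity of $\bs$ must come first so that the fibered product is a manifold is a sensible ordering refinement, but not a different argument.
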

\begin{proof}
First notice that the topology underlying the differentiable structure
of $H_L$ is exactly the quotient topology discussed above (this is a consequence of the fact that submersions are open maps).

If $U_1$ and $U_2$ are bi-submersions in the path holonomy atlas, then the diagram
\begin{equation*}
\xymatrix{
(U_1 \circ U_2)_L \ar[d]^{\sharp \times \sharp}\ar[dr]^{\sharp } 
&\\
(H_L) _{\bs}\times_{\bt} (H_L) \ar[r]^{\;\;\;\;\;\;\;\;\;mult.}& {H_L} }
\end{equation*}
commutes by the definition of the multiplication on $H_L$. Since $\sharp \times \sharp$ is a submersion, through every point of $(U_1 \circ U_2)_L$ we can find a submanifold $S$ such that $(\sharp \times \sharp)|_S$ is a diffeomorphism onto its image. Hence  the bottom map $mult$ is smooth, as  locally it can be written as $\sharp \circ ((\sharp \times \sharp)|_S)^{-1}$. Similarly one shows that the inversion, the embedding of the identity section, the source map and the target map are smooth.

To show that the target map of $H_L$ is a submersion, take a bi-submersion $U$ in the path holonomy atlas and consider
submanifolds $T\subset U_L $ transverse to the fibers of the submersion $\sharp \colon U_L \to H_L$ (so $\sharp|_T$ is a diffeomorphism onto its image). Since the $\sharp$-fibers are contained in the fibers of $\bt \colon U_L \to L$ (by definition of the equivalence relation $\sim$) and $\bt$ is a submersion, it follows that $\bt|_T \colon T \to L$ is a submersion too. Hence the target map of $H_L$ is a submersion. For the source map one proceeds similarly.
 \end{proof}

\begin{remark} $H_L$ is smooth  for all leaves $L$ if{f} the path holonomy  atlas of $(M,\cF)$, together with  $H$, is a \emph{holonomy pair} in the sense of \cite[Def. 3.14]{AndrSk}. This follows using
Lemma \ref{lem:smooth}.
\end{remark}

To ensure that $H_L$ is smooth, it is sufficient to consider bi-submersions that cover a neighborhood of the identity section $L$: 
 
\begin{prop}\label{prop:allsmooth}
Assume that  for every $x \in L$ there exists a path-holonomy bi-submersion $U$,
minimal at $x$, such that $\sharp \colon U_L \to H_L$ is a submersion. Then $H_L$ is smooth (in the sense of Def. \ref{dfn:HLtop}). 
\end{prop}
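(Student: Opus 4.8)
The plan is to bootstrap from the local smoothness hypothesis (covering a neighborhood of the identity section) to global smoothness of $H_L$ by exploiting the groupoid structure: every arrow in $H_L$ can be translated, via multiplication by an arrow near the identity, into a region where we already know $\sharp$ is a submersion. First I would fix an arbitrary bi-submersion $(W,\bt,\bs)$ in the path holonomy atlas and an arbitrary point $w\in W_L$, and let $h=\sharp(w)\in H_L$ with $\bs(h)=x$, $\bt(h)=y$, both in $L$. The goal is to produce, near $w$, a smooth chart compatible with the differentiable structure coming from the hypothesized minimal bi-submersions at points of $L$.

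The key step is the translation argument. By hypothesis there is a path-holonomy bi-submersion $U$, minimal at $y$, with $\sharp\colon U_L\to H_L$ a submersion; $U$ carries the identity near $(y,0)$, so $\sharp(y,0)$ is the unit $1_y$. I would form the composite bi-submersion $W\circ U^{-1}$ (or $U\circ W$, depending on the bookkeeping of source and target), which is again adapted to the path holonomy atlas by the atlas axioms in \S\ref{sec:groatlas}. Multiplication in the groupoid corresponds, at the level of bi-submersions, to the projection $\sharp$ on the composite, exactly as in the commuting diagram in the proof of Lemma \ref{lem:smooth}. Since $\sharp\colon U_L\to H_L$ is a submersion by hypothesis and $w$ composes with a point of $U$ carrying $1_y$, I can use right-translation by elements of $\sharp(U_L)$ to identify a neighborhood of $h$ in $H_L$ with a neighborhood of $1_y$ together with the $U$-directions. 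Concretely, I would show that $\sharp\colon W_L\to H_L$ is a submersion at $w$ by writing it locally as a composition of the (submersive) map $\sharp$ on the composite bi-submersion with right multiplication by the inverse of a local bisection through the chosen point of $U$; right multiplication by a fixed groupoid element is a diffeomorphism of $H_L$, so submersivity is preserved.

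Once $\sharp\colon W_L\to H_L$ is shown to be a submersion for \emph{every} $W$ in the atlas and every $w\in W_L$, the differentiable structure required by Definition \ref{dfn:HLtop} exists: one declares a set in $H_L$ open (resp. a function smooth) iff its preimage under every such $\sharp$ is open (resp. smooth), and the submersion property guarantees this is a genuine manifold structure, with uniqueness as noted in Definition \ref{dfn:HLtop}. Lemma \ref{lem:smooth} then upgrades $H_L$ to a Lie groupoid, so no separate verification of the structure maps is needed here.

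\textbf{Main obstacle.}
The hard part will be making the translation argument rigorous at the level of bi-submersions rather than on the (possibly badly-behaved, non-smooth) groupoid $H$ itself. One must pass from the composite bi-submersion $W\circ U$ back down to $W$ using a local bisection of $U$ through a point carrying the identity, and check that this descends to a well-defined local diffeomorphism of $H_L$ compatible with $\sharp$; this requires invoking the morphism-of-bi-submersions characterization (item \eqref{itd} in \S\ref{subsection:morbisubbis}) to ensure that the ``right translation'' is realized by an honest morphism and hence respects the equivalence relation $\sim$. The minimality of $U$ at $y$ is what guarantees that the fiber dimensions match so that the translated chart has the correct dimension $\dim L+\dim\g_y$ agreeing across overlaps; keeping these dimension counts consistent, and verifying that the resulting structure is independent of the choices of $U$ and of the bisection, is the technical crux.
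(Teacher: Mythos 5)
Your overall architecture (pass to composite bi-submersions, relate $\sharp$ on a composite to the multiplication of $H_L$ via the commuting diagram of Lemma \ref{lem:smooth}, invoke Lemma \ref{lem:smooth} at the end, and check independence of choices via morphisms of bi-submersions) overlaps with the paper's, but the central step as you state it is circular. You propose to prove that $\sharp \colon W_L \to H_L$ is a submersion for an \emph{arbitrary} atlas element $W$ by factoring it through ``the (submersive) map $\sharp$ on the composite bi-submersion'' $U\circ W$ (or $W\circ U^{-1}$). The hypothesis only gives submersivity of $\sharp$ on the minimal path-holonomy bi-submersions themselves; submersivity of $\sharp$ on any composite involving the unknown $W$ is precisely what has to be proven, so it cannot be invoked. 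Relatedly, before one can even speak of $\sharp\colon W_L\to H_L$ being a submersion near a point $h$ far from the unit section, a differentiable structure must first be \emph{constructed} there, and the hypothesis only provides one on the sets $\sharp(U_L)$, i.e.\ near the units. Your translation is meant to produce that structure, but translating by a bisection and checking independence of the chosen bisection already uses the smoothness of multiplication near the units, which is again part of what is being proven. (A smaller point: right multiplication by a fixed groupoid element is defined only on a single source fibre, not on an open subset of $H_L$; one needs a whole bisection, and the smoothness of the induced map is not free.)

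The paper breaks this circle differently. Since a minimal path-holonomy bi-submersion is isomorphic to its inverse via $(y,\xi)\mapsto (\bt(y,\xi),-\xi)$, every element of the path holonomy atlas is adapted to a finite composite $U^1\circ\dots\circ U^m$ of minimal path-holonomy bi-submersions, and the statement is proved for such composites by induction on $m$ (the case $m=1$ being the hypothesis together with Lemma \ref{lemma:unique2}). The inductive step constructs the differentiable structure on the image of $mult\colon \sharp U^{(m-1)}_L \;_{\bs}\times_{\bt} \sharp U^m_L \to H_L$ by exhibiting an explicit transversal $\{(q\cdot \sigma(\bs(q))^{-1},\sigma(\bs(q)))\}$ to the fibres $mult^{-1}(q)=\{(q\cdot r^{-1},r)\}$, where $\sigma$ is a local section of $\bs$; this transversal is the device your sketch is missing. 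If you insist on the translation-by-bisections formulation, you would still need an analogue of this construction (or of Lemma \ref{splitmor}) to show that the restriction of the submersive morphism $W\circ W^{-1}\to U'$, which exists near identity-carrying points by \cite[Prop.\ 2.10 b)]{AndrSk}, to the graph $\{(w'',\iota(V(\bs(w''))))\}$ of a bisection $V$ through $w$ remains a submersion onto $U'$; that is exactly where the dimension counts you allude to have to be verified, and it does not follow from the items you cite.
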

\begin{proof} 
First notice that  a path holonomy bi-submersion minimal at $x$, say  $(U\subset M\times \R^n, \bt,\bs)$, is isomorphic to its inverse $U^{-1}$, by the map $$U\to U^{-1}, (y,\xi)\mapsto (\bt(y,\xi),-\xi).$$ Hence in the following we will consider only products of   path holonomy bi-submersions (and not of their inverses). It suffices to prove the following claim:

 Claim: \emph{Let $U^1,\dots,U^m$ minimal path-holonomy  bi-submersions.  Then there exists a differential structure on the image of  \begin{equation*}
\sharp \colon U^1_L \circ \dots \circ U^m_L\to H_L
\end{equation*}
 such that the above map $\sharp$ is a submersion.}

We prove the claim by induction. The case $m=1$ holds by assumption together with Lemma \ref{lemma:unique2}.  Assume the claim holds for $m-1$, and use the short-hand notation $U^{(m-1)}_L:=U^1_L \circ \dots \circ U^{m-1}_L$. 
By the induction assumption, there exists a smooth structure on 
 $\sharp  U^{(m-1)}_L$ such that $\sharp \colon U^{(m-1)}_L \to H_L$ is a submersion, and similarly for $\sharp  U^{m}_L$.

Consider the following restriction of the multiplication on $H_L$:
\begin{equation*}
mult\colon \sharp U^{(m-1)}_L \;_{\bs}\times_{\bt} \sharp U^m_L \to H_L.
\end{equation*}
For any $q$ in the image of $mult$ we have $$mult^{-1}(q)=\{(q\cdot r^{-1},r):r\in \bs^{-1}(\bs(q))\cap \sharp U^m_L\} .$$
Notice that $\bs^{-1}(\bs(q))\cap \sharp U^m_L$ is an embedded submanifold of  $\sharp U^m_L$, as $\bs \colon \sharp U^m_L \to L$ is a submersion by the proof of Lemma \ref{lem:smooth}.  
Let $\sigma  $ be  a smooth local section of $\bs$.
 Then $$\{(q\cdot \sigma(\bs(q))^{-1},\sigma(\bs(q))): q\in \text{image}(mult)\}$$
is submanifold transverse to the fibers of $mult$, and induces a differentiable structure on $\text{image}(mult)\subset H_L$ such that $mult$ is a submersion. The commuting diagram
\begin{equation*}
\xymatrix{
U_L^{(m-1)} \circ U_L^m \ar[d]^{\sharp \times \sharp}\ar[dr]^{\sharp } 
&\\
\sharp U_L^{(m-1)}  \;_{\bs}\times_{\bt} \sharp U_L^{m}  \ar[r]^{\;\;\;\;\;\;\;\;\;\;\;mult.}& {H_L} }
\end{equation*}
and the induction assumption imply that the diagonal map $\sharp$ is a 
submersion, proving the claim.

 The differentiable structure on an open subset of $H_L$ defined in the above claim is independent of the choice of bi-submersion in the  path holonomy atlas. Indeed if $U,U'$ are two such bi-submersions and $u\in U, u'\in U'$ map to the same point of $H_L$ under the quotient map, then there exists a (smooth) morphism a bi-submersions $\Phi$ defined near $u'$ such that $\sharp_{U'}=\sharp_U \circ \Phi$, showing that $\sharp_{U'}$ is a smooth map for the differentiable structure defined by $U$. 
\end{proof}

\section{\texorpdfstring{Smoothness of the holonomy groupoid $H$ restricted to a leaf}{Smoothness of the holonomy groupoid $H$ restricted to a leaf}}\label{section:isotropy}

 In this section we discuss conditions for the smoothness of the isotropy groups of the holonomy groupoid, of the fibres of its source map, {and of its restriction    to a leaf}. 
Namely, after some preparation in  \S \ref{subsection:locallgp}-\ref{subsec:intoHxx}, we construct a  morphism   $\varepsilon \colon G_x \to H_x^x$ from the simply connected Lie group integrating the isotropy Lie algebra $\g_x$ into the isotropy group at $x$  of the holonomy groupoid. 
We use this morphism in \S \ref{subsection:isotrsmooth} and \S \ref{subsection:longitsmooth} to show that when $ker(\varepsilon)$ is closed then the isotropy group $H_x^x$ and the restriction  $H_L$ of the holonomy groupoid to a leaf are both smooth. 

We are going to consider the \textit{path holonomy atlas}  

\subsection{A local group structure attached to bi-submersions}\label{subsection:locallgp}

Let $(M,\cF)$ be a singular foliation. This section proves the following result: If $x \in M$ and $(U,\bt,\bs)$ is a path holonomy bi-submersion, minimal at $(x,0)$, then $U_x^x = \bs^{-1}(x) \cap \bt^{-1}(x)$ is a local Lie group.

\begin{lemma}\label{unique}
Let $(U,\bt,\bs)$ be a path holonomy bi-submersion, minimal at $(x,0)$. 
Let $X \in \cF$ and $Y,Y' \in C^{\infty}(U;\ker d\bs)$ such that $\bt_*(Y) = \bt_*(Y') = X$. Then the restrictions of $Y, Y'$ to $U_x^x$ are equal.
\end{lemma}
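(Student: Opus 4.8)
\emph{Set-up.} Put $Z := Y - Y'$. Since $\bt_*(Y) = \bt_*(Y') = X$, the field $Z$ is a section of $\ker d\bs$ with $d\bt(Z) = 0$ everywhere, and the assertion is exactly that $Z$ vanishes at each point of $U_x^x$. Writing $U \subseteq M \times \R^n$ with $\bs(y,\xi) = y$ and $\bt(y,\xi) = exp_y(\sum_i \xi_i X_i)$, I trivialize $\ker d\bs$ by $\partial_{\xi_1},\dots,\partial_{\xi_n}$ and set $Z = \sum_i b_i\,\partial_{\xi_i}$ with $b_i \in C^{\infty}(U)$ and $v_i(y,\xi) := d\bt_{(y,\xi)}(\partial_{\xi_i})$. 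The hypothesis then reads
\begin{equation*}
\sum_{i=1}^{n} b_i(y,\xi)\,v_i(y,\xi) = 0 \quad\text{for all } (y,\xi)\in U ,
\end{equation*}
and I must show $b_i(x,\xi_0) = 0$ whenever $(x,\xi_0)\in U_x^x$. This cannot be extracted pointwise: at such a point the $v_i$ are in general linearly dependent (at $(x,0)$ the kernel of $c\mapsto \sum_i c_i v_i(x,0) = \sum_i c_i X_i(x)$ is exactly $\g_x$), so the proof must use the identity for nearby $y$ together with smoothness.

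\emph{Main step.} I would fix $\xi_0$ with $(x,\xi_0)\in U_x^x$ and restrict the identity to the slice $\xi = \xi_0$. The map $\phi := \bt(\cdot,\xi_0)$ is the time-one flow of $\sum_j (\xi_0)_j X_j \in \cF$, hence a foliation-preserving local diffeomorphism near $x$, and $(x,\xi_0)\in U_x^x$ means precisely $\phi(x) = x$. Transporting each $v_i(\cdot,\xi_0)$ (a vector field along $\phi$) back by $\phi$, I set $\hat{v}_i(y) := (d\phi_y)^{-1}\big(v_i(y,\xi_0)\big)$, so that $\sum_i b_i(y,\xi_0)\,\hat{v}_i(y) = 0$ near $x$. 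The crucial claim is $\hat{v}_i \in \cF$. Indeed, $\partial_{\xi_i}$ is a section of $\ker d\bs$, so by axioms (ii) and (i) of a bi-submersion it lies in $\bs^{-1}(\cF) = \bt^{-1}(\cF)$; by the description of $\bt^{-1}(\cF)$ in item \ref{fol:pullback}) this gives $v_i \in \bt^{*}\cF$, i.e. $v_i(\cdot,\xi_0) = \sum_\alpha h_\alpha\,(X_\alpha\circ\phi)$ for some $X_\alpha\in\cF$ and $h_\alpha\in C^{\infty}$. Applying $(d\phi)^{-1}$ turns $X_\alpha\circ\phi$ into $(\phi^{-1})_*X_\alpha$, which lies in $\cF$ because $\phi$ preserves $\cF$; hence $\hat{v}_i = \sum_\alpha h_\alpha\,(\phi^{-1})_*X_\alpha \in \cF$.

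\emph{Conclusion.} Granting $\hat{v}_i\in\cF$, I reduce $\sum_i b_i(\cdot,\xi_0)\,\hat{v}_i = 0$ modulo $I_x\cF$: since $\big(b_i(\cdot,\xi_0) - b_i(x,\xi_0)\big)\hat{v}_i \in I_x\cF$, passing to $\cF_x = \cF/I_x\cF$ yields $\sum_i b_i(x,\xi_0)\,[\hat{v}_i] = 0$. It remains to check that $\{[\hat{v}_i]\}_{i=1}^n$ is a basis of $\cF_x$, which then forces $b_i(x,\xi_0) = 0$ for all $i$ and finishes the proof. At $\xi_0 = 0$ one has $\phi = \id$ and $\hat{v}_i = v_i(\cdot,0) = X_i$, so $\{[\hat{v}_i]\} = \{[X_i]\}$ is a basis by the defining property of a path holonomy bi-submersion. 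Since $U$ is minimal at every point of $\bs^{-1}(x)$ (there $\dim\cF$ is constantly $n$), the same basis property is expected at every $(x,\xi_0)\in U_x^x$; alternatively, as $[\hat{v}_i]$ depends continuously on $\xi_0$ and being a basis is an open condition, it persists for $\xi_0$ near $0$, after possibly shrinking $U$ (harmless, since $U_x^x$ is meant as a germ near the identity $(x,0)$).

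\emph{Main obstacle.} The heart of the argument is the claim $\hat{v}_i\in\cF$: one must upgrade the pointwise membership $v_i\in\bt^{*}\cF$ coming from the bi-submersion axioms to an honest element of $\cF$ after pulling back along the flow $\phi$, and it is exactly the foliation-invariance of $\phi$ that makes this work. The basis property of $\{[\hat{v}_i]\}$ at points $\xi_0\neq 0$ is the secondary point, settled either by minimality of $U$ along $\bs^{-1}(x)$ or by the continuity argument above.
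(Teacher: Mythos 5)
Your overall strategy---write $Z=Y-Y'$ in a frame of $\ker d\bs$, push forward by $\bt$, reduce modulo $I_x\cF$, and invoke a basis of $\cF_x$---is the same skeleton as the paper's proof, but your choice of frame creates an obligation you do not discharge, and this is a genuine gap. Because you use the coordinate frame $\partial_{\xi_1},\dots,\partial_{\xi_n}$, the classes you end up with are $[\hat v_i^{\xi_0}]\in\cF_x$, which depend on the point $(x,\xi_0)\in U_x^x$, and you must prove they form a basis at \emph{every} such point. Neither of your two justifications does this. Minimality of $U$ along $\bs^{-1}(x)$ is a pure dimension count ($\dim U=\dim M+\dim \cF_x$) and by itself says nothing about linear independence of the $[\hat v_i^{\xi_0}]$; you only assert the basis property ``is expected''. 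The continuity/openness argument covers only $\xi_0$ in some neighbourhood of $0$, whereas $U_x^x$ can contain points far from $(x,0)$ (for the rotation foliation of Example \ref{eps:1.2}(i), $U_0^0$ is a whole interval in $\{0\}\times\R$), and the lemma---used in Lemma \ref{lifti} to define $\delta$ on all of $U_x^x$---is not a statement about germs at $(x,0)$. Even the local claim needs more: to see that $\xi_0\mapsto[\hat v_i^{\xi_0}]$ is continuous you must first exhibit $\hat v_i^{\xi_0}=\sum_j c_{ij}(\cdot,\xi_0)X_j$ with $c_{ij}$ smooth in all variables, which you do not do.

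The paper sidesteps all of this by choosing the frame differently: it takes lifts $Y_1,\dots,Y_n\in C^{\infty}(U;\ker d\bs)$ with $\bt_*Y_i=X_i$ (pointwise linearly independent, by the argument in the proof of Lemma \ref{lemma:unique2} or by Debord), writes $Z=\sum_i g_iY_i$, and pushes forward along a transversal to the $\bt$-fibres through the given $u\in U_x^x$ to obtain $0=\sum_i\tilde g_iX_i$ with $\tilde g_i\circ\bt=g_i$ there. The basis of $\cF_x$ that appears is then always $\{[X_i]\}$, independently of $u$, so no further basis claim arises. If you want to salvage your route, observe that $\partial_{\xi_i}=\sum_j c_{ij}Y_j$ and that $(\phi^{-1})_*$ descends to an automorphism of $\cF_x$ (it preserves $\cF$ and $I_x\cF$ since $\phi$ fixes $x$); hence $\{[\hat v_i^{\xi_0}]\}$ is a basis exactly when the $Y_j$ are linearly independent at $(x,\xi_0)$---i.e.\ your missing step is precisely the pointwise linear independence of the lifts on which the paper's proof rests. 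The remaining ingredients of your argument ($v_i\in\bt^*\cF$ from the bi-submersion axioms, invariance of $\cF$ under $(\phi^{-1})_*$, and the reduction mod $I_x\cF$) are correct.
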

\begin{proof}
We have to show that if $Z\in  C^{\infty}(U;\ker d\bs) \cap C^{\infty}(U;\ker d\bt)$
then $Z|_{U_x^x}=0$. Choose $X_1,\dots,X_n \in \cF$ whose images in $\cF_x$ form a basis, and choose lifts $Y_i \in C^{\infty}(U;\ker d\bs)$, that is, $\bt_*Y_i=X_i$. Since the $Y_i$ are linearly independent (\cf {proof of Lemma \ref{lemma:unique2} or} \cite[Prop. 1]{Debord1})   and $\dim(\cF_x)=dim(\ker d\bs)$ there exist (unique) $g_i\in C^{\infty}(U)$ for which $\sum_{i=1}^n g_i Y_i=Z$. Let $u\in U_x^x$, let $D$ be a small submanifold of $U$ through $u$ transverse to the $\bt$-fibers, and define $\tilde{g}_i\in C^{\infty}(M)$ by $(g_i)|_D=\tilde{g}_i \circ \bt|_D$. We have $$0=\bt_*(Z)=\sum_i \tilde{g}_i X_i,$$  
so $$[0]=[\sum_i \tilde{g}_i X_i]=\sum_i \tilde{g}_i(x) [X_i]\in \cF_x.$$
As the $[X_i]$ form a basis of $\cF_x$, we obtain $\tilde{g}_i(x)=0$ for all $i$, so $g_i(u)=0$ for all $i$, showing that $Z_u=0$.   As $u\in U_x^x$ was arbitrary, we conclude that $Z|_{U_x^x}=0$.
\end{proof}
\begin{lemma}\label{lifti}
Let $(U,\bt,\bs)$ be a path holonomy bi-submersion, minimal at $(x,0)$. There exists a well-defined Lie 
algebra isomorphism  
$$
\delta \colon \g_x \to \{Y\in \vX(U_x^x): Y \text{ admits a $\bt$-projectable extension }\hat{Y}\in C^{\infty}(U,\ker d\bs)\},
\quad [X] \mapsto \hat{Y}|_{U_x^x}  
$$
where $X\in \cF(x)$ and $\hat{Y}$ is any lift of $X$ by $\bt$ lying in $C^{\infty}(U;ker d\bs)$.
\end{lemma}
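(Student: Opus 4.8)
The plan is to treat the map itself as forced by its formula and put all the work into three verifications: that $\delta$ is well defined, that it intertwines the brackets, and that it is bijective. For well-definedness I would first check that $\hat Y|_{U_x^x}$ really is tangent to $U_x^x$. Tangency to $\bs^{-1}(x)$ is automatic from $\hat Y\in\ker d\bs$, while at $u\in U_x^x$ one has $d\bt(\hat Y_u)=X_{\bt(u)}=X_x=0$ because $X\in\cF(x)$, giving tangency to $\bt^{-1}(x)$ along $U_x^x$. Independence of the chosen lift $\hat Y$ is precisely Lemma \ref{unique}: two elements of $C^{\infty}(U;\ker d\bs)$ with the same $\bt$-projection agree on $U_x^x$. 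Finally, to see that $\delta$ depends only on the class $[X]\in\g_x$ it suffices to show that $X\in I_x\cF$ forces $\hat Y|_{U_x^x}=0$; writing $X=\sum_j f_jZ_j$ with $f_j\in I_x$, $Z_j\in\cF$ and choosing lifts $\hat W_j\in C^{\infty}(U;\ker d\bs)$ of the $Z_j$, the field $\sum_j\bt^*(f_j)\hat W_j$ is a lift of $X$ lying in $\ker d\bs$ that vanishes on $U_x^x$ (since $\bt\equiv x$ there and $f_j(x)=0$), and then Lemma \ref{unique} propagates this to every lift.

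For the homomorphism property I would take representatives $X,X'\in\cF(x)$ with lifts $\hat Y,\hat Y'$ and use that $\ker d\bs$ is involutive (it is the tangent distribution to the fibres of the submersion $\bs$), so $[\hat Y,\hat Y']\in C^{\infty}(U;\ker d\bs)$, together with the fact that $\bt$-projectability is preserved under brackets, giving $\bt_*[\hat Y,\hat Y']=[X,X']$. Since $\cF(x)$ is a Lie subalgebra, $[X,X']\in\cF(x)$, so $[\hat Y,\hat Y']$ is a legitimate lift and $\delta$ of the class of $[X,X']$ equals $[\hat Y,\hat Y']|_{U_x^x}$. As $\hat Y,\hat Y'$ are tangent to $U_x^x$, the bracket of their restrictions is the restriction of their bracket, so $\delta([X,X'])=[\delta[X],\delta[X']]$. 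The same computation shows the target space is closed under the vector-field bracket, so it genuinely is a Lie algebra.

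Surjectivity is essentially built into the definition of the target: any $Y$ there is by hypothesis $\hat Y|_{U_x^x}$ for some $\bt$-projectable $\hat Y\in C^{\infty}(U;\ker d\bs)$ tangent to $U_x^x$, and tangency forces $X:=\bt_*\hat Y\in\cF(x)$, where $\bt_*\hat Y\in\cF$ because $\hat Y\in\ker d\bs\subseteq\bs^{-1}(\cF)=\bt^{-1}(\cF)$; thus $Y=\delta[X]$. The crux is injectivity. Assume $\delta[X]=0$, fix generators $X_1,\dots,X_n$ whose classes form a basis of $\cF_x$ with $\bt$-lifts $Y_i\in C^{\infty}(U;\ker d\bs)$ forming a local frame of $\ker d\bs$ near $(x,0)$ (their linear independence at $(x,0)$ is exactly the point established in the proof of Lemma \ref{unique}), and write $\hat Y=\sum_i g_iY_i$. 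Mimicking Lemma \ref{unique}, I would pick a submanifold $D$ through $(x,0)$ transverse to the $\bt$-fibres, define $\tilde g_i\in C^{\infty}(M)$ by $g_i|_D=\tilde g_i\circ\bt|_D$, and push $\hat Y|_D$ forward by the local diffeomorphism $\bt|_D$ to obtain $X=\sum_i\tilde g_iX_i$ near $x$. Then $\delta[X]=0$ gives $\hat Y_{(x,0)}=0$, and independence of the $Y_i|_{(x,0)}$ yields $g_i(x,0)=0$, hence $\tilde g_i(x)=0$, so that $X=\sum_i\tilde g_iX_i\in I_x\cF$ (globalising by a bump function equal to $1$ near $x$, the complementary term vanishing near $x$ and so also lying in $I_x\cF$); thus $[X]=0$.

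I expect this last step to be the main obstacle, since it is the only place where the explicit minimal path-holonomy model and the transversal-pushforward argument of Lemma \ref{unique} are genuinely required; once well-definedness and the frame properties are in hand the remaining assertions are formal.
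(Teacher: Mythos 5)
Your proof is correct and follows essentially the same route as the paper: Lemma \ref{unique} for independence of the lift, the lift $\bt^*(f)\widehat{Z}$ of $fZ$ for independence of the representative mod $I_x\cF$, the fact that $[\hat Y,\hat Y']$ lifts $[X,X']$ for the bracket, and the linear independence of the $Y_i$ along $U_x^x$ for injectivity. The only cosmetic difference is that the paper deduces injectivity in one line from the basis description of Remark \ref{expl}, whereas you re-run the transversal-pushforward argument of Lemma \ref{unique} with variable coefficients; both are valid, and your extra checks (tangency of $\hat Y|_{U_x^x}$ to $U_x^x$, closure of the target under brackets) are points the paper leaves implicit.
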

\begin{remark}\label{expl} Let $\{X_i\}_{i\le n} \in \cF$  be vector fields whose images in $\cF_x$ form a basis of $\cF_x$  and such that the images of $\{X_i\}_{i\le \ell}$ for a basis of $\g_x$. Let $U$ be the corresponding path holonomy bi-submersion.
   Choose lifts $Y_i \in C^{\infty}(U;\ker d\bs)$ of the $X_i$. Then the map $\delta$ is given by $[X_i]\mapsto Y_i|_{U_x^x}$ for all $i\le \ell$.
\end{remark}

\begin{proof} We show that $\delta$ is well-defined.
Fix an element of $\g_x$ and choose a representative $X$; then  $\hat{Y}|_{U_x^x}$
is uniquely defined by lemma \ref{unique}. The independence from the choice of representative $X$ is as follows: a lift of $fZ$, where $f\in \cI_x$ and $Z\in \cF$, is given by $\bt^*(f)\cdot\widehat{Z}$, where $\widehat{Z}$ is a lift of $Z$, hence on $U_x^x$ it vanishes.  

The injectivity of the map is clear from Rem. \ref{expl} together with the fact that the $Y_i$ are linearly independent at every point of $U_x^x$.
The surjectivity holds because $Y=\delta[
\bt_* \hat{Y}]$ for any $\bt$-projectable extension $\hat{Y}$ of $Y$. Hence $\delta$ is a linear isomorphism.

To see that $\delta$ is a Lie algebra morphism, take $X,X'\in \cF$ and lifts $\hat{Y}, \hat{Y}'\in C^{\infty}(U;ker(d\bs))$ and use that $[\hat{Y},\hat{Y}']$ is a lift of $[X,X']$, so that $\delta[[X,X']]=[\hat{Y},\hat{Y}']|_{U_x^x}=[Y,Y']=[\delta X, \delta X']$.
\end{proof}

\begin{prop}\label{localgp} Let $(U,\bt,\bs)$ be a path holonomy bi-submersion, minimal at $x$. Let $G_x$ be the simply connected Lie group  integrating $\g_x$ and denote by $\tilde{G_x}$ a neighborhood of the unit.
Shrinking $\tilde{G}_x$ and $U$ if necessary, there is a canonical diffeomorphism
$ \Delta \colon \tilde{G}_x \to U_x^x$ making
$U_x^x$ into a local Lie group with identity element $(x,0)$.
\end{prop}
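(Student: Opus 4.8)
The plan is to manufacture the diffeomorphism $\Delta$ out of the Lie algebra of vector fields on $U_x^x$ furnished by the isomorphism $\delta$ of Lemma~\ref{lifti}, and then integrate this infinitesimal data by Lie's fundamental theorem. First I would record the geometry of $U_x^x$. Writing $U\subseteq M\times\R^n$ with $\bs(y,\xi)=y$ and $\bt(y,\xi)=\exp_y(\sum_i\xi_iX_i)$, the restriction $\bt|_{\bs^{-1}(x)}$ is a submersion onto a neighbourhood of $x$ in the leaf $L_x$, so that $U_x^x=(\bt|_{\bs^{-1}(x)})^{-1}(x)$ is an embedded submanifold (this smoothness is already implicit in the statements of Lemmas~\ref{unique} and~\ref{lifti}) with $T_{(x,0)}U_x^x=\ker(ev_x)=\g_x$ and, by the exact sequence \eqref{extn:fiber}, of dimension $\dim\cF_x-\dim F_x=\dim\g_x=:\ell$. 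Choosing $X_1,\dots,X_n$ as in Remark~\ref{expl} so that $[X_1],\dots,[X_\ell]$ is a basis of $\g_x$, with lifts $Y_i\in C^\infty(U;\ker d\bs)$, the $Y_i$ are pointwise linearly independent on $U$; for $i\le\ell$ one has $\bt_*Y_i=X_i$ with $X_i(x)=0$, so $Y_i|_{U_x^x}=\delta[X_i]$ is tangent to $U_x^x$, and being $\ell$ pointwise-independent fields on an $\ell$-manifold they form a global frame of $TU_x^x$.

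By Lemma~\ref{lifti}, $\delta\colon\g_x\to\vX(U_x^x)$ is a homomorphism of Lie algebras whose image is exactly this frame. Dualising, I obtain a $\g_x$-valued coframe $\theta\in\Omega^1(U_x^x)\otimes\g_x$ determined by $\theta(\delta(\xi))=\xi$; the homomorphism property of $\delta$ together with pointwise-independence translates into the Maurer--Cartan equation $d\theta+\tfrac12[\theta,\theta]=0$. Now I would apply Lie's second/third theorem in its parallelism (Darboux--Cartan) formulation for Lie-algebra-valued $1$-forms: there is a local diffeomorphism $\Delta\colon(\tilde G_x,e)\to(U_x^x,(x,0))$, defined on a neighbourhood $\tilde G_x$ of the identity of the \emph{simply connected} group $G_x$, with $\Delta^*\theta=\omega$, where $\omega$ is the left Maurer--Cartan form of $G_x$; equivalently $\Delta_*\xi^L=\delta(\xi)$ for every $\xi\in\g_x$, with $\xi^L$ the left-invariant field. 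Simple connectivity of $G_x$ is what promotes the integration from a germ to an honest map on $\tilde G_x$. Shrinking $\tilde G_x$ and $U$ so that $\Delta$ is a diffeomorphism onto its image, I transport the local group structure of $\tilde G_x\subseteq G_x$ across $\Delta$; since $\Delta(e)=(x,0)$, this exhibits $U_x^x$ as a local Lie group with unit $(x,0)$.

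It remains to check canonicity. The map $\delta$ is intrinsic to $\g_x$ and the bi-submersion (Lemma~\ref{lifti} shows it is independent of the chosen representatives and lifts), hence so are $\theta$ and, by the uniqueness in the integration theorem together with the normalisation $\Delta(e)=(x,0)$, the map $\Delta$; thus the induced local group structure does not depend on the auxiliary choices of $X_i$ and $Y_i$. Independence of the particular path-holonomy bi-submersion minimal at $x$ follows from Lemma~\ref{lemma:unique2}: any two such bi-submersions are related near $(x,0)$ by a morphism $\Phi$ fixing $(x,0)$ and a diffeomorphism onto its image, and since $\Phi$ intertwines $\bs$ and $\bt$ it carries $U_x^x$ diffeomorphically to $(U')_x^x$ and intertwines the two copies of $\delta$, so the two local group structures correspond under $\Phi$.

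The hard part will be the second step: passing from the infinitesimal data --- the $\g_x$-frame, equivalently the Maurer--Cartan coframe $\theta$ --- to an actual diffeomorphism with a neighbourhood of $e\in G_x$. This is precisely where one must invoke Lie's theorem in parallelism form and use simple connectivity of $G_x$; verifying its hypotheses reduces to the two facts already in hand, namely that $\delta$ is a Lie algebra homomorphism and that its image is a pointwise frame, but assembling them into a single globally (on $\tilde G_x$) defined $\Delta$ is the crux. A secondary subtlety, needed before any of this, is the submersivity of $\bt|_{\bs^{-1}(x)}$ onto $L_x$, which underlies the smoothness of $U_x^x$ and the identification $T_{(x,0)}U_x^x\cong\g_x$.
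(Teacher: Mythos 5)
Your argument is correct and follows the paper's own proof in all essentials: both establish that $U_x^x$ is a submanifold of dimension $\dim\g_x$ and then integrate the Lie algebra homomorphism $\delta$ of Lemma~\ref{lifti}, whose image is a pointwise frame, to produce $\Delta$. The only differences are in packaging: the paper phrases the integration step as a locally free infinitesimal action integrating to a local right $\tilde{G}_x$-action, with $\Delta$ the orbit map through $(x,0)$, whereas you dualise to a Maurer--Cartan coframe and invoke the Darboux--Cartan theorem (these are the same ``standard arguments''), and your smoothness argument for $U_x^x$ via submersivity of $\bt|_{\bs^{-1}(x)}$ is precisely the one the paper uses in Prop.~\ref{source}, while the proof of Prop.~\ref{localgp} itself instead appeals to the splitting theorem together with Lemma~\ref{lemma:unique2}.
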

\begin{proof}
First notice that $U_x^x$ is a submanifold of $U^x_x \subset \{x\}\times \R^n$ of dimension $dim(\g_x)$ containing $(x,0)$.
This is immediate if the vector fields $\{X_i\}\subset \cF$ used to define the bi-submersion  arise from a local splitting of $(M,\cF)$ as $(I^k,TI^k)\times (S,\cF_S)$ as in Proposition \ref{transversal}. The general case holds because any path holonomy bi-submersion minimal at $x$ is isomorphic to one arising from a local splitting of  $(M,\cF)$, by Lemma \ref{lemma:unique2}.

By Lemma \ref{lifti}, $\delta$   is an infinitesimal action of the Lie algebra $\g_x$ 
on the manifold $U_x^x$.  Evaluating every $\delta([X])$ at $x$ we obtain a map $\g_x \to T_x(U_x^x)$ which is injective, as a consequence of the fact that 
 the ${Y_i}|_x$ are linearly independent (by the proof of Lemma \ref{lemma:unique2}). By dimension count, it follows that the previous map is an isomorphism, whence the infinitesimal action $\delta$ is locally free at $x$. 
By standard arguments, $\delta$ can be integrated to a local right group action of   $\tilde{G_x}$ on $U_x^x$.   Evaluation at $x$ gives a map
\begin{equation}\label{GxU}
\Delta \colon \tilde{G}_x \to U_x^x, \quad g \mapsto x\cdot g,
\end{equation}
which (shrinking $\tilde{G}_x$ and $U$ if necessary) is a diffeomorphism. 
\end{proof}

It's easy to see that for any $v\in \g_x$,   under the identification $\Delta$, the infinitesimal generator  $\delta(v)\in \vX(U_x^x)$ corresponds to $\overleftarrow{v}\in \vX(\tilde{G}_x)$, the left-invariant vector field  whose value at the unit is $v$.  Since $exp_{\g}(tv)\subset G$ is the integral curve of  $\overleftarrow{v}$ starting at the identity element, we obtain an alternative formula for $\Delta$ (here $X_i$ and $Y_i$ are as in Remark \ref{expl}):
\begin{eqnarray}\label{ElocformU}
\Delta \colon \tilde{G}_x \to U_x^x\;,\;
exp_{\g}(\sum_{i=1}^l k_i[X_i]) \mapsto   exp_{(x,0)}(\sum_{i=1}^l k_i Y_i). 
\end{eqnarray}
 
\subsection{\texorpdfstring{The canonical morphism into the isotropy groups of $H$}{The canonical morphism Gx to Hxx}}\label{subsec:intoHxx}

Here we introduce a canonical morphism of local topological groups from $U_x^x$ to the isotropy group $H_x^x$ of the holonomy groupoid.

\begin{lemma}\label{lem:inde} Denote by $\sharp$
 the quotient map onto the holonomy groupoid as in \S \ref{sec:groatlas}.
The composition
\begin{equation}\label{tvare}
\tilde{\varepsilon} \colon \tilde{G}_x \stackrel{\Delta}{\cong} U_x^x \stackrel{\sharp}{\to} H_x^x
\end{equation}
 is independent of the choice of path holonomy bi-submersion $U$.
\end{lemma}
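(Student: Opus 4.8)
The plan is to show that the composition $\tilde{\varepsilon}=\sharp\circ\Delta$ does not depend on the choice of path holonomy bi-submersion minimal at $x$. So let $(U,\bt,\bs)$ and $(U',\bt',\bs')$ be two such bi-submersions, arising respectively from vector fields $X_1,\dots,X_n$ and $X'_1,\dots,X'_n$ whose images form bases of $\cF_x$. I denote by $\Delta\colon\tilde{G}_x\to U_x^x$ and $\Delta'\colon\tilde{G}_x\to (U')_x^x$ the two diffeomorphisms from Proposition \ref{localgp}, and by $\sharp,\sharp'$ the two quotient maps. What must be checked is that $\sharp\circ\Delta=\sharp'\circ\Delta'$ as maps $\tilde{G}_x\to H_x^x$.

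First I would invoke Lemma \ref{lemma:unique2}, which provides a morphism of bi-submersions $\Phi\colon U\to U'$ defined near $(x,0)$, fixing $(x,0)$, and being a diffeomorphism onto its image. By the very definition of the equivalence relation defining $G(\cU)$ (and recalled in \S\ref{sec:groatlas}), a morphism of bi-submersions intertwines the quotient maps: $\sharp'\circ\Phi=\sharp$. Moreover $\Phi$ restricts to a map $U_x^x\to (U')_x^x$, since $\Phi$ preserves both source and target fibers and fixes $x$. Therefore the equality $\sharp\circ\Delta=\sharp'\circ\Delta'$ would follow immediately once I establish the key compatibility
\begin{equation}\label{eq:PhiDelta}
\Phi\circ\Delta=\Delta'\quad\text{on }\tilde{G}_x.
\end{equation}

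To prove \eqref{eq:PhiDelta} I would use the explicit description of $\Delta$ given in the proof of Proposition \ref{localgp}: $\Delta$ integrates the infinitesimal action $\delta$ of $\g_x$ on $U_x^x$ from Lemma \ref{lifti}, realized by the lifted vector fields $Y_i$, and similarly $\Delta'$ integrates the action by the $Y'_i$. Thus it suffices to check that $\Phi$ intertwines these infinitesimal actions, i.e. that $\Phi_*\,\delta(v)=\delta'(v)$ along the image of $\Phi$ for every $v\in\g_x$. This in turn reduces to comparing the lifted generators: writing $X_i=\sum_l c_{il}X'_l$ as in Lemma \ref{lemma:unique2}, the morphism $\Phi$ was \emph{defined} precisely so that it pushes $Y_i$ forward to $\sum_l \bt'^*(c_{il})Y'_l$. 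Restricting to the unit fiber $U_x^x$ and using that $\bt'\circ\Phi=\bt$ sends $U_x^x$ to $x$, the functions $\bt'^*(c_{il})$ evaluate to the constants $c_{il}(x)$, which is exactly the change-of-basis matrix expressing $[X_i]$ in terms of the $[X'_l]$ in $\g_x=\cF_x$ restricted to the isotropy directions. Hence $\Phi$ carries the action $\delta$ to $\delta'$ under the identity on $\g_x$, and integrating gives \eqref{eq:PhiDelta}.

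The main obstacle, I expect, is this last bookkeeping step: one must verify carefully that the change-of-basis matrix $[c_{il}(x)]$ is consistent with the two identifications $\g_x\cong\g_x$ implicitly used in defining $\Delta$ and $\Delta'$, so that the intertwining holds for the \emph{identity} on $G_x$ rather than up to an automorphism. Concretely, both $\delta$ and $\delta'$ are canonically indexed by the \emph{same} abstract Lie algebra $\g_x$ (via Lemma \ref{lifti}, which does not depend on the chosen basis), so the formula $\Phi_*Y_i=\sum_l c_{il}(x)Y'_l$ on $U_x^x$ is simply the statement $\delta'([X_i])=\delta'(\sum_l c_{il}(x)[X'_l])=\sum_l c_{il}(x)\,\delta'([X'_l])$, using linearity of $\delta'$ and $[X_i]=\sum_l c_{il}(x)[X'_l]$ in $\g_x$. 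Once this identity of infinitesimal generators on $U_x^x$ is in hand, uniqueness of integral flows yields \eqref{eq:PhiDelta}, and composing with $\sharp'\circ\Phi=\sharp$ completes the proof. I would close by remarking that the argument is symmetric in $U$ and $U'$ and local near the unit, which is all that is needed since $\tilde{\varepsilon}$ is a germ of a map of local groups.
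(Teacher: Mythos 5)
Your proposal is correct and follows essentially the same route as the paper: both invoke the isomorphism of bi-submersions from Lemma \ref{lemma:unique2}, argue that it intertwines the infinitesimal actions $\delta$ and $\delta'$ of Lemma \ref{lifti} (and hence, after integrating, the diffeomorphisms $\Delta$ and $\Delta'$), and conclude because every morphism of bi-submersions intertwines the quotient maps to $H$. Your version merely spells out the change-of-basis bookkeeping that the paper compresses into the remark that $\delta$ is defined purely in terms of the source and target maps.
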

\begin{proof}
Let $U$ and $U'$ be bi-submersions, minimal at $x$. Then, from Lemma \ref{lemma:unique2} there exists an \emph{isomorphism} of bisubmersions $\Upsilon \colon U \to U'$ defined near $x$ and fixing the point $x$. Since the maps $\delta$ of Lemma \ref{lifti} are given in terms of the source and target maps, $\Upsilon$ intertwines the maps $\delta$ and $\delta'$ of Lemma \ref{lifti}, and therefore intertwines also the diffeomorphisms $\Phi \colon \tilde{G_x} \to U_x^x$ and $\Phi' \colon \tilde{G_x} \to {U'}_x^x$. We conclude by noticing that $\Upsilon$, as every morphism of bi-submersions, intertwines the quotient maps to $H$.
\end{proof} 
 
We prove two properties of the map  $\tilde{\varepsilon}$ of eq. \eqref{tvare}. 
\begin{lemma}\label{openim}
The image of the map $\tilde{\varepsilon}$ is open in $H_x^x$. \end{lemma}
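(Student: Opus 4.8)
The plan is to show that $\tilde{\varepsilon}(\tilde{G}_x)$ contains a neighborhood of the identity in $H_x^x$ by exhibiting an explicit submanifold of the bi-submersion whose image under $\sharp$ sweeps out such a neighborhood. The key structural fact I would exploit is that $H_x^x = \bs^{-1}(x) \cap \bt^{-1}(x)$, so a point of $H_x^x$ near the identity is, by the definition of the quotient map $\sharp$ and the openness of $\sharp U$ in $H$ (recalled in \S\ref{subsubsection:Htop}), the image of some point $u \in U_x^x$ for a path holonomy bi-submersion $U$ minimal at $x$. Thus the composition $\sharp \circ \Delta = \tilde{\varepsilon}$ maps $\tilde{G}_x$ \emph{onto} $\sharp(U_x^x)$ by construction, since $\Delta \colon \tilde{G}_x \to U_x^x$ is a diffeomorphism by Proposition \ref{localgp}. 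So the content of the lemma reduces to showing that $\sharp(U_x^x)$ is open in $H_x^x$.

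First I would make precise that $U_x^x = \bs^{-1}(x) \cap \bt^{-1}(x)$ inside $U$, and that $H_x^x = \bs^{-1}(x)\cap \bt^{-1}(x)$ inside $H$. The natural approach is to use that $\sharp U$ is open in $H$ (established in \S\ref{subsubsection:Htop}) together with compatibility of $\sharp$ with the source and target maps: $\bs_H \circ \sharp = \bs_U$ and $\bt_H \circ \sharp = \bt_U$. Consequently the preimages match up: $\sharp^{-1}(H_x^x) \cap U = U_x^x$ and more importantly $\sharp(U_x^x) = \sharp(U) \cap H_x^x$, because a point of $\sharp(U)$ lies in $H_x^x$ precisely when it has source and target $x$, which by the intertwining relations means any of its representatives in $U$ lies in $U_x^x$. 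Since $\sharp(U)$ is open in $H$, the intersection $\sharp(U)\cap H_x^x$ is open in the subspace $H_x^x$, and this intersection equals $\sharp(U_x^x) = \tilde{\varepsilon}(\tilde{G}_x)$.

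The main step to verify carefully is the set equality $\sharp(U_x^x) = \sharp(U)\cap H_x^x$. The inclusion $\subseteq$ is immediate from the intertwining of source and target. For $\supseteq$, take a point $h \in \sharp(U) \cap H_x^x$; pick $u \in U$ with $\sharp(u) = h$. Then $\bs_U(u) = \bs_H(h) = x$ and $\bt_U(u) = \bt_H(h) = x$, so $u \in U_x^x$ and $h \in \sharp(U_x^x)$. This is the crux and it is genuinely routine once the intertwining identities are in place; the only subtlety is confirming that every point of $H_x^x$ lying in the open set $\sharp(U)$ genuinely admits a representative \emph{in $U$ itself} rather than merely in some other bi-submersion equivalent to it, which is exactly what $h \in \sharp(U)$ provides by definition of the image.

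I expect the part requiring the most care is being precise about which topology and which preimages are being used: $H_x^x$ carries the subspace topology from $H$, and one must not conflate "open in $H$" with "open in $H_x^x$." The argument is clean precisely because openness of $\sharp(U)$ in $H$ survives intersection with the subspace $H_x^x$. I would therefore state the intertwining relations explicitly, invoke the openness of $\sharp U$ from \S\ref{subsubsection:Htop}, and then conclude via the subspace-topology observation, so that $\tilde{\varepsilon}(\tilde{G}_x) = \sharp(U_x^x) = \sharp(U) \cap H_x^x$ is open in $H_x^x$.
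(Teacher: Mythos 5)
Your proposal is correct and follows essentially the same route as the paper: the paper's proof also reduces the statement to the set identity $\sharp U_x^x = \sharp U \cap H_x^x$ together with the openness of $\sharp U$ in $H$ established at the beginning of \S\ref{subsubsection:Htop}. You merely spell out the verification of that identity via the intertwining of $\sharp$ with the source and target maps, which the paper leaves implicit.
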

 
 \begin{proof} Recall that $H$ is endowed with the quotient topology given by the map 
$\sharp : \cup_{\alpha} U_{\alpha} \to H$.
One has $\sharp U_x^x=\sharp U\cap H_x^x$, so it suffices to show that $\sharp U$ is open in $H$, which is true by the beginning of  \S \ref{subsubsection:Htop}. 
\end{proof}
 
 \begin{prop}\label{expismorph} 
 The map $\tilde{\varepsilon}$ in eq. \eqref{tvare}  is a morphism of local topological groups.
\end{prop}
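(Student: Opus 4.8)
The plan is to show that $\tilde{\varepsilon}$ respects the local group structures on both sides. I would first fix a path holonomy bi-submersion $(U,\bt,\bs)$ minimal at $x$ and recall that, by Proposition \ref{localgp}, $U_x^x$ carries a local Lie group structure via the diffeomorphism $\Delta \colon \tilde{G}_x \to U_x^x$. Thus $\tilde{\varepsilon} = \sharp \circ \Delta$ is automatically continuous (as $\sharp$ is continuous for the quotient topology and $\Delta$ is a diffeomorphism), so the only real content is that $\tilde{\varepsilon}$ is multiplicative: it sends products in $\tilde{G}_x$ to products in $H_x^x$, wherever both are defined. Since $\Delta$ is a local group isomorphism onto $U_x^x$ (again by Proposition \ref{localgp}), it suffices to show that $\sharp \colon U_x^x \to H_x^x$ is multiplicative, i.e. that the local group structure on $U_x^x$ is compatible with the groupoid multiplication on $H$ under $\sharp$.

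The key step is therefore to identify the local group multiplication on $U_x^x$ with the groupoid multiplication inherited from $H$. Concretely, given $u, u' \in U_x^x$ representing classes $\sharp u, \sharp u' \in H_x^x$, their product in $H$ is represented by their image in the composite bi-submersion $U \circ U = U \times_{\bs,\bt} U$, namely the class $\sharp(u,u')$. On the other hand, the local group law on $U_x^x$ coming from Proposition \ref{localgp} is the one transported from $\tilde{G}_x$ via $\Delta$; by construction this is precisely the product induced by integrating the infinitesimal action $\delta$, which in turn is defined through the source and target maps of $U$. The heart of the argument is to check that composing in $U \circ U$ and then applying $\sharp$ agrees with multiplying in the local group $U_x^x$ and then applying $\sharp$. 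I would do this by comparing the two constructions on the generators: using the explicit formula \eqref{ElocformU} for $\Delta$, the product $\Delta(g)\cdot\Delta(g')$ in the local group corresponds, via the Baker–Campbell–Hausdorff formula on $\tilde{G}_x$, to a flow of the lifted vector fields $Y_i$, while the groupoid product of $\sharp\Delta(g)$ and $\sharp\Delta(g')$ is computed in $U\circ U$; one checks these are related by a morphism of bi-submersions $U \circ U \to U$, so that they have the same image under $\sharp$.

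The existence of such a morphism of bi-submersions $U \circ U \to U$ near $(u,u')$, carrying the composite of the two carried diffeomorphisms, is guaranteed by the adaptedness built into the path holonomy atlas (Proposition \ref{prop:allsmooth} and the discussion of \S \ref{sec:groatlas}); indeed two points carry the same local diffeomorphism iff they are related by such a morphism, by item \eqref{itd} of \S \ref{subsection:morbisubbis}. Thus the groupoid product $\sharp u \cdot \sharp u'$ is exactly $\sharp$ of the point of $U$ carrying the composite diffeomorphism, and the local group law on $U_x^x$ is designed to carry precisely this composite. I would make this precise by invoking the fact, recorded in \S \ref{subsection:morbisubbis}, that $(x,0)$ carries the local diffeomorphism $y \mapsto exp_y(\sum \phi_i(y) X_i)$, together with the observation that on $U_x^x$ the infinitesimal action $\delta$ integrates to the action of $\tilde{G}_x$ by composition of such flows.

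The main obstacle I anticipate is the careful bookkeeping of \emph{locality}: both $\tilde{G}_x$ and $U_x^x$ are only local groups, so I must restrict to neighborhoods small enough that all three products $g g'$, $\Delta(g)\Delta(g')$, and $\sharp\Delta(g)\cdot\sharp\Delta(g')$ are simultaneously defined and that the comparing morphism $U \circ U \to U$ exists near $(u,u')$. A secondary subtlety is verifying that the local group structure from Proposition \ref{localgp}, which is defined via the integrated infinitesimal action $\delta$, genuinely matches the groupoid multiplication rather than merely its infinitesimal version; this requires passing from the Lie algebra morphism $\delta$ of Lemma \ref{lifti} to the group level, where the compatibility of $\Delta$ with left-invariant vector fields (the remark following Proposition \ref{localgp}) does the essential work. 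Once locality is handled, multiplicativity follows formally, and continuity is immediate, so the statement follows.
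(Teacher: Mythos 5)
Your proposal is correct and follows essentially the same route as the paper: continuity is immediate from the continuity of $\Delta$ and $\sharp$, and multiplicativity is reduced, via the explicit formula \eqref{ElocformU} and the Baker--Campbell--Hausdorff formula, to exhibiting a morphism of bi-submersions $U\circ U \to U$ sending $(u_1,u_2)$ to $u$, which is produced by matching carried local diffeomorphisms as in item \ref{itd}) of \S \ref{subsection:morbisubbis}. The only point you leave implicit, and which the paper spells out, is that $BCH(\varphi v^1,\varphi v^2)$ and $\varphi(BCH(v^1,v^2))$ differ by an element of $I_x\cF$, so the composite flow is $exp(\sum \phi_i X_i)$ with $\phi_i(x)=k_i$, which is exactly what makes the relevant bisection of $U$ pass through $u$.
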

\begin{proof} The map $\tilde{\varepsilon}$ is continuous since $\Delta$ and $\sharp$ are. 
We check explicitly that formula \eqref{tvare} respects products, using eq. \eqref{ElocformU}. Let  $ \tilde{\g}_x$ be a neighborhood of the origin in $\g_x$ so that the exponential  map $\exp_{\g} \colon \tilde{\g}_x \to \tilde{G}_x$ is a diffeomorphism. The (partial) product on $\tilde{G}_x$ corresponds to the Baker-Campbell-Hausdorff formula (BCH) in $\tilde{\g}_x$.

Let $\{X_i\}_{i\le n} \in \cF$  be vector fields whose images in $\cF_x$ form a basis of $\cF_x$  and such that the images of $\{X_i\}_{i\le \ell}$ for a basis of $\g_x$.  Let $v^{\alpha}=\sum_{i\le \ell} k_i^{\alpha} [X_i ]\in \tilde{\g}_x$ (${\alpha}=1,2$)   so that 
$$v:=BCH(v^1,v^2)=v^1+v^2+\frac{1}{2}[v^1,v^2]+\frac{1}{12}[[v^1,[v^1,v^2]]+\dots$$
 also lies in $\tilde{\g}_x$.  Write  $v = \sum_{i\le \ell} k_i [X_i ]$ for some coefficients  $\overrightarrow{k} \in \R^ \ell $.  
 Consider the   bi-submersion $U\subset M\times \R^n$ associated to $X_1,\cdots,X_n$, and let $\{Y_i\}$ be a choice of vertical vector fields near $(x,0)$ in $U\subset M\times \R^n$ which lifts
 $X_i$ w.r.t. the target map.
 Put $u_{\alpha} :=\Delta(exp_{\g} v^{\alpha})= exp_{(x,0)}(\sum_{i\le \ell} k_i^{\alpha} Y_i) $ and $u = exp_{(x,0)}(\sum_{i\le \ell} k_i Y_i) $. 
Notice that we have $(u_1,u_2)\in U\circ U $, that is $\bs(u_1)=x=\bt(u_2)$, since the vector field $\sum_{i\le \ell} k^2_i Y_i$ maps under $\bt_*$ to $\sum_{i\le \ell} k^2_i  X_i$, which vanishes at $x$.
Since \begin{align*} 
\tilde{\varepsilon}(exp_{\g}v_1 \cdot exp_{\g}v_1)=\tilde{\varepsilon}(exp_\g v)&=\sharp u\\
\tilde{\varepsilon}(exp_{\g}v_1 )\cdot \tilde{\varepsilon}(exp_{\g}v_2 )&=\sharp(u_1,u_2),  
\end{align*}
we have to show that there exists a morphism of bi-submersions $U \circ U \to U$ defined in a neighborhood of $(u_1,u_2)$ such that $(u_1,u_2) \mapsto u$.

  Consider the map
$$\varphi \colon \cF_x \rightarrow \cF,\;\;\; [X_i]\mapsto X_i.$$ 
The map $\varphi$ is not a Lie algebra homomorphism; rather, $\varphi[v^1,v^2]$ and $[\varphi v^1, \varphi v^2]$ differ by an element of $I_x\cF$. Using the fact that $\varphi v^1$ and
 $\varphi v^2$  are vector fields vanishing on $x$ one shows that 
 $BCH(\varphi v^1,\varphi v^2)$ and $\varphi(BCH(v^1,v^2))=\sum_{i\le \ell} k_iX_i$ differ by an element $Z\in I_x\cF$,   which hence is of the form $\sum_{i\le n} f_i X_i$ where $f_i\in I_x$.
Therefore  
\begin{equation}\label{phi}
BCH(\varphi v^1,\varphi v^2)=\varphi (BCH(v^1,v^2))+Z=\sum_{i\le n} \phi_iX_i
\end{equation}
where $\phi_i=k_i+f_i\in C^{\infty}(M)$ (setting $k_i=0$ for $\ell <i\le n$). Notice that in particular $\phi_i(x) = k_i$.

Consider the following local diffeomorphism of $M$:
\begin{equation}\label{rl}
exp(\sum_{i\le \ell} k_i^1 X_i) \circ exp(\sum_{i\le \ell} k_i^2 X_i) 
=exp(BCH(\varphi v^1,\varphi v^2))=exp(\sum_{i\le n} \phi_iX_i),
\end{equation}
where we used eq. \eqref{phi} to write down the last term. The local diffeomorphism on the l.h.s. of \eqref{rl} is carried in $U \circ U\subset M\times \R^n \times \R^n$ by the bisection
\begin{equation*}
\{(\bt(y,\overrightarrow{\lambda_y}),exp_{(\bt(y,\overrightarrow{\lambda_y}),0)}(\sum_{i\le \ell} k_i^1 Y_i) )\;\;,\;\;(y,\overrightarrow{\lambda_y}): y\in M\}
\end{equation*}
where we use the short-hand notation $\overrightarrow{\lambda_y}:=exp_{(y,0)}(\sum_{i\le \ell} k_i^2 Y_i) $ for all $y\in M$. This bisection  passes through the point $(u_1,u_2)$ since $\bt(u_2)=x$. The local diffeomorphism on the r.h.s. of \eqref{rl} is carried by the following bisection of $U$:
\begin{equation*}
\{ (y, exp_{(y,0)}(\bt^*\phi_i \cdot Y_i): y\in M\}.
\end{equation*}
This bisection passes through the point $u = exp_{(x,0)}(\sum_{i\le \ell} k_i Y_i)$.
Indeed the curve $$\gamma \colon \epsilon \mapsto exp_{(x,0)}(\epsilon \sum_{i\le \ell}\bt^*\phi_i \cdot  Y_i)$$ maps under $\bt$ to the constant curve at $x$ (since the vector field $\sum_{i\le \ell} \phi_i X_i$ vanishes at $x$), so along $\gamma$ we have $\bt^*\phi_i=\phi_i(x)=k_i$.

In conclusion, we found bisections of $U\circ U$ and $U$, passing respectively through $(u_1,u_2)$ and $u$, which carry the same diffeomorphism.  It follows from item \ref{itd}) in \S \ref{subsection:morbisubbis} 
that there exists a morphism of bi-submersions $U \circ U \to U$ such that $(u_1,u_2) \mapsto u$.  
\end{proof}

Last, let us discuss the kernel of $\tilde{\varepsilon}$.

\begin{lemma}\label{lem:ker}
Let $(U,\bt,\bs)$ a path holonomy bi-submersion minimal at $x\in M$ and consider the map $\tilde{\varepsilon} \colon \tilde{G}_x \to H_x^x$. Then  $g\in ker\tilde{\varepsilon}$ if{f} there exists a bisection of $(U,\bt,\bs)$ through $\Delta(g)$ carrying the identity, where $U$ and $\Delta$ are as in Prop. \ref{localgp}.
\end{lemma}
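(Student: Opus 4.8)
The plan is to reduce the statement to the morphism characterization of item \ref{itd}) in \S\ref{subsection:morbisubbis} and then transport a suitable bisection. First I would identify the unit: by Prop. \ref{localgp} and formula \eqref{ElocformU} the map $\Delta$ sends the unit of $\tilde G_x$ to $(x,0)$, the identity element of the local group $U_x^x$, and since $\tilde\varepsilon$ is a morphism of local groups (Prop. \ref{expismorph}) the identity of $H_x^x$ is $1_x=\sharp(x,0)$. Note also that $(x,0)$ carries the identity local diffeomorphism, via the zero bisection $b_0:=(M\times\{0\})\cap U$ (the constant map $\phi=0$, as remarked after the definition of ``carries''). Hence $g\in\ker\tilde\varepsilon$ is equivalent to $\sharp(\Delta(g))=\sharp(x,0)$, which by item \ref{itd}) is in turn equivalent to $\Delta(g)$ and $(x,0)$ carrying a common local diffeomorphism, and so to the existence of a morphism of bi-submersions $h$ defined near $(x,0)$ with $h(x,0)=\Delta(g)$ (as well as, symmetrically, one near $\Delta(g)$ sending it to $(x,0)$).

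For the implication $(\Rightarrow)$ I would start from $g\in\ker\tilde\varepsilon$ and take such a morphism $h$ with $h(x,0)=\Delta(g)$. By the last assertion of item \ref{itd}), $h$ maps the bisection $b_0$ at $(x,0)$ to a bisection $h(b_0)$ at $\Delta(g)$; and since $h$ intertwines the source and target maps ($\bs\circ h=\bs$, $\bt\circ h=\bt$), the local diffeomorphism carried by $h(b_0)$ coincides with the one carried by $b_0$, namely the identity. Thus $h(b_0)$ is a bisection of $U$ through $\Delta(g)$ carrying the identity, as required. The main subtlety, and the point I would flag as the potential obstacle, is exactly here: item \ref{itd}) only guarantees \emph{some} common local diffeomorphism of $\Delta(g)$ and $(x,0)$, which in general is \emph{not} the identity, so one cannot read off the desired bisection from it directly; the correct move is to push the already-known identity bisection $b_0$ forward along a morphism in the direction $(x,0)\mapsto\Delta(g)$, which is why I insist on using the morphism in that direction rather than trying to upgrade a morphism $\Delta(g)\mapsto(x,0)$ to an \'etale map.

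Finally, for $(\Leftarrow)$ I would assume there is a bisection of $U$ through $\Delta(g)$ carrying the identity. Then $\Delta(g)$ and $(x,0)$ both carry the identity, i.e. they carry a common local diffeomorphism, so by item \ref{itd}) we get $\sharp(\Delta(g))=\sharp(x,0)=1_x$, that is $\tilde\varepsilon(g)=1_x$ and $g\in\ker\tilde\varepsilon$. Combining the two implications gives the claimed equivalence.
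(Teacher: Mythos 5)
Your proof is correct and follows essentially the same route as the paper's: unwind $\ker\tilde\varepsilon$ to $\sharp(\Delta(g))=\sharp(x,0)$, use item \ref{itd}) of \S\ref{subsection:morbisubbis} to obtain a morphism of bi-submersions sending $(x,0)$ to $\Delta(g)$, and push the identity bisection $U\cap(M\times\{0\})$ forward to get the desired bisection through $\Delta(g)$, with the converse obtained by reversing the argument. The subtlety you flag (using the morphism in the direction $(x,0)\mapsto\Delta(g)$ so as to transport the already-known identity bisection) is exactly the point the paper's proof relies on.
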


\begin{proof}
Let $g \in \ker\tilde{\varepsilon}$, i.e. $\Delta(g)\in \ker\sharp$. Since $(x,0)\in U\subset M \times \R^n$ also belongs to $\ker\sharp$, there exists a morphism of bi-submersions $U\to U$ such that $(x,0) \mapsto \Delta(g) $, and the image of $U\cap (M \times \{0\})$ is a bisection through $\Delta(g)$ carrying the identity. The converse follows by reversing the previous argument, which is allowed due to item \ref{itd}) in \S \ref{subsection:morbisubbis}.
\end{proof}

\subsection{\texorpdfstring{Smoothness of the isotropy groups of $H$}{Smoothness of the isotropy groups of $H$}}\label{subsection:isotrsmooth}

In this section we show that, under suitable hypotheses the isotropy groups of the holonomy groupoid are smooth. 

\begin{prop}\label{thm:integr}
Let $x \in (M,\cF)$   and let $G_x$ be the connected and simply connected Lie group integrating the Lie algebra $\g_x$. There exists a morphism of topological groups  $$\varepsilon \colon G_x \to H_x^x$$ extending the map $\tilde{\varepsilon}$ in eq. \eqref{tvare}.  Further, $\varepsilon$ maps surjectively onto $\mathring{H}_x^x$, the  connected component of the identity in $H_x^x$.
\end{prop}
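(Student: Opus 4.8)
The plan is to build $\varepsilon$ as a group homomorphism out of the simply connected group $G_x$ by using the universal property of $G_x$ together with the local morphism $\tilde\varepsilon$ that has already been constructed on a neighborhood $\tilde G_x$ of the identity. The guiding principle is the standard fact that a morphism of \emph{local} topological groups defined on a neighborhood of the identity of a simply connected (topological) group extends uniquely to a global morphism, provided the target is a topological group and the source is connected and simply connected. The technical content is therefore to verify that this extension principle applies in our setting, where the target $H_x^x$ is merely a topological group (not assumed Lie) and $\tilde\varepsilon$ is a morphism of local topological groups by Prop.~\ref{expismorph}.

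\textbf{Step 1: Extension along paths.} First I would define $\varepsilon$ on all of $G_x$ by the monodromy/path-lifting construction. Given $g\in G_x$, choose a path $\gamma$ from the identity $e$ to $g$; since $G_x$ is connected, such a path exists, and it can be subdivided $e=g_0,g_1,\dots,g_N=g$ so that each consecutive ratio $g_{j-1}^{-1}g_j$ lies in the domain $\tilde G_x$ of $\tilde\varepsilon$. One then sets $\varepsilon(g):=\tilde\varepsilon(g_0^{-1}g_1)\cdot\tilde\varepsilon(g_1^{-1}g_2)\cdots\tilde\varepsilon(g_{N-1}^{-1}g_N)$, a product taken in $H_x^x$. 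This is the classical recipe for extending a local morphism; near the identity it reproduces $\tilde\varepsilon$, giving the required extension property.

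\textbf{Step 2: Well-definedness.} The heart of the argument, and the step I expect to be the main obstacle, is showing $\varepsilon(g)$ is independent of the chosen path $\gamma$ and of the chosen subdivision. Independence of the subdivision follows from the fact that $\tilde\varepsilon$ respects products on $\tilde G_x$ (Prop.~\ref{expismorph}): refining a partition does not change the product, because whenever $g_{j-1}^{-1}g_j$ and $g_j^{-1}g_{j+1}$ and their product all lie in $\tilde G_x$, the local homomorphism property gives $\tilde\varepsilon(g_{j-1}^{-1}g_j)\tilde\varepsilon(g_j^{-1}g_{j+1})=\tilde\varepsilon(g_{j-1}^{-1}g_{j+1})$. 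Independence of the homotopy class of the path is automatic here because $G_x$ is simply connected: any two paths from $e$ to $g$ are homotopic rel endpoints, and a homotopy can be broken into small quadrilaterals each lying in a translate of $\tilde G_x$, on each of which the two ways around the square agree by the local morphism property. The only genuine subtlety is that $H_x^x$ is just a topological group, so I must be careful that the argument uses only continuity of multiplication and the algebraic local-morphism identity, never any smooth or Lie structure on the target; fortunately Prop.~\ref{expismorph} provides exactly continuity plus the product identity, so the standard monodromy argument goes through verbatim.

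\textbf{Step 3: Homomorphism and surjectivity onto the identity component.} Once $\varepsilon$ is well-defined, that it is a continuous group homomorphism follows by concatenating paths: a path to $g$ followed by a (translated) path to $h$ is a path to $gh$, so $\varepsilon(gh)=\varepsilon(g)\varepsilon(h)$; continuity is inherited from the continuity of $\tilde\varepsilon$ (Prop.~\ref{expismorph}) since near any point $\varepsilon$ is a translate of $\tilde\varepsilon$. For the final claim, surjectivity onto $\mathring H_x^x$, I would argue that the image $\varepsilon(G_x)$ is a subgroup containing the open set $\tilde\varepsilon(\tilde G_x)$, which is open in $H_x^x$ by Lemma~\ref{openim} and contains the identity. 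A subgroup containing an open neighborhood of the identity is itself open, hence also closed (its complement is a union of cosets, each open), so it is a union of connected components; being connected (as the continuous image of the connected set $G_x$) and containing the identity, it must equal precisely the identity component $\mathring H_x^x$. This yields the stated surjectivity.
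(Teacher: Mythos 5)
Your proposal is correct and follows essentially the same route as the paper: extend the local morphism $\tilde{\varepsilon}$ to $G_x$ using simple connectivity, then get surjectivity onto $\mathring{H}_x^x$ from the openness of $\tilde{\varepsilon}(\tilde{G}_x)$ (Lemma \ref{openim}). The only difference is one of packaging — the paper cites Bourbaki for the monodromy extension that you carry out by hand, proves continuity via a sequence argument using second countability of $G_x$ where you observe that $\varepsilon$ is locally a translate of $\tilde{\varepsilon}$, and derives surjectivity from the fact that a connected topological group is generated by any neighborhood of the identity where you use the equivalent ``open subgroup is closed'' argument.
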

\begin{proof} 
The map $\tilde{\varepsilon}$ is a morphism of local topological groups by Prop. \ref{expismorph} and   $G_x$ is simply connected, so $\tilde{\varepsilon}$  can be extended uniquely to a topological group morphism $\varepsilon \colon   G_x \to H_x^x$ (\cf \cite[III, Lemma 1, p. 304]{Bou}).

By Lemma \ref{openim}   $\tilde{\varepsilon}(\tilde{G}_x)$ is open in the topological group $H_x^x$, hence any point $h$ in    the  connected component of the identity of $H_x^x$ can be written as a product of elements of $\tilde{\varepsilon}(\tilde{G}_x)\cap (\tilde{\varepsilon}(\tilde{G}_x))^{-1}$. The point $h$ is then the image under $\varepsilon$ of a product of elements of $ \tilde{G}_x$, so $\varepsilon$ is surjective
 {onto $\mathring{H}_x^x$}.

As a Lie group, $G_x$ is second countable, so to prove the continuity of $\varepsilon$ it suffices \cite[\S VI.3]{Ja84} to show that if $(g_n)_{n \in \N}$ is a sequence in $G_x$ which converges to $g \in G_x$ then $(\varepsilon(g_n))_{n \in N}$ converges to $\varepsilon(g)$. The sequence $(g_n g^{-1})_{n\in\N}$ converges to the identity and we may assume that all of its terms except finitely many lie in a neighborhood of the identity in the domain of $\tilde{\varepsilon}$. We have: $$\varepsilon(g_n) = \varepsilon(g_n g^{-1} g) = \tilde{\varepsilon}(g_n g^{-1})\varepsilon(g)$$ We conclude passing to the limit and using the continuity of $\tilde{\varepsilon}$.
  \end{proof}

From Prop. \ref{thm:integr} we have an exact sequence of topological groups \begin{equation}\label{seqgr}
1 \to \ker\varepsilon \to G_x \stackrel{\varepsilon}{\to} \mathring{H}_x^x \to 1. 
\end{equation} The group $\ker\varepsilon$ plays an important role: In \S  \ref{subsection:ALintegr} we will see that the discreteness of the group $\ker\varepsilon$ is the integrability obstruction for the Lie algebroid $A_L$. For these reasons it is worth giving $\ker\varepsilon$ a special name. To this end, notice that the isotropy group $G_x$ is the same for every $x$ in the leaf $L$, and so is the group $\ker\varepsilon$.

\begin{definition}\label{dfn:essisotr}
Let $L$ be a leaf of the foliation $(M,\cF)$. The group $\ker\varepsilon$ is called the \emph{essential isotropy} group of $L$.
\end{definition}

Notice that the group $\mathring{H}_x^x$ is endowed with the quotient topology, which might be very coarse. In particular its identity element might \emph{not} form a closed subset of $\mathring{H}_x^x$ (an example of group with this property is $\R/\Q$). Hence $\mathring{H}_x^x$ might not be a Lie group, and from the continuity of $\varepsilon$ one can not conclude that the essential isotropy group is closed.   Of course, this issue arises only for singular leaves, since on a regular leaf the infinitesimal isotropy vanishes, whence the group $G_x$ is trivial.

 \begin{thm}\label{essclosed}
The identity component $\mathring{H}_x^x$ is smooth and is a Lie group if and only if the essential isotropy group is closed in $G_x$. {In this case, $\varepsilon \colon G_x \to H_x^x$ is a submersion.}\end{thm}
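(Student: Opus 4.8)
The statement claims that $\mathring{H}_x^x$ is smooth (hence a Lie group) if and only if $\ker\varepsilon$ is closed in $G_x$, and that in the closed case $\varepsilon$ is a submersion. My plan is to use the exact sequence \eqref{seqgr} of topological groups together with the fact that $\varepsilon$ is a continuous surjective homomorphism from the Lie group $G_x$ onto $\mathring{H}_x^x$ endowed with the quotient topology. The key idea is that the topological quotient $G_x/\ker\varepsilon$ is a Lie group precisely when $\ker\varepsilon$ is closed, and I must identify this algebraic-topological quotient with $\mathring{H}_x^x$ as topological groups.

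First I would address the easy direction. Suppose $\mathring{H}_x^x$ is smooth, i.e.\ a Lie group (via Lemma \ref{lem:smooth}, smoothness forces the Lie group structure; the singleton $\{1\}$ is then closed). Since $\varepsilon \colon G_x \to \mathring{H}_x^x$ is a continuous homomorphism, $\ker\varepsilon = \varepsilon^{-1}(\{1\})$ is closed in $G_x$. This gives one implication immediately.

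For the converse, assume $\ker\varepsilon$ is closed. Then $N:=\ker\varepsilon$ is a closed normal subgroup of the Lie group $G_x$, so by the standard quotient theorem $G_x/N$ is again a Lie group and the projection $G_x \to G_x/N$ is a submersion. The crucial step is to show that the induced continuous bijective homomorphism $\bar\varepsilon \colon G_x/N \to \mathring{H}_x^x$ is a \emph{homeomorphism}, where $\mathring{H}_x^x$ carries the quotient topology from $\sharp$. Continuity of $\bar\varepsilon$ is clear. For the inverse, I would use Lemma \ref{openim}: $\tilde\varepsilon$ has open image, and in fact $\tilde\varepsilon$ is an open map onto a neighborhood of the identity in $H_x^x$, because $\sharp \colon U_x^x \to H_x^x$ and the diffeomorphism $\Delta$ realize the quotient topology locally. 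Concretely, near the identity $\tilde\varepsilon$ factors through the local diffeomorphism $\Delta$ followed by $\sharp|_{U_x^x}$, and composing with a local section of $G_x \to G_x/N$ shows that $\bar\varepsilon$ restricts to an open map on a neighborhood of the identity. Since $\bar\varepsilon$ is a homomorphism of topological groups and is open near the identity, it is open everywhere, hence a homeomorphism. Transporting the Lie group structure from $G_x/N$ along $\bar\varepsilon$ endows $\mathring{H}_x^x$ with a smooth structure compatible with its quotient topology, and then $\varepsilon$ is the composition $G_x \to G_x/N \xrightarrow{\cong} \mathring{H}_x^x$ of a submersion with a diffeomorphism, hence a submersion.

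The main obstacle, and the part requiring care, is verifying that the smooth structure so obtained is compatible with Definition \ref{dfn:HLtop}, i.e.\ that $\sharp \colon U_x \to H_x$ (restricted to isotropy) is a submersion for \emph{every} path holonomy bi-submersion, not merely the one used to build $\Delta$. Here I would invoke the independence statement of Lemma \ref{lem:inde} together with the fact that any two path holonomy bi-submersions minimal at $x$ are related by an isomorphism (Lemma \ref{lemma:unique2}) intertwining the quotient maps; this guarantees the local smooth structure near the identity is canonical, and the group translations then propagate it consistently over all of $\mathring{H}_x^x$. The delicate point is purely topological: one must check that the open-ness of $\tilde\varepsilon$ near the identity genuinely upgrades to a homeomorphism $G_x/N \cong \mathring{H}_x^x$, and that no pathology of the quotient topology (of the kind illustrated by the $\R/\Q$ remark preceding the theorem) survives once $N$ is assumed closed.
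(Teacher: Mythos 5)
Your proposal follows essentially the same route as the paper: the forward direction via continuity of $\varepsilon$ and closedness of the identity, and the converse via Cartan's closed-subgroup theorem applied to $\ker\varepsilon \trianglelefteq G_x$, the exact sequence \eqref{seqgr}, and the compatibility check with Definition \ref{dfn:HLtop} through Lemmas \ref{lem:inde} and \ref{lemma:unique2}. The only difference is that you spell out explicitly why the continuous bijection $G_x/\ker\varepsilon \to \mathring{H}_x^x$ is a homeomorphism (via openness of $\tilde\varepsilon$ near the identity), a point the paper leaves implicit in the phrase ``since the sequence is exact''; this is a welcome clarification but not a different argument.
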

\begin{proof} Assume that the essential isotropy  is closed. Then it is an embedded normal Lie subgroup of $G_x$, whence it follows from Cartan's theorem \cite[Thm 4.2, p. 123]{Helgason} that the quotient $G_x / \ker\varepsilon$ is a Lie group and the projection $G_x \to G_x/\ker\varepsilon$ is a submersion. Since the sequence \eqref{seqgr} is exact, we conclude that  $\mathring{H}_x^x$ is a Lie group and $\varepsilon$ a submersion. 

Notice that the induced differentiable structure on $\mathring{H}_x^x$  is the one defined in \S \ref{subsubsection:Htop}. Indeed, for any  path holonomy bi-submersion $U$ minimal at $x$, we have that  $\tilde{G}_x$ is diffeomorphic to a neighborhood of $x$ in $U_x^x$, so $\sharp \colon U_x^x \to H_x^x$ is a submersion (see Lemma \ref{lem:inde}). \cite[Prop. 2.10 b)]{AndrSk} implies that the same holds for any bi-submersion $U$ in the path holonomy atlas covering $x$.

Conversely, if $\mathring{H}_x^x$ is a Lie group, then its identity element forms a closed subset, and since $\varepsilon$ is continuous it follows that the essential isotropy group is closed.
 \end{proof}

\begin{remark}
Going through examples, we couldn't find any foliation $(M,\cF)$ whose groups $\mathring{H}_x^x$ are not Lie groups for every $x \in M$. It may well be the case that the essential isotropy groups are always closed, however we couldn't provide a proof for this statement.
\end{remark}

Here is an example of the map $\varepsilon$:

\begin{cor}\label{ex:lin}
Let $V$ be a vector space and $G\subset GL(V)$ a closed connected subgroup, and consider the foliation on $V$ induced by the action. At $x=0$ we have that  $\varepsilon \colon G_x \to H_x^x=G$ is a covering map.
\end{cor}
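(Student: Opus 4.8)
The plan is to identify all three groups concretely and then deduce the covering property from standard Lie theory. First I would determine $\g_x$ at the fixed point $x=0$. Since the action is linear, $0$ is fixed and the tangent space to its leaf vanishes, $F_0=0$; the extension \eqref{extn:fiber} then gives $\g_0=\cF_0$, and by \cite[Prop. 1.4]{AndrSk} (recalled in Example \ref{eps:1.1}) we have $\cF_0=\g:=\Lie(G)$. Hence $G_x=G_0$ is the connected, simply connected group integrating $\g$, i.e. the universal cover $\tilde{G}$ of $G$. Next I would identify $H_0^0$: the transformation groupoid $G\ltimes V$ is a bi-submersion and an atlas for $\cF$ (exactly as for the circle action in Example \ref{eps:linact}), so $H$ is a quotient of it. Over the fixed point the isotropy of $G\ltimes V$ is all of $G\times\{0\}$, and \cite[Prop. 1.4]{AndrSk} shows that the quotient map $\sharp$ is injective there; this yields $H_0^0=G$ as topological groups. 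In particular $H_0^0$ is a Hausdorff Lie group equal to its own identity component $\mathring{H}_0^0$.

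Granting these identifications, the covering property is pure Lie theory. The morphism $\varepsilon$ is continuous (Proposition \ref{thm:integr}), so $\ker\varepsilon=\varepsilon^{-1}(\{e\})$ is closed in $G_x$. By Theorem \ref{essclosed} this closedness forces $\varepsilon\colon G_x\to\mathring{H}_0^0=G$ to be a submersion, and it is surjective by Proposition \ref{thm:integr}. Since $G_x$ and $G$ both integrate $\g$ they have equal dimension, so a surjective submersion between them is a local diffeomorphism; consequently $\ker\varepsilon$ is a discrete normal subgroup. A surjective homomorphism of connected Lie groups with discrete kernel which is a local diffeomorphism is a covering map, and as $G_x=\tilde{G}$ is simply connected it is the universal covering $\tilde{G}\to G$.

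I expect the main obstacle to be the identification $H_0^0=G$, including the verification that the quotient topology on $H_0^0$ coincides with the manifold topology of $G$; this is precisely what \cite[Prop. 1.4]{AndrSk} provides, ensuring that $\sharp$ neither identifies distinct elements of the isotropy $G\times\{0\}$ nor produces a strictly coarser topology. A useful cross-check, giving an independent proof that $\varepsilon$ is \'etale at the identity, is the following: under a morphism of bi-submersions from a path-holonomy bi-submersion $U$ to $G\ltimes V$ (as in Example \ref{ep:1.3}), the diffeomorphism $\Delta$ of Proposition \ref{localgp} composed with $\sharp$ equals, near the identity, the exponential map $\exp_G\colon\g\to G$ precomposed with the linear isomorphism $\g_x\cong\g$. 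Hence $d\varepsilon_e$ is invertible, confirming directly that $\varepsilon$ is a local diffeomorphism near $e$ and recovering the submersivity used above without appealing to Theorem \ref{essclosed}.
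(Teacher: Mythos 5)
Your proposal is correct, and your main line of argument takes a genuinely different route from the paper's. Both proofs rest on the same two identifications ($\g_0=\g$ and $H_0^0\cong G$ as topological groups, via \cite[Prop. 1.4]{AndrSk} and the fact that $V\rtimes G$ is an atlas), but from there the paper works concretely: it takes the minimal bi-submersion $U\subseteq V\rtimes\g$ with $\bt(y,u)=exp_y(u_V)$, computes from eq. \eqref{GxU} that $\Delta$ is the inverse of $\exp_{\g}\colon\g\to G_x$, and uses the morphism of bi-submersions $(y,u)\mapsto(y,\exp_{\g}(u))$ into $V\rtimes G$ to show that $\sharp\colon U_x^x\to G$ is the Lie group exponential; hence $\tilde{\varepsilon}$ is the local morphism integrating $\mathrm{id}_{\g}$, and the covering property is immediate. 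That computation is exactly your closing ``cross-check,'' so you have in effect reproduced the paper's proof as a secondary verification. Your primary argument is instead abstract: closedness of $\ker\varepsilon$ from Hausdorffness of $G$, Theorem \ref{essclosed} to get submersivity, surjectivity from Proposition \ref{thm:integr}, and a dimension count. This is a valid and arguably cleaner deduction; its one delicate point is the dimension count, which implicitly requires that the Lie group structure on $\mathring{H}_0^0$ produced by Theorem \ref{essclosed} (namely $G_x/\ker\varepsilon$) agrees with the standard one on $G$ under the topological identification $H_0^0\cong G$. This is harmless --- a topological group carries at most one compatible Lie group structure, since continuous homomorphisms of Lie groups are smooth --- but it is precisely the step that the paper's explicit computation (and your cross-check) renders unnecessary. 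What the paper's route buys is an explicit formula for $\varepsilon$ near the identity; what yours buys is independence from the particular bi-submersion model and a template that would apply whenever $H_x^x$ is known a priori to be a Lie group of the expected dimension.
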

\begin{proof} Let $x=0$.
 Recall that  we have $\g_x=\g:=Lie(G)$, so in particular $G_x$ is the simply connected Lie group integrating $\g$. Further we have 
 $H_x^x\cong G$, and  the identification  is given by (the restriction to $\{x\}\times G$ of) a natural quotient map $\psi$ from the transformation groupoid $V \rtimes G$ to the holonomy groupoid $H$
(see   \cite[Prop. 1.4]{AndrSk} and \cite[Ex. 3.7(2)]{AndrSk}).
 
Consider the  natural minimal bi-submersion, namely $U \subseteq V \rtimes \g$, with target $\bt(y,u)=exp_y(u_V)$, where $u_V$ denotes the linear vector field on $V$ induced by the infinitesimal action of $u\in \g$. It is the path holonomy  bi-submersion associated to $\{(u_i)_V\}$ for $\{u_i\}$ any basis of $\g$. Notice that $U_x^x$, for $x=0$, is a neighborhood of the origin in $\g$.
One computes using eq. \eqref{GxU} that the map  $\Delta \colon \tilde{G}_x  \to U_x^x$ is given by the inverse of the Lie group exponential map $\g \to G_x$. Further the quotient map 
$\sharp \colon  U_x^x  \to H_x^x=G$ is the Lie group exponential map, as a consequence of the fact that the morphism of bi-submersions $\phi \colon U \to V \rtimes G, (y,u)\mapsto (y,exp_{\g}(u))$ intertwines the natural quotient maps
  $\sharp \colon  U   {\to} H$ and  $\psi \colon V \rtimes G\to H$ (see  the text just before \cite[Def. 3.5]{AndrSk}). Hence 
$$\tilde{\varepsilon} \colon \tilde{G}_x \stackrel{\Delta}{\to} U_x^x \stackrel{\sharp}{\to} H_x^x=G$$ is the local Lie group morphism that integrates $id_\g$.
\end{proof}

\subsection{\texorpdfstring{Smoothness of $H_L$}{Smoothness of $H_L$}}\label{subsection:longitsmooth}

Here we show that, under suitable assumptions, the restriction of $H$ to a leaf is a Lie groupoid.  
We start with  a technical lemma:

\begin{lemma}\label{splitmor} Let $(M,\cF)$ be a foliated manifold and $x$ a point lying in a leaf $L$ of dimension $k$.
As in Prop. \ref{transversal}, in a neighborhood $W$ of $x$, choose a splitting  $\psi \colon (W,\cF_W) \cong   (I^{k},TI^{k})\times (S,\cF_S)$  and
elements $\{X_i\}_{i\le n}$ of $\cF$  inducing a basis of $\cF_x$ and compatible with the splitting\footnote{In particular  the first elements $X_1,\dots,X_{k}$ are coordinate vector fields on $L$ commuting with all $X_i$'s, and the remaining  $\ell:=n-k$ elements $X_{k+1},\dots,X_n$ vanish on $L$.}.
Let $U\subset W\times \R^{n}$ be the associated path holonomy bi-submersion. Assume for simplicity that $U=W\times B^k \times B^{\ell}$ where $B^{k},B^{\ell}$
are open neighborhoods of the origin in $\R^k, \R^{\ell}$ respectively.

 Then for any  $y_0\in L_W:= \psi^{-1}(I^k\times \{x\})$, for any $\overrightarrow{\eta_0},\overrightarrow{\hat{\eta}_0}\in B^k$ and
$\overrightarrow{\lambda_0},\overrightarrow{\hat{\lambda}_0}\in B^{\ell}$ 
  we have:
$$(y_0,\overrightarrow{\eta_0},\overrightarrow{\lambda_0}) \sim (y_0,\overrightarrow{\hat{\eta}_0},\overrightarrow{\hat{\lambda}_0})
\Leftrightarrow 
\begin{cases}(x,0,\overrightarrow{\lambda_0}) \sim (x,0,\overrightarrow{\hat{\lambda}_0})\\
\overrightarrow{\eta_0}=\overrightarrow{\hat{\eta}_0}\end{cases}$$
\end{lemma}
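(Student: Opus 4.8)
The plan is to exploit the product nature of the situation coming from the commuting of the leaf-direction fields. Writing $y_0=\psi^{-1}(\overrightarrow{s_0},x)$ with $\overrightarrow{s_0}\in I^k$ and $x=\psi^{-1}(\overrightarrow{0},x)$, the first step is to factor the target map. Since $X_1,\dots,X_k$ are the coordinate fields $\partial_{s_1},\dots,\partial_{s_k}$ and commute with all the $X_i$ (as recorded in the statement), the time-$1$ flow defining $\bt$ splits into the $I^k$-translation and the $S$-flow: $\bt(y_0,\overrightarrow{\eta},\overrightarrow{\lambda})=\psi^{-1}(\overrightarrow{s_0}+\overrightarrow{\eta},\,\Phi^{\overrightarrow{\lambda}}(x))$, where $\Phi^{\overrightarrow{\lambda}}$ is the time-$1$ flow on $S$ of $\sum_{i=1}^{\ell}\lambda_i X_{k+i}$. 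Because $X_{k+1},\dots,X_n$ vanish on $L$, we get $\Phi^{\overrightarrow{\lambda}}(x)=x$, so $\bt(y_0,\overrightarrow{\eta},\overrightarrow{\lambda})=\psi^{-1}(\overrightarrow{s_0}+\overrightarrow{\eta},x)$. I would record that each relevant point is carried by its \emph{constant} bisection (the graph of a constant map), whose associated local diffeomorphism is of the product form $(\overrightarrow{s},z)\mapsto(\overrightarrow{s}+\overrightarrow{\eta},\,\Phi^{\overrightarrow{\lambda}}(z))$.

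The crux is a single conjugation identity. Introduce the pure leaf-translations $p:=\sharp(x,\overrightarrow{s_0},0)$ and $q:=\sharp(x,\overrightarrow{s_0}+\overrightarrow{\eta_0},0)$, which are elements of $H$ from $x$ to $y_0$ and from $x$ to $y_1:=\psi^{-1}(\overrightarrow{s_0}+\overrightarrow{\eta_0},x)$, respectively. I claim the identity $(\star)$: $\sharp(y_0,\overrightarrow{\eta_0},\overrightarrow{\lambda})=q\cdot\sharp(x,0,\overrightarrow{\lambda})\cdot p^{-1}$ for every $\overrightarrow{\lambda}\in B^{\ell}$. To prove $(\star)$ I would compute the local diffeomorphism carried by the right-hand side by composing the product-form diffeomorphisms carried by $q$, $\sharp(x,0,\overrightarrow{\lambda})$ and $p^{-1}$: the leaf-translations cancel ($-\overrightarrow{s_0}$ then $+(\overrightarrow{s_0}+\overrightarrow{\eta_0})$) and the composite germ at $y_0$ equals $(\overrightarrow{s},z)\mapsto(\overrightarrow{s}+\overrightarrow{\eta_0},\Phi^{\overrightarrow{\lambda}}(z))$, which is exactly what $\sharp(y_0,\overrightarrow{\eta_0},\overrightarrow{\lambda})$ carries. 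By item \ref{itd}) of \S\ref{subsection:morbisubbis} (two points are related by a morphism of bi-submersions iff they carry a common local diffeomorphism), the two sides then coincide in $H$, giving $(\star)$.

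Both implications of the Lemma follow formally from $(\star)$. For the direction $\Rightarrow$, I first note that the equivalence relation $\sim$ preserves $\bt$ (morphisms of bi-submersions intertwine the target maps); comparing $\bt(y_0,\overrightarrow{\eta_0},\overrightarrow{\lambda_0})=\psi^{-1}(\overrightarrow{s_0}+\overrightarrow{\eta_0},x)$ with the analogous expression for the hatted triple forces $\overrightarrow{\eta_0}=\overrightarrow{\hat{\eta}_0}$. Hence $p$ and $q$ are the same for both triples, and feeding $\sharp(y_0,\overrightarrow{\eta_0},\overrightarrow{\lambda_0})=\sharp(y_0,\overrightarrow{\hat{\eta}_0},\overrightarrow{\hat{\lambda}_0})$ into $(\star)$ and using left/right cancellation in the groupoid $H$ yields $\sharp(x,0,\overrightarrow{\lambda_0})=\sharp(x,0,\overrightarrow{\hat{\lambda}_0})$, i.e. $(x,0,\overrightarrow{\lambda_0})\sim(x,0,\overrightarrow{\hat{\lambda}_0})$. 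For $\Leftarrow$, the two hypotheses $\overrightarrow{\eta_0}=\overrightarrow{\hat{\eta}_0}$ and $\sharp(x,0,\overrightarrow{\lambda_0})=\sharp(x,0,\overrightarrow{\hat{\lambda}_0})$ are substituted directly into $(\star)$ to obtain $\sharp(y_0,\overrightarrow{\eta_0},\overrightarrow{\lambda_0})=\sharp(y_0,\overrightarrow{\hat{\eta}_0},\overrightarrow{\hat{\lambda}_0})$.

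The main obstacle is the careful justification of $(\star)$, and this is where I expect the real work to lie. One must verify that the product-form diffeomorphisms are genuinely carried by $p$, $q$, $\sharp(x,0,\overrightarrow{\lambda})$, and that their composition computes the groupoid product; this is done inside the composite bi-submersions $U\circ U$ (and one further iterate), exhibiting explicit bisections and using that composites of bisections carry composites of the corresponding diffeomorphisms, together again with item \ref{itd}). A secondary, purely technical point is arranging the domains so that the auxiliary points lie in $U$ (that is, $\overrightarrow{s_0},\overrightarrow{s_0}+\overrightarrow{\eta_0}\in B^k$) and so that the germ identities hold on a common neighborhood of $x$; this is harmless after shrinking $W$ and taking $B^k$ large enough. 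Conceptually, the commuting of the leaf fields is precisely what lets the leaf-translations $p$ and $q$ pass through the transverse holonomy without altering it, and $(\star)$ is the concrete expression of the fact that the holonomy of the product foliation $(I^k,TI^k)\times(S,\cF_S)$ splits as a product with a trivial (pair-groupoid) factor over $I^k$.
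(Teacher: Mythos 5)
Your argument is correct, but it runs along a genuinely different track from the paper's. The paper stays entirely at the level of morphisms of bi-submersions $\Phi\colon U\to U$: from the product form of the target map (your first step, identical in both proofs) it deduces that any such morphism must preserve the $z$- and $\overrightarrow{\eta}$-components and that its $\overrightarrow{\lambda}$-component satisfies a flow identity; it then \emph{transports} a morphism defined near $(x,0,\overrightarrow{\lambda_0})$ to one defined near $(y_0,\overrightarrow{\eta_0},\overrightarrow{\lambda_0})$ by translating its defining function in the $I^k$-direction, using the invariance of the $X_i$ along $I^k$. You instead pass to the quotient $H$ and encode the same invariance in the conjugation identity $(\star)$ by leaf-translation elements $p,q$, reducing both implications to cancellation in the groupoid. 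Your route is more conceptual (it makes visible that the holonomy of the product foliation splits off a pair-groupoid factor), but it pays for this by needing the groupoid multiplication of $H$, the fact that composite bisections carry composite diffeomorphisms, and item \ref{itd}) of \S\ref{subsection:morbisubbis}, whereas the paper's argument needs only the definition of $\sim$ and never leaves $U$. One caveat in your write-up: the auxiliary points $(x,\overrightarrow{s_0},0)$ and $(x,\overrightarrow{s_0}+\overrightarrow{\eta_0},0)$ need not lie in the \emph{given} $U=W\times B^k\times B^{\ell}$, and ``taking $B^k$ large enough'' is not available since $U$ is fixed in the statement; the correct repair is to define $p$ and $q$ as finite products of small leaf-translations (each in $U$), which is legitimate because the path holonomy atlas is closed under composition and $(\star)$ is stable under such factorizations. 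This is routine but should be said; the paper's transport-of-morphisms argument avoids the issue altogether because it only ever works in a neighborhood of the two points being compared.
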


\begin{remark}\label{lem:splitmor}
Lemma \ref{splitmor} is saying the the equivalence relation $\sim$ introduced in \S \ref{sec:groatlas}, restricted to  $U_{L_W}:=L_W\times B^k \times B^{\ell}$, is trivial in the $L_W$ and $B^k$ components, while in the $B^{\ell}$ component it is the restriction of $\sim$ to
 $U^x_x=\{x\}\times \{0\} \times B^{\ell}$.
\end{remark}

\begin{proof} Notice first that, since  any of the $\{X_i\}_{i \le k}$ commutes with any of the 
$\{X_i\}_{i > k}$, the target map sends $(z, \overrightarrow{\eta},\overrightarrow{\lambda})\in U$ to the element
\begin{equation}\label{formtar}
\big(\overrightarrow{z_I}+ \overrightarrow{\eta}\;,\;exp_z(\sum_{i>k}\lambda_i X_i)\big)\in I^k\times S=W,
\end{equation}
where $\overrightarrow{z_I}$ is the first component of $z\in W=I^k\times S$. 

Consider now an arbitrary morphism of bi-submersions $\Phi \colon U \to U$. From the first component of eq. \eqref{formtar} we deduce that 
$\Phi$ is of the form
\begin{equation}\label{formf}
(z, \overrightarrow{\eta},\overrightarrow{\lambda})\mapsto (z, \overrightarrow{\eta}, \overrightarrow{f}(z, \overrightarrow{\eta},\overrightarrow{\lambda})),
\end{equation}
for some smooth $\overrightarrow{f} \colon U \to B^{\ell}$,
and from the second component of eq. \eqref{formtar} we deduce further that
\begin{equation}\label{expind}
exp_z(\sum_{i>k}\lambda_i X_i)=exp_z(\sum_{i>k}f_i(z, \overrightarrow{\eta},\overrightarrow{\lambda}) X_i)
\end{equation}
for all $z\in W$ and $\overrightarrow{\lambda}\in B^{\ell}$ (in particular the r.h.s. is independent of $\overrightarrow{\eta}$). Conversely, any map $U\to U$ of the form \eqref{formf} satisfying eq. \eqref{expind} is a morphism of bi-submersions.

To prove the implication ``$\Leftarrow$'', assume that $(x,0,\overrightarrow{\lambda_0}) \sim (x,0,\overrightarrow{\hat{\lambda}_0})$. Then there exists a   morphism $\Phi$ defined near $(x,0,\overrightarrow{\lambda_0})$ mapping one point to the other, that is, the function $\overrightarrow{f}$ satisfies $\overrightarrow{f}(x,0,\overrightarrow{\lambda_0})=\overrightarrow{\hat{\lambda}_0}$.
Given $y_0\in L_W=\psi^{-1}(I^k\times \{x\})$ and $\overrightarrow{\eta_0}\in B^k$, define
$\Phi'$ in a neighborhood of $(y_0, \overrightarrow{\eta}_0,\overrightarrow{\lambda}_0)$ in $U$ by 
$$(z,  \overrightarrow{\eta},\overrightarrow{\lambda})\overset{\Phi'}{\mapsto} (z,  \overrightarrow{\eta}, \overrightarrow{f}(z-y_0, \overrightarrow{\eta}-\overrightarrow{\eta_0},\overrightarrow{\lambda}))$$
where $z-y_0=(z_I-y_0,z_S)\in I^k\times S=W$. 
The map $\Phi'$ is a morphism of bi-submersions for it satisfies eq. \eqref{expind}, as a consequence of the fact that $\Phi$ satisfies eq. \eqref{expind} and of the invariance of the vector fields $X_i$ in the $I^k$-direction. Further $\Phi'$ maps
$(y_0,\overrightarrow{\eta_0},\overrightarrow{\lambda_0})$ to $(y_0,\overrightarrow{\eta_0}, \overrightarrow{\hat{\lambda}_0})$, showing that these two elements are equivalent. The converse implication is proven similarly.
\end{proof}
 
 \begin{thm}\label{HL} Let $L$ be a leaf and $H_L \rightrightarrows L$ the transitive   groupoid $H_L:=\bs^{-1}(L)= \bt^{-1}(L)$. The essential isotropy group (Def. \ref{dfn:essisotr}) of $L$ is closed if and only if
$H_L$ is smooth and is a Lie groupoid.
\end{thm}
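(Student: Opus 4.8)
The plan is to treat the two implications separately, reducing the smoothness of $H_L$ to a pointwise submersion statement via Lemma \ref{lem:smooth} and Prop. \ref{prop:allsmooth}, and then feeding in the splitting description of the equivalence relation $\sim$ from Lemma \ref{splitmor} together with Theorem \ref{essclosed} on the isotropy.

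First I would dispatch the easy implication. Suppose $H_L$ is smooth; by Lemma \ref{lem:smooth} it is then a Lie groupoid, so $\bs\colon H_L\to L$ is a submersion and $H_x=\bs^{-1}(x)$ is an embedded submanifold. Since $H_L$ is transitive (the group $\exp(\cF)$ acts transitively on $L$), $\bt|_{H_x}\colon H_x\to L$ is a submersion, whence $H_x^x=(\bt|_{H_x})^{-1}(x)$ is an embedded Lie subgroup; in particular $\{e\}$ is closed in $H_x^x$. As $\varepsilon\colon G_x\to H_x^x$ is continuous (Prop. \ref{thm:integr}), the essential isotropy group $\ker\varepsilon=\varepsilon^{-1}(\{e\})$ is closed in $G_x$. (Equivalently, $\mathring H_x^x$ is now a Lie group, so closedness follows from the converse direction of Theorem \ref{essclosed}.)

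For the substantive implication, assume the essential isotropy group is closed. By Lemma \ref{lem:smooth} it suffices to prove $H_L$ smooth, and by Prop. \ref{prop:allsmooth} it suffices to produce, for each $x\in L$, a path holonomy bi-submersion minimal at $x$ for which $\sharp\colon U_L\to H_L$ is a submersion. I would take $U=W\times B^k\times B^\ell$ adapted to a splitting $\psi$ at $x$ as in Lemma \ref{splitmor}. Being a submersion is local in the source, so I first check it at a point $u=(y_0,\overrightarrow{\eta_0},\overrightarrow{\lambda_0})\in U$ lying over the central plaque $L_W$. By Remark \ref{lem:splitmor}, near $u$ the relation $\sim$ is the product of the trivial relation in the $L_W$- and $B^k$-directions with the restriction of $\sim$ to the isotropy slice $U_x^x=\{x\}\times\{0\}\times B^\ell$. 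Now closedness of $\ker\varepsilon$ gives, through Theorem \ref{essclosed}, that $\mathring H_x^x$ is a Lie group and that $\tilde\varepsilon$ of eq. \eqref{tvare} is a submersion; since $\Delta\colon\tilde G_x\to U_x^x$ is a diffeomorphism (Prop. \ref{localgp}) and $\tilde\varepsilon=\sharp\circ\Delta$, the restriction $\sharp\colon U_x^x\to H_x^x$ is itself a submersion onto an open neighborhood of the identity in $\mathring H_x^x$. Assembling the product, $U_{L_W}/\sim$ is locally diffeomorphic to $L_W\times B^k\times\sharp(U_x^x)$ and $\sharp$ is a submersion near $u$.

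It remains to promote this to all of $U_L$ and to upgrade to a Lie groupoid. For a point $u\in U_L$ lying over a plaque where the (immersed) leaf re-enters $W$, I would recenter: by Lemma \ref{lemma:unique2} there is a morphism of bi-submersions from a neighborhood of $u$ to a path holonomy bi-submersion minimal at $\bs(u)\in L$, and such a morphism is a local diffeomorphism intertwining the quotient maps $\sharp$ (as in Lemma \ref{lem:inde}). Since the essential isotropy group is an invariant of the whole leaf (Definition \ref{dfn:essisotr}) and $\g_{\bs(u)}\cong\g_x$ (Lemma \ref{gxtri}), Theorem \ref{essclosed} applies verbatim at $\bs(u)$, so the central-plaque computation transports and $\sharp$ is a submersion near $u$ as well. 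The gluing of these local quotients into a single differentiable structure on $H_L$, making every $\sharp$ a submersion, is exactly the inductive argument in the proof of Prop. \ref{prop:allsmooth}, whose hypothesis we have now verified; Lemma \ref{lem:smooth} then upgrades smoothness to the structure of a (transitive) Lie groupoid. I expect the main obstacle to be precisely the reduction carried out above: confining the non-trivial part of $\sim$ to the isotropy direction $B^\ell$ by means of Lemma \ref{splitmor}, so that the closedness hypothesis becomes, via Theorem \ref{essclosed}, exactly what is needed for the quotient to be smooth, while the leaf and transverse directions contribute only manifestly smooth trivial factors.
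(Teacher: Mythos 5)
Your proof is correct and follows essentially the same route as the paper's: both directions hinge on Theorem \ref{essclosed}, and the forward implication verifies the hypothesis of Prop.~\ref{prop:allsmooth} by using Lemma \ref{splitmor} to factor the equivalence relation on $U_{L_W}$ into trivial leafwise/transverse-to-isotropy factors times the isotropy relation on $U_x^x$, which Theorem \ref{essclosed} turns into a submersion, before concluding with Lemma \ref{lem:smooth}. Your additional recentering step for points of $U_L$ over plaques other than the central one addresses a detail the paper glosses over (though the appropriate reference for it is \cite[Prop.~2.10 b)]{AndrSk}, recalled just before Lemma \ref{lemma:unique2}, rather than Lemma \ref{lemma:unique2} itself, which only compares path holonomy bi-submersions minimal at the \emph{same} point), but this does not change the substance of the argument.
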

\begin{proof} If $H_L$ is a (transitive) Lie groupoid then $H_x^x$ is a Lie group, so Thm. \ref{essclosed} implies that the essential isotropy group of $L$ is closed. 

For the converse, we show that the assumptions of Prop. \ref{prop:allsmooth} are satisfied, and then apply Lemma \ref{lem:smooth}. Let $x\in L$, let $U$ be a bi-submersion as in Lemma \ref{splitmor}, and set $U_{L_W}:=L_W\times B^k \times B^{\ell}$. 
  
Consider first $U_x^x$. {Thm. \ref{essclosed}} 
implies that   
$\sharp|_{U_x^x} \colon U^x_x \to H^x_x$ is a submersion between smooth manifolds.
Let $S^x\subset U_x^x$ be  a  smooth submanifold through $x$ transverse to the fibres of 
$\sharp|_{U_x^x}$, and without loss of generality assume that $\sharp S^x=\sharp U_x^x$ (an open subset  of $H_x^x$,
 by Lemma \ref{openim}).
Clearly $\sharp|_{S^x}\colon S^x \to \sharp U_x^x$ is  a diffeomorphism.

 Notice that $\sharp U_{L_W}$ is an open neighborhood of $x$ in  $H_L$, since $\sharp U$ is open in $H$ ({see the beginning of  \S \ref{subsubsection:Htop}}).  
 Since by definition 
the fibers of $\sharp \colon U_{L_W} \to H_L$ are given by the equivalence classes of $\sim$ in $U_{L_W}$, 
Lemma \ref{splitmor} (see also Remark \ref{lem:splitmor}) implies that at every point $(y,\overrightarrow{\eta},\overrightarrow{\lambda})$ of $U_{L_W}$ we have
\begin{equation*} \text{(Fibre of $\sharp \colon U_{L_W} \to H_L$ through $(y,\overrightarrow{\eta},\overrightarrow{\lambda}$))
 $= \{y\}\times \{\overrightarrow{\eta}\} \times $(Fibre of $\sharp|_{U_x^x}$ through $(x,0,\overrightarrow{\lambda}) )$}. 
\end{equation*}  
From this we conclude that $T^x:=L_W\times B^k\times S^x$ is a smooth submanifold of $U_{L_W}$ through $(x,0,0)$ transverse to the $\sharp$-fibres.  Therefore
$\sharp \colon U_{L_W} \to H_L$ restricts to a homeomorphism
$$\sharp|_{T^x} \colon T^x \to \sharp U_{L_W},$$
which endows $\sharp U_{L_W}$ with a differentiable structure such that that $\sharp \colon U_{L_W} \to H_L$ is a submersion. As $x\in L$ is arbitrary, we showed that the assumptions of Prop. \ref{prop:allsmooth} are satisfied.
%
\end{proof}


\begin{prop}\label{source} Let $x\in M$ and $L$ the leaf of $\cF$ at $x$. Its essential isotropy group is closed if and only if the source fiber $H_x$ of the holonomy groupoid is smooth (in the sense of \S \ref{subsubsection:Htop}). In this case, 
 $\bt \colon H_x \to L$ is a principal $H_x^x$-bundle.
\end{prop}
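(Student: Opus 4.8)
The plan is to prove the equivalence by reducing both directions to Theorems \ref{HL} and \ref{essclosed}, with the only genuinely new work being the transfer of smoothness from the source fiber $H_x$ to the isotropy group $H_x^x$ in the reverse implication.

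For the direction ``essential isotropy closed $\Rightarrow$ $H_x$ smooth'', I would invoke Theorem \ref{HL}: closedness of $\ker\varepsilon$ makes $H_L$ a transitive Lie groupoid. Since its source map $\bs\colon H_L\to L$ is then a submersion, the fiber $H_x=\bs^{-1}(x)$ is an embedded submanifold of $H_L$, and the resulting differentiable structure is the one of \S\ref{subsubsection:Htop}, because $\sharp\colon U_x\to H_x$ is just the restriction to source fibers of the submersion $\sharp\colon U_L\to H_L$. The principal-bundle statement is then the standard structure theory of transitive Lie groupoids: right multiplication by $H_x^x$ is a free action on $H_x$ whose orbits are exactly the fibers of $\bt\colon H_x\to L$ (given $g,g'\in H_x$ with $\bt(g)=\bt(g')$ one checks $g^{-1}g'\in H_x^x$), and transitivity of $H_L$ makes $\bt$ a surjective submersion; hence $\bt\colon H_x\to L$ is a principal $H_x^x$-bundle.

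For the converse ``$H_x$ smooth $\Rightarrow$ essential isotropy closed'', the first and main step is to show that $\bt\colon H_x\to L$ is a submersion. Fixing a bi-submersion $U$ in the path holonomy atlas, the fibers of $\sharp\colon U_x\to H_x$ are contained in the fibers of $\bt\colon U_x\to L$, since morphisms of bi-submersions commute with the target map (so $\bt\circ\sharp$ equals the target map of $U$ on $U_x$). Choosing locally a submanifold $T\subset U_x$ transverse to the $\sharp$-fibers, on which $\sharp$ is a diffeomorphism onto its image, and using that $\bt\colon U_x\to L$ is a submersion, one obtains that $\bt|_T$ is a submersion, exactly as in the proof of Lemma \ref{lem:smooth}. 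Hence $\bt\colon H_x\to L$ is a submersion.

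Once this is established, $H_x^x=\bt^{-1}(x)$ is an embedded submanifold of $H_x$, and since $U_x^x=\sharp^{-1}(H_x^x)$, restricting the submersion $\sharp\colon U_x\to H_x$ over the embedded submanifold $H_x^x$ shows that $\sharp\colon U_x^x\to H_x^x$ is a submersion; that is, $H_x^x$ is smooth in the sense of \S\ref{subsubsection:Htop}. Running the argument of Lemma \ref{lem:smooth} for $H_x^x$ (viewed as a groupoid over a point) makes the group operations smooth, so $\mathring{H}_x^x$ is a Lie group, and the converse half of Theorem \ref{essclosed} then yields that $\ker\varepsilon$ is closed in $G_x$. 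The main obstacle I expect is precisely the submersivity of $\bt\colon H_x\to L$ in this reverse direction, as it is the only point that converts the hypothesis on $H_x$ into smoothness of $H_x^x$; the remaining care lies in checking that the several differentiable structures produced all agree with the one of \S\ref{subsubsection:Htop}.
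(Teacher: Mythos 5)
Your proposal is correct and follows essentially the same route as the paper: the forward direction and the principal-bundle statement come from Theorem \ref{HL} (plus the standard fact that source fibers of transitive Lie groupoids are principal bundles), and the converse reduces to showing $H_x^x$ is smooth and then invoking the converse half of Theorem \ref{essclosed}. The only (harmless) difference is in the mechanics of the converse: the paper stays upstairs in $U_x$, observing that $U_x^x=(\bt|_{U_x})^{-1}(x)$ is an embedded, $\sharp$-saturated submanifold and pushing it down via the local normal form of submersions, whereas you first establish that $\bt\colon H_x\to L$ is a submersion and then pull back $H_x^x=\bt^{-1}(x)$ --- both arguments yield that $\sharp\colon U_x^x\to H_x^x$ is a submersion.
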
 
 \begin{proof}
As we showed in Thm. \ref{HL}, if the essential isotropy is closed then $H_L$  is  smooth, whence $H_x$   is smooth as well. For the converse, it suffices to show that $H_x^x$ is smooth, because it follows then from Thm. \ref{essclosed} that the essential isotropy group is closed.

Let $(U,\bt,\bs)$ be a  bi-submersion in the path holonomy atlas with $x\in \bs(U)$. Put $U_x = \bs^{-1}(x)$ and $\bt_x = \bt\mid_{U_x} : U_x \to L$. The latter is a   submersion, as a consequence of the definition of bi-submersion. Hence $U_x^x=\bt_x^{-1}(x)$ is an embedded submanifold of $U_x$ (if it is not empty). By assumption, $H_x$ is endowed with a smooths structure such that $\sharp \colon U_x \to H_x$ is a submersion.
$U_x^x$ is a union of   fibers of   $\sharp$, by the definition of the equivalence relation $\sim$. This and the local form theorem for submersions imply that $\sharp U_x^x$ is a submanifold of $H_x$ and that $\sharp|_{U_x^x} \colon U_x^x \to \sharp U_x^x$ is a submersion. As the bi-submersion $U$ was arbitrary, it follows that $H_x^x$ is smooth in the sense of \S \ref{subsubsection:Htop}.

To prove the second statement, use Thm. \ref{HL} and recall that
any source fibre of a Lie groupoid is a principal bundle for the corresponding isotropy group \cite[Thm. 5.4]{MM}.
\end{proof}

Proposition \ref{source} and Thm.  \ref{ALintegr} are crucial for the generalization of Heitsch's \cite[Thm. 3]{Heitsch} to singular foliations. We will explain this in a subsequent article \cite{AZ2}.

\section{Essential isotropy and integrability}\label{section:integr} 
We prove  that the Lie algebroid $A_L$ is integrable provided the essential isotropy group of the leaf $L$ is discrete (\S \ref{subsection:ALintegr}). Then, in \S \ref{subsection:secordvf} we provide a sufficient condition for the discreteness of the essential isotropy group.

\subsection{\texorpdfstring{Integrability of $A_L$}{Integrability of AL}}\label{subsection:ALintegr}

Fix a leaf $L$ and  consider the associated transitive Lie algebroid, $A_L=\cup_{x\in L} \cF_x$, 
see \S \ref{subsection:ALintro}. Here we discuss the integrability of this Lie algebroid, and show that the integrability of $A_L$ implies the smoothness of the $\bs$-fibers of the holonomy groupoid. When the $\bs$-fibers are smooth, then the construction of the pseudodifferential calculus in \cite{AndrSk1} is simplified a great deal because then the left-regular representation is defined.

\begin{thm}\label{ALintegr}
Let $(M,\cF)$ be a foliation and $L$ a leaf. The transitive groupoid $H_L$ is smooth and integrates the Lie algebroid $A_L   = \cup_{x\in L}\cF_x$ if and only if the essential isotropy group of $L$  (Def. \ref{dfn:essisotr}) is discrete.  
\end{thm}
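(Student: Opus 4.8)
The plan is to reduce everything to the results already in hand together with a single dimension count on the isotropy. First observe that a discrete subgroup of a Lie group is automatically closed; hence if the essential isotropy $\ker\varepsilon$ is discrete it is in particular closed, and Theorem \ref{HL} guarantees that $H_L$ is a (transitive) Lie groupoid. Conversely, if $H_L$ integrates $A_L$ then it is smooth, so Theorem \ref{HL} again yields that $\ker\varepsilon$ is closed. Thus in both implications we may assume $\ker\varepsilon$ closed and $H_L$ a Lie groupoid (so that, by Prop. \ref{source}, each $H_x$ is smooth and $\sharp\colon U_x\to H_x$ is a submersion), and the entire content of the statement is to compare $\Lie(H_L)$ with $A_L$ and to determine exactly when they agree.

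Second, I would produce a canonical morphism of transitive Lie algebroids $\Psi\colon A_L\to \Lie(H_L)$ covering $\mathrm{id}_L$. Fixing $x\in L$ and a path holonomy bi-submersion $U$ minimal at $x$, minimality identifies $A_L|_x=\cF_x$ with $T_{(x,0)}U_x$, and composing with the differential at $(x,0)$ of $\sharp\colon U_x\to H_x$ gives a linear map $A_L|_x\to T_{\mathrm{id}_x}H_x=\Lie(H_L)|_x$. Independence from the chosen bi-submersion follows from Lemma \ref{lemma:unique2} and the argument of Lemma \ref{lem:inde} (isomorphisms of minimal bi-submersions intertwine $\sharp$), so $\Psi$ is well defined and smooth in $x$. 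It intertwines the anchors, since $\bt_H\circ\sharp=\bt_U$ and the differential of $\bt_U$ along $U_x$ is exactly the evaluation $ev_x\colon\cF_x\to T_xL$. Moreover, restricted to the isotropy fibres $\Psi$ is precisely $d\varepsilon\colon\g_x\to\Lie(H_x^x)$, by the construction of $\tilde\varepsilon$ and $\varepsilon$ in \S\ref{subsec:intoHxx}.

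Third comes the dimension count. Because $\sharp\colon U_x\to H_x$ is a submersion, $\Psi$ is fibrewise surjective and
\begin{equation*}
\dim\ker(\Psi|_x)=\dim U_x-\dim H_x=(\dim L+\dim\g_x)-(\dim L+\dim \mathring H_x^x)=\dim\g_x-\dim\mathring H_x^x,
\end{equation*}
which, by the exact sequence \eqref{seqgr} together with Theorem \ref{essclosed} (so $\varepsilon$ is a submersion with closed kernel $\ker\varepsilon$), equals $\dim\ker\varepsilon$. Comparing the two short exact sequences $0\to\g_x\to A_L|_x\to T_xL\to 0$ and $0\to\Lie(H_x^x)\to\Lie(H_L)|_x\to T_xL\to 0$, the five lemma shows that $\Psi$ is a fibrewise isomorphism if and only if its restriction $d\varepsilon$ to the isotropy is, i.e. if and only if $\dim\ker\varepsilon=0$. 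Hence $\Psi$ is an isomorphism of Lie algebroids exactly when $\ker\varepsilon$ is discrete. This settles both directions: if $\ker\varepsilon$ is discrete then $\Psi$ identifies $\Lie(H_L)$ with $A_L$, so $H_L$ integrates $A_L$; while if $H_L$ integrates $A_L$ then $\Lie(H_x^x)=\g_x$ forces $\dim\mathring H_x^x=\dim\g_x$, whence $\dim\ker\varepsilon=0$ and the closed group $\ker\varepsilon$ is discrete. Note this is fully consistent with Theorem \ref{HL}: closedness alone only makes $H_L$ smooth (integrating a quotient algebroid in which $\Lie(\ker\varepsilon)$ has been divided out), and discreteness is precisely the extra condition needed to recover $A_L$ on the nose.

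The step I expect to be the main obstacle is verifying that $\Psi$ is a genuine morphism of Lie algebroids, and not merely an anchor-compatible bundle map: one must check bracket-compatibility. I would handle this by passing to sections, associating to $X\in\cF$ the right-invariant vector field on $H_L$ generated by the one-parameter family of bisections that $X$ determines through $U$, and showing that $\sharp$ intertwines the Lie bracket of $\cF$ (modulo $I_L\cF$, as in Prop. \ref{A_Lvb}) with the bracket of right-invariant vector fields. This is the infinitesimal counterpart of the bisection-product computation underlying Prop. \ref{expismorph}, and reuses the lift calculus of Lemma \ref{lifti}.
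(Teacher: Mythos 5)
Your argument is correct in outline but takes a genuinely different route from the paper. The paper proves this theorem by \emph{integrating up}: using the splitting theorem it identifies each minimal path holonomy bi-submersion $U_\alpha$ over a plaque $L_\alpha$ with a neighbourhood of the identity in $(L_\alpha\times L_\alpha)\times G_x$, glues these into a local Lie groupoid $\Omega$ integrating $A_L$, assembles the quotient maps into a morphism of local groupoids $\phi\colon\Omega\to H_L$, identifies $\ker\phi$ fibrewise with $\ker\varepsilon$, and reads off both implications from the extension $0\to\ker\phi\to\Omega\to H_L\to 0$. You instead \emph{differentiate down}: you first use Theorem \ref{HL} (plus the fact that a discrete subgroup of a Hausdorff group is closed) to reduce both implications to the case where $H_L$ is already a Lie groupoid, then build an anchor-compatible, fibrewise surjective bundle map $\Psi\colon A_L\to\Lie(H_L)$ and show by the dimension count $\dim\ker(\Psi|_x)=\dim\g_x-\dim\mathring H_x^x=\dim\ker\varepsilon$ (valid by Prop.~\ref{source} and Thm.~\ref{essclosed}) together with the five lemma that $\Psi$ is a fibrewise isomorphism precisely when $\ker\varepsilon$ is discrete. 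This is shorter and leans on general Lie theory rather than on an explicit BCH construction; what the paper's route buys in exchange is that the bracket compatibility is automatic (one never has to check that a map of algebroids respects brackets, only that groupoid-level maps are morphisms, which reduces to the bisection calculus of \S\ref{subsection:morbisubbis}), and it yields the extra structural statement that a neighbourhood of $L$ in $H_L$ is the quotient of $\Omega$ by a discrete normal subgroupoid.

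The one place where your proposal is genuinely incomplete is exactly the one you flag: the verification that $\Psi$ respects brackets. This is not a formality. It requires showing that a vertical lift $\hat Y\in C^{\infty}(U;\ker d\bs)$ of $X\in\cF$ is $\sharp$-projectable to $\sharp U_L$, that its projection is the restriction of the right-invariant vector field determined by $\Psi([X])$, and only then does $[\hat Y,\hat Y']$ being a lift of $[X,X']$ give the bracket identity; the projectability step in particular needs the full strength of item \ref{itd}) of \S\ref{subsection:morbisubbis} (morphisms of bi-submersions carry bisections to bisections carrying the same local diffeomorphism), i.e.\ essentially the same bisection computation that underlies the paper's proof that $\phi$ is a morphism of local groupoids. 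Two smaller points to make precise: the identification $\cF_x\cong T_{(x,0)}U_x$ should be taken via the lifts $Y_i|_{(x,0)}$ (linearly independent by the proof of Lemma \ref{lemma:unique2}) rather than the coordinate vectors, so that the restriction of $\Psi|_x$ to $\g_x$ really is $d\tilde\varepsilon=d\sharp\circ d\Delta$ as in eq.~\eqref{ElocformU}; and smoothness of $\Psi$ in $x$ should be justified by working over a single splitting chart, where $\Psi$ is $d\sharp$ restricted to the smooth subbundle $\ker d\bs|_{L_\alpha}$ of $TU|_{L_\alpha}$. With these points supplied, your argument is a valid alternative proof.
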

\begin{remark}
{Notice that the Lie algebroid $A_L$ integrates to $H_L$ if{f} the Lie algebra $\g_x$ integrates to $\mathring{H}_x^x$ (for any $x\in L$), for both conditions are equivalent to the discreteness of the essential isotropy group of $L$.}
\end{remark}
\begin{proof} It is known that every Lie algebroid integrates to a \emph{local} Lie groupoid.
We start by describing explicitly a local Lie groupoid that integrates $A_L$. Recall from \S \ref{subsection:fol} that the leaf $L$ is endowed with the longitudinal smooth structure. Cover $L$ by open subsets $W_{\alpha}$ of $M$ as in the splitting theorem (Prop. \ref{transversal}), i.e. so that there exists an isomorphism of foliated manifolds
$\psi_{\alpha} \colon (W_{\alpha},\cF_{W_{\alpha}}) \cong (I^k,TI^k)\times (S_{\alpha},\cF_{S_{\alpha}})  $ where $I=(-1,1)$ and $S_{\alpha}$ is a slice transverse to $L$ at some point $x_{\alpha}$. We  fix a choice of  generators $\{X_i^{\alpha}\}_{1\le i \le n} $  of $\cF_{W_{\alpha}}$  as in Lemma \ref{splitmor}. Denote $L_{\alpha}:=\psi_{\alpha}^{-1}(I^k\times \{x_{\alpha}\})$.

The vector fields $\{X_i^{\alpha}\}_{1\le i \le n}\subset \cF$ induce a frame   $\{\xi_i^{\alpha}\}_{1\le i \le n} $ of $A_L|_{L_{\alpha}}$ via the projection  $\cF_{W_{\alpha}} \to C^{\infty}(L_{\alpha},A_L|_{L_{\alpha}})=\cF_{W_{\alpha}}/I_{L_{\alpha}}\cF_{W_{\alpha}}$. This frame induces an isomorphism of Lie algebroids over $L_{\alpha}$
\begin{equation*}
\nu_{\alpha} \colon A_L|_{L_{\alpha}} \cong TL_{\alpha}\oplus \g_x
\end{equation*}
which maps $\xi_i^{\alpha}\mapsto X_i^{\alpha}|_{L_{\alpha}}$ for $i\le k$ and $\xi_i^{\alpha}
\mapsto [X_i^{\alpha}]\in \g_x$ for $k< i \le n$. A Lie groupoid integrating $TL_{\alpha}\oplus \g_x\to L_{\alpha}$ is $(L_{\alpha}\times L_{\alpha}) \times G_x$, the product of the pair groupoid over $L_{\alpha}$ and the simply connected Lie group integrating the Lie algebra $\g_x$. We will now relate this Lie groupoid   the  path holonomy bi-submersion $U_{\alpha}$ defined by the $\{X_i^{\alpha}\}_{1\le i \le n}\subset \cF$ . 

As in Lemma \ref{splitmor}, assume that $U_{\alpha}$ is of the form $W_{\alpha}\times B^{k} \times B^\ell$ where $B^k,B^{\ell}$
are open neighborhoods of the origin in $\R^k,\R^{\ell}$ respectively. We make use  of $\R^{\ell}\cong \g_x$  given by the basis $\{[X_i^{\alpha}]\}_{k < i \le n}$  of $\g_x$, and we use the splitting theorem to identify $L_{\alpha}$ with $I^k$.
The identification 
\begin{align}\label{ideOm}
L_{\alpha} \times   B^k \times B^{\ell} &\to
\text{neighborhood of the identity section of }(L_{\alpha}\times L_{\alpha}) \times G_x,\\
 (z,\vec{\eta},\vec{\lambda})&\mapsto (z+\vec{\eta},z, exp_{\g_x} \vec{\lambda})\nonumber
\end{align}
makes    $L_{\alpha} \times   B^k \times B^{\ell}$ into a local Lie groupoid, which we denote by $\Omega_{\alpha}$. It has the following properties: $\Omega_{\alpha}$
 integrates  $A_L|_{L_{\alpha}}$ (because $(L_{\alpha}\times L_{\alpha}) \times G_x$ does), and its
  source and target map of $\Omega_{\alpha}$   agree with those of the path holonomy bi-submersion $U_{\alpha}$ by 
eq. \eqref{formtar}.

Consider the restriction $ \Omega_\alpha|_{L_{\alpha\beta}}$   of $\Omega_\alpha$ to $L_{\alpha\beta} := L_{\alpha} \cap L_{\beta}$. 
The Lie algebroid isomorphisms $$\nu_{\beta}\circ \nu_{\alpha}^{-1} \colon (TL_{\alpha}\oplus \g_x)|_{L_{\alpha\beta}}\to (TL_{\beta}\oplus \g_x)|_{L_{\alpha\beta}} $$ integrate to isomorphisms of local Lie groupoids $\mu_{\beta \alpha} \colon \Omega_\alpha|_{L_{\alpha\beta}} \to \Omega_\beta|_{L_{\alpha\beta}}$
 which satisfy $$\mu_{\alpha\alpha}=id, \quad \mu_{\alpha\beta}^{-1} = \mu_{\beta\alpha}, \quad \mu_{\alpha\beta}\circ \mu_{\beta\gamma} = \mu_{\alpha\gamma}.$$
  Now define the equivalence relation $$\Omega_{\alpha} \ni \omega_\alpha \sim \omega_\beta \in \Omega_{\beta} \quad \Longleftrightarrow \quad \omega_{\alpha} \in \Omega_\alpha|_{L_{\alpha\beta}}, \quad \omega_\beta\in \Omega_\beta|_{L_{\alpha\beta}} \quad \text{and} \quad \mu_{\beta \alpha}(\omega_{\alpha}) = \omega_\beta$$ Then the local Lie groupoid $\Omega :=\coprod_{{\alpha}} \Omega_{\alpha} /\sim$ is  a local Lie groupoid integrating $A_L$.  
    
For every $\alpha$, consider  the quotient map $\sharp \colon \Omega_{\alpha} \to H_L$, and recall that $\Omega_{\alpha}$ is a local Lie groupoid whose underlying bi-submersion is $U_{\alpha}$.
Since the $\mu_{\beta \alpha}$ are in particular morphisms of bi-submersions, by the definition of holonomy groupoid 
these quotient maps  assemble to a map $\phi \colon \Omega  \to H_L$. 

We  claim: \emph{$\phi \colon \Omega  \to H_L$ is a morphism of local groupoids}.

To prove the claim it suffices  to show that the quotient map $\sharp \colon \Omega_{\alpha} \to H_L$ is a morphism of local groupoids, for every $\alpha$.
In turn, this is a consequence of the fact that 
the multiplication and inversion of any local Lie groupoid are morphisms of bi-submersions.  More explicitly:  $\sharp \colon \Omega_{\alpha} \to H_L$  preserves multiplications because in the diagram
\begin{equation*}
\xymatrix{
\Omega_\alpha \circ \Omega_\alpha \ar[d]^{\sharp \times \sharp}\ar[dr]^{\sharp|_{\Omega_\alpha \circ \Omega_\alpha}}  \ar[r]^{mult.} & \Omega_\alpha \ar[d]^{\sharp} \\
(H_L) _{\bs}\times_{\bt} (H_L) \ar[r]^{\;\;\;\;\;\;\;\;\;mult.}& {H_L} }
\end{equation*}
the lower triangle commutes by the definition of multiplication on $H$ and the upper triangle commutes since the multiplication on  $\Omega_{\alpha}$ is a  morphism of bi-submersions. This proves the claim.

Notice that $\phi$ maps onto an open neighborhood of $L$ in $H_L$, as a consequence of the fact that  $\sharp U$ is open in $H$ for any path holonomy bi-submersion $U$ (see the beginning of  \S \ref{subsubsection:Htop}).
At every point $x\in L$, by eq. \eqref{tvare},    in a neighborhood of the unit in $G_x$,
the essential isotropy group is identified with $\phi^{-1}(x)$.
\emph{Hence the  essential isotropy group of the leaf $L$
is discrete if{f} the fibers of $ker(\phi)=\cup_{x\in L}\phi^{-1}(x)$  are discrete}.

Assume now that the essential isotropy group
is discrete.  Restricting the extension $$0 \to \ker\phi \to  \Omega \to H_L \to 0$$
to a neighborhood of the set of units $L$ we deduce that a neighborhood of $L$ in $H_L$ is a local Lie groupoid integrating $A_L$.  
Since $\Omega$ is a union of   path holonomy bi-submersions, from Lemma \ref{lem:smooth} and Prop. \ref{prop:allsmooth} we deduce that the   $H_L$ is smooth and is a Lie groupoid.  Conversely, if we assume that $H_L$ is a Lie groupoid integrating $A_L$, then the above extension shows that 
$ker(\phi)$ is discrete.
\end{proof}
   
 \begin{remark}\label{MRui}
Crainic and Fernandes \cite{CrF} determined the exact obstruction for the integrability of the Lie algebroid $A_L$ to a \emph{source simply connected} Lie groupoid $\Gamma$: it is the discreteness (or equivalently, the closeness) of a certain subgroup $\tilde{\cN}_x(A_L)$ of the center of $G_x$, called  \emph{monodromy group}. 

When the essential isotropy group $\ker\varepsilon$ is discrete then the Lie algebroid $A_L$ is integrable by Thm. \ref{ALintegr}, and we have $$\tilde{\cN}_x(A_L)\subset \ker\varepsilon.$$ Indeed, $\tilde{\cN}_x(A_L)$ is defined as the kernel of the morphism $G_x \to \Gamma_x^x$ induced by the inclusion  ${\g_x} \hookrightarrow A_L$, there is a morphism $\Gamma_x^x \to H_x^x$ induced by (the restriction of) $Id_{A_L}$, and
 $\varepsilon$  factors as $G_x \to \Gamma_x^x \to H_x^x$ (since $\varepsilon$  is induced by $Id_{\g_x}$). 
{At this stage} we do not know how  $\ker\varepsilon$ relates to the monodromy group in general. For instance, it is not clear if, in the general case, the discreteness of  $\tilde{\cN}_x(A_L)$ implies the discreteness of $\ker\varepsilon$, for the former controls the integration of $A_L$ to a source simply connected Lie groupoid, whereas the latter controls the integration of $A_L$ to $H_L$. {The general comparison of the two integrability obstructions is beyond the scopes of this work; this might be discussed in a different paper.}

Notice that, whenever  $\ker\varepsilon$ is totally disconnected (for instance, discrete) it automatically  contained in the center of $G_x$. Indeed if $k\in \ker\varepsilon$ and $g\in G_x$, consider a continuous path $t \mapsto g_t$ in $G_x$ from the identity to $g$. The path $t \mapsto g_t k g_t^{-1}$ is contained in $\ker\varepsilon$   and sits at $k$ at time zero, so the fact that   $\ker\varepsilon$ is totally disconnected implies that $g k g^{-1}=k$, i.e. $k$ commutes with $g$.\end{remark}  
   
   
\begin{remark} The  structure of the local Lie groupoid $\Omega_{\alpha}=L_{\alpha} \times   B^k \times B^{\ell} \rightrightarrows W_{\alpha}$ induced by the identification \eqref{ideOm} is the following\footnote{Recall that we use the splitting theorem to identify $W_{\alpha}$ with $I^k$}:
\begin{description}
\item[-]  $\bs(z,\vec{\eta},\vec{\lambda})=z$ and $\bt(z,\vec{\eta},\vec{\lambda})=z +\vec{\eta}$;
\item[-] The unit map is the embedding of $W_\alpha$;
\item[-] The local product is given by $$(z_1,\vec{\eta_1},\vec{\lambda_1})\cdot (z_2,\vec{\eta_2},\vec{\lambda_2}) = (z_2,\vec{\eta_1}+\vec{\eta_2},BCH(\vec{\lambda_1},\vec{\lambda_2}))$$ where $BCH(\vec{\lambda_1},\vec{\lambda_2})$ stands for the Baker-Campbell-Hausdorff product of $\vec{\lambda}_i \in B^\ell\subset \g_x$;
\item[-] The inversion map is $$\iota : g=(z,\vec{\eta},\vec{\lambda}) \mapsto (z+\vec{\eta},-\vec{\eta},-\vec{\lambda}).$$

\end{description}
\end{remark}

\begin{ep}  Let $X$ be a complete vector field on $M$ and $\cF=\langle X \rangle$. As in Ex. \ref{eps:linact} (ii) assume that, for all $x\in \partial \{X=0\}$, every neighborhood of $x$ contains at least one point   the integral curve through which is not periodic. Fix such a point $x$ and denote $L=\{x\}$.  Then  $\varepsilon \colon G_x \to H_x^x$ 
is a surjective group morphism $(\R,+)\to (\R,+)$, i.e. a non-zero multiple of $Id_{(\R,+)}$, hence $ker(\varepsilon)=\{0\}$
Clearly $H_L=(\R,+)$ integrates the Lie algebroid $A_L=\g_x=\R$, as predicted by Thm. \ref{ALintegr}.
\end{ep}

\begin{ep} Let $V$ be a finite dimensional vector space and  $G\subset G(V)$ a closed connected subgroup. Consider the singular foliation given by the action of $G$ on $V$. Let $L=\{x\}$ for $x=0$. Then  $\varepsilon \colon G_x \to H_x^x=G$ is a covering map by Cor. \ref{ex:lin}, where $G_x$ denotes the simply connected cover of $G$. Hence $ker(\varepsilon)$ is discrete.
Clearly $H_L=H_x^x=G$ integrates the Lie algebroid $A_L=\g_x=\g$, as predicted by Thm. \ref{ALintegr}. 
\end{ep}

\begin{ep}
Consider $M=\T^2\times \R^2$, with coordinates $(\theta_1,\theta_2,t_1,t_2)$, endowed  with the singular foliation $\cF$ spanned by 
\begin{align*}
&v_1=\p_{\theta_1}+t_1t_2\p_{t_1}\;\;\;\;\;\;  w_1= t_1^2t_2\p_{t_1}  \\ 
&v_2=\p_{\theta_2}+t_1t_2\p_{t_2}\;\;\;\;\;\;  w_2= t_1t_2^2\p_{t_2}.  
\end{align*}
$\cF$ is involutive, as $$[v_1,v_2]=w_2-w_1,\;\;\; [v_1,w_1]=t_2w_1,\;\;\; 
[v_1,w_2]=t_2(w_2-w_1),\;\;\;  [w_1,w_2]=t_1t_2(w_2-w_1).$$
(The remaining brackets are deduced easily from the symmetry that relates $v_1$ to $v_2$ and $w_1$ to $w_2$.)

For $L=\T^2\times\{0\}$ the Lie algebroid $A_L$, as a vector bundle, is the trivial rank 4 vector bundle. The images of the above vector fields under the quotient map $\cF\to \cF/I_L\cF=C^{\infty}(L;A_L)$ form a trivializing frame of sections $\underline{v_1},\underline{v_2},
\underline{w_1},\underline{w_2}$ of $A_L$. The Lie algebroid structure of $A_L$ in terms of this frame is as follows: the anchor maps $\underline{v_i}\mapsto \p_{\theta_i}$  and $\underline{w_i}\mapsto 0$  , while the only non-zero bracket is $[\underline{v_1},\underline{v_2}]_{A_L}=\underline{w_2}-\underline{w_1}$. 

The Lie algebroid $A_L$ is integrable, as every transitive Lie algebroid over a base with trivial second homotopy group $\pi_2$ \cite[Cor. 5.5]{CrF}. Explicitly, denote by $\mathfrak{k}$ the 4-dimensional Lie algebra whose basis we denote (abusing slightly   notation) by $\underline{v_1},\underline{v_2},
\underline{w_1},\underline{w_2}$, satisfying the above bracket relations. 
$A_L$ is the transformation algebroid for the action of  $\mathfrak{k}$ on $L$ given by the above anchor map, so a Lie groupoid integrating $A_L$ is the transformation groupoid $K \ltimes L \rightrightarrows L$ where $K$ is the simply connected Lie group integrating $\mathfrak{k}$. Notice that $K$ is the product of 
$\left\{\left(\begin{smallmatrix}  1 & a & b\\
0 &1 & c\\
0&0& 1 \end{smallmatrix}\right): a,b,c \in \R \right\}$ with $(\R,+)$,
since  $\mathfrak{k}$ is the direct sum of the Heisenberg Lie algebra with $\R$.
 \end{ep}

\begin{ep}
Let $L$ be a manifold and $\alpha\in \Omega^1(L)$ a closed 1-form. On
$M:=L\times \R$ consider the singular foliation spanned by $$w:=t^2\p_{t} \;\;\text{   and   }\;\;X^{\alpha}:=X+\alpha(X)t\p_{t},$$ as $X$ ranges over all vector fields of $L$ (extended trivially to vector fields on $L\times \R$). 
 Here $t$ denotes the coordinate on $\R$.
The singular foliation is involutive: $[X^{\alpha},w]=\alpha(X)w$, and $[X^{\alpha},Y^{\alpha}]=[X,Y]^{\alpha}$ since $\alpha$ is closed.
 
The Lie algebroid $A_L$ is isomorphic as a vector bundle to $TL\times \R \to L$ with the obvious anchor and bracket given by $[(X,0),(Y,0)]_{A_L}=([X,Y],0)$ and 
$[(X,0),(0,1)]_{A_L}=(0,\alpha(X))$. This Lie algebroid is integrable. Indeed, the formula $\nabla_X 1=\alpha(X)$ determines a flat connection on the trivial $\R$-bundle over $L$, that is, a representation of $TL$ on the trivial $\R$-bundle. The above Lie algebroid is precisely the transformation algebroid of this representation, and as such it is integrable (by the transformation groupoid induced by the holonomy of $\nabla$).
\end{ep}

 \subsection{A criterion for the discreteness of the essential isotropy group}\label{subsection:secordvf}

This section provides a condition for the discreteness of $ker(\varepsilon)$. 

\begin{thm}\label{thm:secord}  Let $S_x$ be a slice to the leaf $L_x$ at $x$. 
Assume that the following condition is satisfied: for any smooth time-dependent vector field  $\{X_t\}_{t\in [0,1]}$ in $I_x\cF_{S_x}$, there exists a vector field $Z\in I_x\cF_{S_x}$ and 
a neighborhood $S'$ of $x$ in $S_x$ such that $exp(Z)|_{S'}=\phi|_{S'}$, where $\phi$ denotes the time-1 flow of $\{X_t\}_{t\in [0,1]}$.

Then the essential isotropy group of $L_x$ (Def. \ref{dfn:essisotr}) is discrete.  
\end{thm}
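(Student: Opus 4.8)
The plan is to reduce everything to the transversal and then to show that the essential isotropy group meets a neighbourhood of the identity only in the identity; since it is a subgroup of the Lie group $G_x$, this is exactly discreteness. First I would use Remark \ref{isogx} (giving $\g_x\cong(\cF_{S_x})_x$) together with Example \ref{eps:linact}(i) (giving $H(M,\cF)^x_x=H(S_x,\cF_{S_x})^x_x$) to replace $(M,\cF)$ by $(S_x,\cF_{S_x})$, so that $L_x$ becomes the one-point leaf $\{x\}$. In this situation every generator of $\cF_{S_x}$ vanishes at $x$, we have $\g_x=\cF_{S_x}/I_x\cF_{S_x}$, and I fix generators $X_1,\dots,X_m$ whose classes span $\g_x$, together with the associated path-holonomy bi-submersion $U$ and the map $\Delta$ of Prop.~\ref{localgp}.

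Now let $g=\exp_\g(v)\in\ker\varepsilon$ with $v=[X]$, $X=\sum_i k_iX_i$. By Lemma \ref{lem:ker} there is a bisection through $\Delta(g)$ carrying the identity, i.e.\ a map $\phi$ with $\phi(x)=(k_1,\dots,k_m)$ and $\exp_y(\sum_i\phi_i(y)X_i)=y$ for $y$ near $x$. The key construction is the \emph{loop} of local diffeomorphisms $\Theta_t(y):=\exp_y(t\sum_i\phi_i(y)X_i)$, which fixes $x$ and satisfies $\Theta_0=\Theta_1=\mathrm{id}$ (the latter by the displayed equation). Its generator $W_t=\tfrac{d}{dt}\Theta_t\circ\Theta_t^{-1}=\sum_i(\phi_i\circ\Theta_t^{-1})X_i$ lies in $\cF_{S_x}$, and since $\Theta_t^{-1}(x)=x$ and $\phi_i(x)=k_i$ it differs from $X$ by $R_t:=W_t-X\in I_x\cF_{S_x}$. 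Correcting by the autonomous flow, set $\Xi_t:=\exp(-tX)\circ\Theta_t$; because a flow preserves its own generator, $(\exp(-tX))_*X=X$, so the generator of $\Xi_t$ equals $(\exp(-tX))_*R_t$, which again lies in $I_x\cF_{S_x}$ (push-forward by a foliation-preserving diffeomorphism fixing $x$ preserves $I_x\cF_{S_x}$). As $\Xi_1=\exp(-X)$, this exhibits $\exp(-X)$ as the time-$1$ flow of a time-dependent vector field in $I_x\cF_{S_x}$.

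At this point I would invoke the hypothesis: it produces $Z\in I_x\cF_{S_x}$ with $\exp(Z)=\exp(-X)$ near $x$, i.e.\ $\exp(X)=\exp(Z')$ with $Z':=-Z\in I_x\cF_{S_x}$. To conclude $v=0$ for $v$ small I would compare $\infty$-jets at $x$. The linearization of a time-$1$ flow at a fixed point is the exponential of the linearization of the field, so $\exp(L(v))=\mathrm{id}$, where $L(v)\in\gl(T_xS_x)$ is the linear part of $X$; for $v$ small this forces $L(v)=0$. Hence both $X$ and $Z'$ vanish to second order at $x$, and since the time-$1$ flow is a bijection on the pro-unipotent group of formal diffeomorphisms tangent to the identity, the equality $\exp(X)=\exp(Z')$ upgrades to $j^\infty_x X=j^\infty_x Z'$. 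Thus $X-Z'$ is flat at $x$; as $\cF_{S_x}$ is finitely generated, a flat element lies in $I_x\cF_{S_x}$, whence $v=[X]=[Z']=0$. Therefore $\ker\varepsilon$ meets a neighbourhood of $e$ only in $e$, and being a subgroup it is discrete.

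The main obstacle is the endgame. Making the generator of $\Xi_t$ genuinely land in $I_x\cF_{S_x}$ (rather than merely vanish at $x$) is what renders the hypothesis applicable, and it is precisely why one must subtract the autonomous flow $\exp(-tX)$ instead of working with the loop $\Theta_t$ directly. The subtler point is passing from the single equality $\exp(X)=\exp(Z')$ to $v=0$: equal time-$1$ flows need not have equal generators, so one is forced to pass to formal jets, and the step that genuinely relies on the finite generation of the foliation — that flat elements of $\cF_{S_x}$ lie in $I_x\cF_{S_x}$ — is the one that would need the most care.
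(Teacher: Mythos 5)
Your overall route is the same as the paper's: reduce to the transversal, use Lemma \ref{lem:ker} to extract a bisection carrying the identity, exhibit a comparison diffeomorphism as the time-$1$ flow of a time-dependent vector field in $I_x\cF_{S_x}$ so that the hypothesis applies, and kill the linear part by local injectivity of the matrix exponential (your ``$v$ small'' plays the role of the paper's neighborhood $D$). Your explicit construction of $\Xi_t=\exp(-tX)\circ\Theta_t$ is a self-contained substitute for the step the paper delegates to \cite{AZ2}, and that part is sound.

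The endgame, however, has a genuine gap. The step ``as $\cF_{S_x}$ is finitely generated, a flat element lies in $I_x\cF_{S_x}$'' is false, and the paper itself contains the counterexample: in Example \ref{eps:1.1}(\ref{1.1firstitem})(\ref{eps:1.1(vii)}), $\cF=\langle f\partial_x\rangle$ with $f$ flat at $0$, the generator $f\partial_x$ is flat at $0$ yet represents a \emph{nonzero} class in $\g_0=\cF(0)/I_0\cF\cong\R$ (if $f\partial_x=g\,f\partial_x$ with $g(0)=0$, then $g\equiv 1$ wherever $f\neq 0$, contradicting continuity). Membership in $I_x\cF_{S_x}$ is not a condition on $\infty$-jets, so your formal-exponential argument, which can only conclude $j^\infty_x X=j^\infty_x Z'$, cannot yield $[X]=[Z']$ in $\g_x$; in precisely the foliations where the hypothesis of the theorem is most relevant (generators flat at $x$), the jet comparison gives no information at all. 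The paper avoids this by staying at the level of actual vector fields: from $Id=\exp(Y)\exp(-Z)=\exp(BCH(Y,Z))$ the flow of $BCH(Y,Z)=Y+Z'$ is $1$-periodic, hence defines a circle action fixing $x$ with trivial linearization there; Bochner's linearization theorem then forces the action, and hence the generating vector field $Y+Z'$, to vanish \emph{identically}, giving the exact identity $Y=-Z'\in I_x\cF_{S_x}$ and therefore $[Y]=0$. You need some such argument (Bochner, or another mechanism producing an equality of vector fields rather than of jets) to close your proof.
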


\begin{remark}\label{issueds}
$I_x\cF_{S_x}$ is a Lie subalgebra of the infinite dimensional Lie algebra $\vX_c(S_x)$, and it is well-known that 
the exponential map (the time-1 flow) $\vX_c(S_x) \to \text{Diff}(S_x)$
is \emph{not} locally surjective \cite[Ex.6.6]{Milnor} \cite[Ex. 43.2]{KrMi97}.
The above condition amounts to a local surjectivity statement for the restriction of the exponential map  to $I_x\cF_{S_x}$ and to a subgroup of $\text{Diff}(M)$, where one is  allowed to shrink $S_x$ to a small neighborhood  of $x$. We do now know how restrictive the above condition is. 
\end{remark}

\begin{proof}
Let $\{X_i\}_{i\le n}$ be generators of $\cF$ in a neighborhood of $x$, chosen compatibly with a splitting $W\cong I^k\times S_x$ as in Lemma \ref{splitmor} {(so $X_{k+1},\dots,X_n$ are tangent to $S_x$)}.
Denote by $U \subset W \times \R^n$ the corresponding path holonomy bi-submersion.
We have $U_x^x=\{x\}\times \{0\} \times \R^\ell$ by eq. \eqref{formtar}.
Take $D\subset \R^\ell$ to be a neighborhood of the origin such that 
the matrix exponential $\mathfrak{gl}(T_xS_x)\to GL(T_xS_x)$ is injective on $\{\sum_{i> k}\gamma_i X_i^{lin}: \gamma \in D\}$, where $X_i^{lin}\in \vX_{lin}(T_xS_x)\cong \mathfrak{gl}(T_xS_x)$ denotes the linearization of $X_i$ at $x$.
We show that $\ker(\varepsilon)\cap {\Delta^{-1}(\{x\}\times \{0\}\times D)}$ consists of just one point, namely the identity element of $G_x$, see  eq. \eqref{tvare}.

Let $u:=(x,0,\lambda^0)\in U_x^x$ with $\lambda^0\in D$. Assume that $\Delta^{-1}(u)\in \ker(\varepsilon)$.  By Lemma \ref{lem:ker} this is equivalent to the existence of
  a bisection $s$ of $U$ through $u$ (say given by $z\mapsto (z, \eta(z),\lambda(z))$) carrying the identity. Notice that  in particular $\lambda(x)=\lambda^0$. We want to show that $\lambda^0=0$.

The diffeomorphism carried by  $s$ is (essentially) given 
by eq. \eqref{formtar}, so that
we necessarily have $\eta(z)=0$ for all $z$. From now on we consider only $U|_{S_x}$. Restricting the identity diffeomorphism to the slice $S_x$ we obtain that
$$S_x\to S_x, \quad z \mapsto exp_z(\sum_{i> k}\lambda_i(z) X_i)$$  is $Id_{S_x}$. Now consider the vector field $$Y:=\sum_{i> k}\lambda^0_i X_i|_{S_x}$$ on $S_x$. Denote by $\phi=exp(Y)$ its flow, which is also the diffeomorphism carried by the \emph{constant bisection} $z \mapsto (z,0,\lambda^0)$ of $U|_{S_x}$. We apply a theorem of \cite{AZ2}
 to the foliation $(S_x,\cF_{S_x})$ (of which $\{x\}$ is a singular leaf)   at $u\in U_x^x$ in order to compare the diffeomorphisms carried by the bisection $s|_{S_x}$ (which is $Id_{S_x}$) and the constant bisection $\lambda^0$, and we deduce that  $\phi$ is the time-1 flow of a \emph{time-dependent} vector field $\{X_t\}_{t\in [0,1]}$ in $I_x \cF_{S_x}$.
%
The condition we assumed    implies that there is $Z\in I_x \cF_{S_x}$  such that $exp(Z)=\phi=exp(Y)$, {shrinking $S_x$ if necessary.}
Hence \begin{equation}\label{YX0}
Id_{S_x}=exp(Y)exp(-Z)=exp(BCH(Y,Z)). 
\end{equation}
Now consider the vector field $BCH(Y,Z)$. 
Notice that $BCH(Y,Z)=Y+Z'$ for some $Z'\in I_x \cF_{S_x}$.
The flow $\varphi^t:=exp(t(Y+Z'))$ fixes $x$.
The flow of the linearization $Y^{lin}=(Y+Z')^{lin}$ is the linearization of the flow of $Y+Z'$, which is $d_x\varphi^1=d_xId_M=Id_{T_xM}$. Since the flow of linear vector fields on $T_xS_x$ is given by the matrix exponential $\mathfrak{gl}(T_xS_x)\to GL(T_xS_x)$ and  $\lambda^0\in D$, the injectivity condition on $D$ implies that $(Y+Z')^{lin}=0$. Hence its flow $d_x \varphi^t$ is the identity for any $t$. Bochner's linearization theorem  \cite[Thm. 2.2.1]{DK} implies that the circle\footnote{We have an $S^1$ action because of eq. \eqref{YX0}.} actions on $S_x$ (by $t \to \varphi^t$) and on $T_xS_x$ (by $t\mapsto d_x \varphi^t=Id$) are equivalent, hence we conclude that $\varphi^t=Id_M$ for all $t$, that is, that $Y+Z'=0$. 
Hence $Y=-Z'\in I_x \cF_{S_x}$. Taking the image of $Y$ under $\cF(x)\to \g_x$ we obtain $0=[-Z']=[Y]=\sum_{i> k}\lambda^0_i [X_i]$, and since the $\{[X_i]\}_{i> k}$ for a basis of $\g_x$ we conclude $\lambda^0=0$.
\end{proof}

\section{Outlook: Holonomy transformations and the normal form of a singular foliation}\label{section:outlook}

In the present note we considered smoothness statements about the holonomy groupoid $H$. This, together with an analysis of the notion of holonomy for singular foliations, allows to construct a model for the singular foliation near a leaf, leading to the question of when the singular foliation is isomorphic to the local model. We provide a brief outline of this thoughts, which will be exposed thoroughly in a subsequent paper \cite{AZ2}.

 Let us start by recalling the following well-known facts about regular foliations:
\begin{itemize}
\item Geometrically, the holonomy of a co-dimension $q$ (regular) foliation $(M,F)$   at a point $x\in M$ is realized by a map $h \colon \pi_1(L_x) \to GermDiff(S)$, where $L_x$ is the leaf at $x$, $S$ is a transversal at $x$, and $GermDiff(S)$ is the space of germs of local diffeomorphisms of $S$. Its linearisation $Lin(h) \colon  \pi_1(L_x) \to GL(q)$ gives rise to a representation of the holonomy groupoid on the normal bundle $(TM/F)|_{L_x}$.

\item The (local) Reeb stability theorem (see \cite[Thm 2.9]{MM}) states that around a compact leaf $L$ with finite holonomy, the manifold is diffeomorphic to the quotient $\frac{\widetilde{L} \times \R^{q}}{\pi_1(L)}$, where $\widetilde{L}$ is the universal cover of the leaf, and $\pi_1(L)$ acts on $\widetilde{L} \times \R^{q}$ diagonally by deck transformations and linearized holonomy.
\end{itemize}
One encounters serious problems trying to define holonomy for singular foliations naively in terms of paths. Rather than using paths, we threat the notion of holonomy as follows. At the end we discuss the analog of the Reeb stability theorem.
\begin{itemize}
\item For $x, y$ in the same leaf consider transversals $S_x, S_y$ of the leaf at $x$ and $y$ respectively. There is a map $$\Phi_x^y : H(M,\cF)_x^y \to \frac{GermAut_{\cF}(S_x;S_y)}{exp(I_x\cF)|_{S_x}},$$ whose target is the space of germs of foliation-preserving local diffeomorphisms between $S_x$ and $S_y$, quotiented by the {group generated by} exponentials of elements in the maximal ideal $I_x\cF$.  The maps $\Phi_x^y$ assemble to a morphism of groupoids. We refer to elements of the target as ``\textit{holonomy transformations from $x$ to $y$}''.

\item The map $\Phi$ linearises to a morphism of groupoids $Lin(\Phi) \colon H(M,\cF) \to Iso(N)$, whose target is the groupoid of isomorphisms between the fibres of the (singular) normal bundle $N$ {to the leaves}  (linear holonomy). This map can be interpreted as the adjoint representation.

\item We can make sense of the quotient
\begin{align}\label{locm}
\frac{H_x\times N_xL}{H_x^x}.
\end{align} 
Namely, $H_x^x$ acts on  $N_xL$ by {$Lin(\Phi)$}. Our Thm. \ref{HL} ensures that 
$H_x$ and $H_x^x$ are both smooth, provided  the essential isotropy group of $L$ is closed, so that the quotient \eqref{locm} is a smooth manifold if $H_x^x$ is compact. Endowing $H_x$ with the foliation having just one leaf and $N_xL$ with the ($H_x^x$-invariant) singular foliation obtained linearizing $\cF$ at $x$, the above space  is further endowed with a singular foliation. 

The generalization of Reeb's stability theorem to singular foliations answers the question: When $L$ is compact and $H_x^x$ is a compact Lie group, under which other conditions (if any) is there a diffeomorphism of foliated manifolds between a neighborhood of $L$ and  the local model \eqref{locm}?

\end{itemize}

\end{document}